\numberwithin{equation}{section}
 \newcounter{extralabel}[section]
 \newtheorem{ittheorem}{Theorem}
 \newtheorem{itlemma}{Lemma}
 \newtheorem{itproposition}{Proposition}
 \newtheorem{itdefinition}{Definition}
 \newtheorem{itcorollary}{Corollary}
 \newtheorem{itconjecture}{Conjecture}
 \newtheorem{itremark}{Remark}
 \newtheorem{itassumption}{Assumption}
 \newenvironment{theorem}{\addtocounter{extralabel}{1}
 \begin{ittheorem}}{\end{ittheorem}}
 \newenvironment{lemma}{\addtocounter{extralabel}{1}
 \begin{itlemma}}{\end{itlemma}}
 \newenvironment{proposition}{\addtocounter{extralabel}{1}
 \begin{itproposition}}{\end{itproposition}}
 \newenvironment{corollary}{\addtocounter{extralabel}{1}
 \begin{itcorollary}}{\end{itcorollary}}
 \newenvironment{remark}{\addtocounter{extralabel}{1}
 \begin{itremark}}{\end{itremark}}
 \newenvironment{assumption}{\addtocounter{extralabel}{1}
 \begin{itassumption}}{\end{itassumption}}
\newcommand{\prob}{\mathbb{P}}
\newcommand{\Ex}{\mathbb{E}}
\newcommand{\Rl}{\mathbb{R}}
\newcommand{\dom}{\mbox{dom}\,}
\newcommand{\rew}{\mathcal{X}}
\newcommand{\Brew}{\mathcal{B}(\mathcal{X})}
\newcommand{\pae}{\prob\mbox{-a.e.\ }\omega}
\newcommand{\dd}{\mathrm{d}}
\newcommand{\ee}{\mathrm{e}}
\newcommand{\N}{\mathbb{N}}
\newcommand{\bdot}{\boldsymbol{\cdot}}
\newcommand{\St}{\mathcal{C}}
\newcommand{\Sto}{\mathcal{C}\setminus\{c\}}
\newcommand{\FF}{\mathsf{F}}
\DeclareMathOperator*{\infd}{inf\vphantom{p}}
\DeclareMathOperator*{\esssup}{ess\,sup}
\newtheoremstyle{mystyle}
  {}
  {}
  {\itshape}
  {}
  {\bfseries}
  {.}
  { }
  {\thmname{#1}\thmnumber{ #2}\thmnote{ (#3)}}
\title{Quenched large deviations in renewal theory}
\author{
Frank den Hollander\footnote{Mathematisch Instituut, Universiteit Leiden, Niels Bohrweg 1, 2333 CA Leiden, The Netherlands,\\
\texttt{denholla@math.leidenuniv.nl}} 
\quad
Marco Zamparo\footnote{Dipartimento di Fisica, Universit\`a degli Studi di Bari e Istituto Nazionale di Fisica Nucleare,
  Sezione di Bari, via Amendola 173, 70126 Bari, Italy,\\
\texttt{marco.zamparo@uniba.it}}
}
\date{\today}
\begin{document}  

\maketitle                              

\begin{abstract}
  In this paper we introduce and study renewal-reward processes in random environments where each renewal involves a reward taking values in a Banach space. We derive quenched large deviation principles and identify the associated rate functions in terms of variational formulas involving correctors. We illustrate the theory with three examples: compound Poisson processes in random environments, pinning of polymers at interfaces with disorder, and returns of Markov chains in dynamic random environments. 

\vskip 0.5truecm
\noindent
{\it MSC} 2020 {\it subject classifications.} 
60F10; 
60K05; 
60K10; 
60K37; 
82D60. 
\\
{\it Key words and phrases.} Random environments, Renewal-reward processes, Quenched large deviations, Rate functions, Compound Poisson processes, Pinned polymers, Returns of Markov chains.
\\
{\it Acknowledgment.} FdH was supported by NWO Gravitation Grant 024.002.003-NETWORKS. MZ was supported by Apulia Region via the
project UNIBA044/REFIN - Research for Innovation.
\end{abstract}


\small
\tableofcontents
\normalsize


\section{Introduction}


\subsection{Background}

\emph{Random walks in random environments} \cite{zeitouni2006} have been in the focus of attention since the 1970's, exhibiting rich behaviour that is associated with slow-down phenomena and anomalous large deviations \cite{greven1994,dembo1996,gantert1998,zerner1998,pisztora1999_1,pisztora1999_2,comets2000,varadhan2003,dembo2004,rassoulagha2004,rosenbluth2006,peterson2009,yilmaz2009_1,yilmaz2010_1,yilmaz2010_2,rassoulagha2011,yilmaz2011_1,yilmaz2011_2,peterson2014,berger2018,bazaes2023_1,bazaes2023_2}. Large deviations have also been investigated for \emph{random polymers} and \emph{random walks in random potentials} \cite{rassoulagha2013,rassoulagha2014,georgiou2016,cometsbook,rassoulagha2017_1}, and more recently for \emph{random walks in dynamic random environments} \cite{yilmaz2009_2,balazs2019,rassoulagha2017_2,avena2010,avena2018}. Surprisingly, in spite of the great theoretical importance and the wide applicability of renewal processes, much less attention has been paid to \emph{renewal processes in random environments}. To the best of our knowledge, the only attempt in this direction was made in \cite{baxter1994}, in which the waiting times depend on a latent stochastic process and some of the standard limit theorems of renewal theory are generalised, including the renewal theorem and Blackwell's theorem. Viewed from a different perspective, renewal processes in random environments appear in the study of random polymers pinned at an interface with disorder \cite{giacominbook,denhollanderbook_2}, and of large deviations for words cut out from a random letter sequence \cite{birkner2010,denhollander2014}.

The purpose of the present paper is to develop \emph{large deviation principles} (LDPs) for renewal processes in random environments. Specifically, we assume that each renewal involves a reward taking a value in a real separable Banach space, and we characterise the quenched fluctuations of the total reward over time, i.e., fluctuations conditional on a typical realisation of the environment. In the absence of disorder, the total reward over time defines a so-called \textit{renewal-reward process} \cite{mitovbook}, so that in fact we deal with renewal-reward processes in random environments.  Our approach exploits subadditivity properties of renewal models, which in the absence of disorder lead to LDPs for the total reward under optimal hypotheses \cite{zamparo2021,zamparo2023}.

The problem we address fits naturally into the context of large deviations for random walks in random environments and random potentials, but it differs from the mainstream literature in two respects. First, the renewal processes we consider allow for renewal times with possibly unbounded and heavy-tailed increments, in contrast to the random walks in random environments and random potentials with bounded increments that are typically considered in the literature. Second, motivated by queuing theory \cite{asmussenbook}, insurance and finance \cite{rolskibook}, and statistical mechanics \cite{zamparo2019}, which consider rewards of various types, we investigate large deviations for random variables that span vector spaces, rather than process-level large deviations. LDPs for the total reward cannot be deduced from process-level LDPs, except for special cases where a contraction principle \cite{dembobook,denhollanderbook_1} can be employed and exponential moments are finite \cite{schied1998}.

In order to identify rate functions, we consider a class of models where the rate function in the quenched LDP can be related via convex conjugation to a cumulant generating function, which itself is traced back to a quenched free energy. Since the latter in general is known only as a subadditive limit, we also provide a variational formula for the quenched free energy, which involves correctors (gradients of sorts) and is deduced as a variational solution for the growth rate of a renewal equation in a random environment. This variational formula applies, in particular, to the free energy of the random pinning model and appears to be new in that context. Similar variational formulas have been obtained previously for random walks in random environments and random potentials \cite{rosenbluth2006,yilmaz2009_1,rassoulagha2013,georgiou2016,cometsbook,rassoulagha2017_1}.

The paper is organised as follows. Section \ref{sec:basicingredients} introduces the class of models considered, identified as generalised pinning models with general rewards, as well as some basic tools needed for the study of their large deviations, including Kingman's subadditive ergodic theorem. Section \ref{sec:mainresults} states our main assumptions and formulates our main results in the form of LDPs. Section \ref{sec:discussion} offers a brief discussion, while Section \ref{sec:examples} describes three examples to which the main results can be applied. Proofs are given in Sections \ref{proofs_constrainedmodel} and \ref{sec:freemodels}.


\subsection{Basic ingredients}
\label{sec:basicingredients}


\paragraph{Random environment.}

The random environment, also called \textit{disorder}, is sampled from a probability space $(\Omega,\mathcal{F},\prob)$ endowed with an ergodic measure-preserving transformation $f$. Expectation under $\prob$ is denoted by $\Ex$.


\paragraph{Renewals and rewards.}

Put $\N:=\{1,2,\ldots\}$, $\N_0:=\{0\}\cup\N$, and $\overline{\N}:=\N\cup\{\infty\}$. Let $p_\omega$ be a probability mass function on $\overline{\N}$ parametrised by $\omega\in\Omega$ in such a way that $\omega\mapsto p_\omega(s)$ is a measurable function for all $s$. In addition, let $(\rew,\|{\bdot}\|)$ be a real separable Banach space equipped with the Borel $\sigma$-field $\Brew$ and, for $s \in \overline{\N}$, let $\lambda_\omega(\bdot|s)$ be a probability measure on $\Brew$ parametrised by $\omega$ in such a way that $\omega \mapsto \lambda_\omega(A|s)$ is measurable for all $A\in\Brew$. We call $p_\omega$ the \textit{waiting-time distribution} and $\lambda_\omega(\bdot|s)$ the \textit{reward probability measure}. In fact, for a given disorder $\omega$, we consider a sequence $S_1,S_2,\ldots$ of \textit{waiting times} taking values in $\overline{\N}$ and a sequence $X_1,X_2,\ldots$ of \textit{rewards} taking values in $\rew$ whose joint law $P_\omega$ satisfies
\begin{equation*}
P_\omega\Big[S_1=s_1,\ldots,S_n=s_n,\,X_1\in A_1,\ldots,X_n=A_n\Big]
= \prod_{i=1}^n p_{f^{t_{i-1}}\omega}(s_i)\,\lambda_{f^{t_{i-1}}\omega}(A_i|s_i)
\end{equation*}
for $n\in \N$, $s_1,\ldots,s_n \in \overline{\N}$, and $A_1,\ldots,A_n \in \Brew$, where $t_0:=0$ and $t_i:=s_1+\cdots+s_i$ for $i \in \N$. This law describes some phenomenon that occurs at the \textit{renewal times} $T_i:=S_1+\cdots+S_i$, with $T_0:=0$, while an initial environment $\omega$ evolves over time by successive applications of the transformation $f$. The $i$th renewal involves the reward $X_i$, which can depend on $S_i$.  Expectation under $P_\omega$ is denoted by $E_\omega$.

The number of renewals up to time $t\in\N_0$ is the largest non-negative integer $N_t$ such that $T_{N_t}\le t$, and the \textit{total reward} up to time $t$ is
\begin{equation*}
W_t := \sum_{i=1}^{N_t} X_i
\end{equation*}
(empty sums are equal to zero). The process $\{W_t\}_{t\in\N_0}$ is a \textit{renewal-reward process in a random environment}, which turns out to be the classical renewal-reward process in the absence of disorder \cite{mitovbook}. We emphasise that $W_t$ is a random variable with respect to any probability space associated with the law of waiting times and rewards, because $\rew$ is separable \cite{ledouxbook}.


\paragraph{Pinning models.}

Let $v_\omega$ be a real function on $\overline{\N}$, which we call the \textit{potential}, parametrised by $\omega\in\Omega$ in such a way
that $\omega\mapsto v_\omega(s)$ is measurable for all $s$. For $t\in\N_0$, we define the \textit{pinning model} as the law $P_{\omega,t}$ determined by the Gibbs change of measure
\begin{equation*}
\frac{\dd P_{\omega,t}}{\dd P_\omega} := \frac{\ee^{H_{\omega,t}}}{E_\omega[\ee^{H_{\omega,t}}]},
\end{equation*}
where $H_{\omega,t} := \sum_{i=1}^{N_t}v_{f^{T_{i-1}}\omega}(S_i)$ is the \textit{Hamiltonian}. This definition generalises the standard pinning model \cite{giacominbook,denhollanderbook_2}, which will be recalled in Section \ref{sec:examples}, and contains as the special case $v_\omega=0$ the above renewal-reward system in a random environment.

Denote by $\mathcal{T}$ the random set $\{T_i\}_{i\in\N_0}$. Together with the pinning model, we consider the {\it constrained pinning model} corresponding to the law $Q_{\omega,t}$ obtained via the change of measure
\begin{equation*}
\frac{\dd Q_{\omega,t}}{\dd P_\omega} := \frac{\mathds{1}_{\{t\in\mathcal{T}\}}
\ee^{H_{\omega,t}}}{E_\omega[\mathds{1}_{\{t\in\mathcal{T}\}}\ee^{H_{\omega,t}}]}.
\end{equation*}
The constrained pinning model turns out to be a useful tool to investigate the pinning model. We generically refer to $P_{\omega,t}$ or $Q_{\omega,t}$ as pinning models.


\paragraph{Conditional independence.}

We aim to analyse the quenched large deviations of the renewal-reward process $\{W_t\}_{t\in\N_0}$, taking advantage of the fact that a renewal process at every renewal starts afresh in the current environment. This fact is formalised by the identity
\begin{align}
\nonumber
P_\omega&\Big[S_1=s_1,\ldots,S_{n+n'}=s_{n+n'},\,X_1\in A_1,\ldots,X_{n+n'}\in A_{n+n'}\,\Big|\,T_n=t\Big]\\
\nonumber
&=  P_\omega\Big[S_1=s_1,\ldots,S_n=s_n,\,X_1\in A_1,\ldots,X_n\in A_n\,\Big|\,T_n=t\Big]\\
&\qquad P_{f^t\omega}\Big[S_1=s_1,\ldots,S_{n'}=s_{n'},\,X_1\in A_1,\ldots,X_{n'}\in A_{n'}\Big]
\label{start_0}
\end{align}
for $n,n'\in \N$, $s_1,\ldots,s_{n+n'} \in \overline{\N}$, and $A_1,\ldots,A_{n+n'} \in \Brew$ (with the convention $\frac{0}{0}:=0$).


\paragraph{Subadditive ergodic theorem.}
 
To describe the asymptotic behaviour of probabilities, we use Kingman's subadditive ergodic theorem \cite{vianabook}:
\begin{itemize}
\item[$(\star)$] 
Let $\{F_t\}_{t \in \N}$ be a sequence of measurable functions on $\Omega$ such that $\Ex[\max\{0,F_1\}] < +\infty$ and $F_{t+t'} \le F_t + F_{t'}\circ f^t$ for all $t,t'\in \N$. Set $\mathcal{L} := \inf_{t \in \N}\{\Ex[\frac{F_t}{t}]\} < +\infty$. The following hold:\\ 
$(i)$ $\lim_{t\uparrow\infty} \Ex[\frac{F_t}{t}] = \mathcal{L}$.\\
$(ii)$ $\lim_{t\uparrow\infty} \frac{F_t(\omega)}{t} = \mathcal{L}$ $\pae$.
\end{itemize}
Note that the condition $\Ex[\max\{0,F_1\}] < +\infty$ can be replaced by the condition $\Ex[\max\{0,F_t\}] < +\infty$ for all $t \ge t_o$ for some $t_o\in \N$. In that case $\mathcal{L} := \inf_{t\ge t_o}\{\Ex[\frac{F_t}{t}]\}$.


\subsection{Main results}
 \label{sec:mainresults}

We investigate quenched large deviations for the process $\{W_t\}_{t\in\N_0}$ under the following assumptions about the waiting-time distribution $p_\omega$, the reward probability measures $\lambda_\omega(\bdot|s)$ for $s\in\N$, and the potential $v_\omega$ (which are assumed to be in force throughout the paper). Denote by $B_{w,\delta}$ the open ball of radius $\delta>0$ centred at $w\in\rew$. For $r\in\rew$ and $A\subseteq\rew$, let $r+A$ be the set of points $r+w$ with $w\in A$.

\begin{assumption}
\label{ass1}
{\rm There exist $m$ coprime integers $\sigma_1,\ldots,\sigma_m \in \N$ such that $\Ex[\log p_{\bdot}(\sigma_i)]>-\infty$ for $i=1,\ldots,m$.}  \hfill$\spadesuit$
\end{assumption}

\begin{assumption}
\label{ass2}
{\rm There exist a finite-dimensional subspace $\mathcal{V}\subseteq\rew$ and, for $s\in\N$, measurable functions $\omega\mapsto r_{\omega,s}$ valued in $\mathcal{V}$ and points $x_s\in\rew$ such that $\Ex[\log\lambda_{\bdot}(r_{\bdot,s}+B_{x_s,\delta}|s)]>-\infty$ for all $\delta>0$ and $\Ex[\sup_{s\in\N}\max\{0,\|r_{\bdot,s}\|-\eta s\}]<+\infty$ for some $\eta\ge 0$.  Moreover, for each $s\in\N$ there exists a compact set $K_s\subset\rew$ such that $\Ex[\log\lambda_{\bdot}(r_{\bdot,s}+K_s|s)]>-\infty$.}\hfill$\spadesuit$
\end{assumption}

\begin{assumption}
\label{ass3}
{\rm $\Ex[\min\{0,v_{\bdot}(s)\}]>-\infty$ for all $s\in\N$ and $\Ex[\sup_{s\in\N}\max\{0, v_{\bdot}(s)-\eta s\}]<+\infty$ for some real number $\eta\ge 0$.} \hfill$\spadesuit$
\end{assumption}

Our first main result concerns the large deviations of the total reward $W_t$ with respect to the constrained pinning model $Q_{\omega,t}$ for a typical disorder $\omega$. For convenience, we leave out normalisation and consider the random measure $\omega\mapsto\mu_{\omega,t}$ on $\Brew$ defined, for $t\in \N$, by
\begin{equation}
\mu_{\omega,t}:= E_\omega\bigg[\mathds{1}_{\big\{\frac{W_t}{t}\in \,\bdot\,,\,t\in\mathcal{T}\big\}} \ee^{H_{\omega,t}}\bigg].
\label{mudef}
\end{equation}
The following result about the cumulant generating function associated with $\mu_{\omega,t}$ comes before an LDP. Let $\rew^\star$ be the
topological dual of $\rew$, and let $\mathcal{R}_+$ be the set of positive random variables on $\Omega$ that are almost surely finite. For $\varphi\in\rew^\star$ and $\zeta\in\Rl$, consider the extended real number
\begin{equation}
\Upsilon_\varphi(\zeta):= \infd_{R\in\mathcal{R}_+}\prob\,\mbox{-}\esssup_{\omega\in\Omega} \bigg\{\log E_\omega\Big[\ee^{\varphi(X_1)+v_\omega(S_1)-\zeta S_1+R(f^{S_1}\omega)-R(\omega)}
\mathds{1}_{\{S_1<\infty\}}\Big]\bigg\}. 
\label{def:var_formula}
\end{equation}
Finally, denote by $z$ the function that maps $\varphi\in\rew^\star$ in $z(\varphi) :=\inf\{\zeta\in\Rl\colon\,\Upsilon_\varphi(\zeta)\le0\}$.

\begin{proposition}
\label{Zlim}
The following hold:\\
(i) $z$ is proper convex, with $z(0)$ finite, and is lower semi-continuous.\\
(ii) $\lim_{t\uparrow\infty}\frac{1}{t}\log\int_\rew \ee^{t\varphi(w)}\mu_{\omega,t}(\dd w)=z(\varphi)$ $\pae$ for every $\varphi\in\rew^\star$.
\end{proposition}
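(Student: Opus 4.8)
The plan is to first establish part (ii) via Kingman's subadditive ergodic theorem, and then deduce the structural properties in part (i) from the variational formula together with soft convexity/semicontinuity arguments.

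For part (ii), fix $\varphi\in\rew^\star$ and set $F_t:=\log\int_\rew \ee^{t\varphi(w)}\mu_{\omega,t}(\dd w)=\log E_\omega[\mathds{1}_{\{t\in\mathcal{T}\}}\ee^{\varphi(W_t)+H_{\omega,t}}]$. The conditional independence identity \eqref{start_0}, applied by conditioning on $T_n=t$ and splitting the renewal sequence at the $n$th renewal, together with the multiplicativity of $\ee^{\varphi(W_t)}=\ee^{\varphi(W_t^{(1)})}\ee^{\varphi(W_{t'}^{(2)})\circ f^t}$ and of the Hamiltonian $H_{\omega,t+t'}=H_{\omega,t}+H_{f^t\omega,t'}\circ f^t$ across the split point, yields the superadditivity-type bound $F_{t+t'}(\omega)\ge F_t(\omega)+F_{t'}(f^t\omega)$: indeed, the event $\{t+t'\in\mathcal{T}\}$ contains the sub-event where $t\in\mathcal{T}$ and the post-$t$ renewal process hits $t'$, and on that sub-event the integrand factorises. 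So $-F_t$ is subadditive. To apply $(\star)$ to $\{-F_t\}$ we need $\Ex[\max\{0,-F_{t_o}\}]<+\infty$ for some $t_o$, i.e. a lower bound on $F_{t_o}$; this is where Assumptions \ref{ass1}, \ref{ass2}, \ref{ass3} enter. Using the coprime integers $\sigma_1,\ldots,\sigma_m$ from Assumption \ref{ass1}, for $t_o$ large enough one can realise $t_o$ as a sum of $\sigma_i$'s, so that restricting the expectation defining $F_{t_o}$ to that single renewal configuration gives $F_{t_o}(\omega)\ge\sum(\log p_{f^{\cdots}\omega}(\sigma_i)+v_{f^{\cdots}\omega}(\sigma_i)+\log\lambda_{\cdots}(\cdots))$ down to a compact/ball set, whose negative part is $\prob$-integrable by the stated assumptions; one also needs $\Ex[\max\{0,F_1\}]<+\infty$ or its $t\ge t_o$ analogue, which follows from the $\max\{0,\cdot\}$-integrability conditions on $v$ and on $\|r_{\bdot,s}\|$ in Assumptions \ref{ass2}--\ref{ass3} (bounding $\varphi(X_1)\le\|\varphi\|\,\|X_1\|$ and controlling the reward tails via $\eta s$, absorbed since $S_1$ bounds the time horizon). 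Kingman then gives $\lim_t F_t(\omega)/t=\mathcal{L}(\varphi):=\inf_t\Ex[F_t/t]$ $\prob$-a.s., and it remains to identify $\mathcal{L}(\varphi)=z(\varphi)$.

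The identification $\mathcal{L}(\varphi)=z(\varphi)$ is the heart of the argument and the step I expect to be the main obstacle. The idea is that $\mathcal{L}(\varphi)$ is the exponential growth rate of the solution to the renewal-type equation $u_t=\sum_{s\le t}E_{\cdot}[\ee^{\varphi(X_1)+v_\cdot(S_1)}\mathds{1}_{\{S_1=s\}}]\,u_{t-s}\circ f^s$ (with $u_0=1$), and such growth rates admit a variational characterisation: $\mathcal{L}(\varphi)\le\zeta$ if and only if there is a positive a.s.-finite $R$ (a corrector) making the operator $\omega\mapsto E_\omega[\ee^{\varphi(X_1)+v_\omega(S_1)-\zeta S_1+R(f^{S_1}\omega)-R(\omega)}\mathds{1}_{\{S_1<\infty\}}]$ bounded by $1$ in $\prob$-ess-sup. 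Concretely, for the ``$\le$'' direction, given such an $R$ one shows by iterating the one-step bound that $\ee^{-\zeta t+R(f^t\omega)-R(\omega)}u_t$ is a supermartingale-like quantity bounded along the renewal structure, forcing $\limsup_t\frac1t\log u_t\le\zeta$; taking the infimum over $R$ and then over admissible $\zeta$ gives $\mathcal{L}(\varphi)\le z(\varphi)$. For the reverse, one constructs a near-optimal corrector from the tilted renewal kernel itself — the natural candidate being (a regularised version of) $R(\omega)=-\log\sum_{t\ge0}\ee^{-\zeta' t}u_t(\omega)$ for $\zeta'$ slightly above $\mathcal{L}(\varphi)$, which by construction satisfies the one-step inequality with $\zeta'$; checking that this $R$ is a.s. finite and positive uses Kingman's a.s. convergence (the series converges since $u_t$ grows like $\ee^{\mathcal{L}(\varphi)t}<\ee^{\zeta' t}$) and the lower bounds from Assumptions \ref{ass1}--\ref{ass3} (to keep $u_t$ positive), giving $z(\varphi)\le\zeta'$ for all $\zeta'>\mathcal{L}(\varphi)$, hence $z(\varphi)\le\mathcal{L}(\varphi)$. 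This two-way corrector argument — especially the measurability and a.s.-finiteness of the constructed corrector and the careful handling of the sub-sequence $\{t:t\in\mathcal{T}\}$ versus all $t$ — is the delicate part and presumably relies on machinery developed in the preceding section of the paper.

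For part (i), properness and convexity of $z$ follow from its definition as $z(\varphi)=\inf\{\zeta:\Upsilon_\varphi(\zeta)\le0\}$ once one checks that $\zeta\mapsto\Upsilon_\varphi(\zeta)$ is nonincreasing (immediate, since the integrand in \eqref{def:var_formula} decreases in $\zeta$ on $\{S_1<\infty\}$) and that $(\varphi,\zeta)\mapsto\Upsilon_\varphi(\zeta)$ is jointly convex: convexity in $\varphi$ for fixed $R$ comes from H\"older's inequality applied to $\ee^{\varphi(X_1)}$ inside the expectation and the logarithm, convexity in $\zeta$ likewise, and taking $\infp$ over the convex parameter set $\mathcal{R}_+$ preserves joint convexity of the sublevel condition, whence the sublevel sets of $z$ (being projections of the convex set $\{(\varphi,\zeta):\Upsilon_\varphi(\zeta)\le0\}$) are convex and $z$ is convex. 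Finiteness of $z(0)$: at $\varphi=0$ the quantity $\Upsilon_0(\zeta)$ is the growth rate of the pure constrained pinning partition function, which is finite for $\zeta$ large (dominated by $\ee^{-\zeta S_1}$ and the $\max\{0,v\}$-integrability, Assumption \ref{ass3}) and strictly positive for $\zeta$ very negative (bounded below using Assumption \ref{ass1} as in the Kingman step), so $z(0)\in\Rl$. Lower semicontinuity of $z$ is equivalent to closedness of its sublevel sets; alternatively, having proved (ii), $z$ is an a.s. limit of the convex functions $\varphi\mapsto\frac1t F_t(\omega)$ (convex by H\"older in $\varphi$), and a pointwise limit of convex functions on a Banach space that is bounded above on a neighbourhood is continuous on the interior of its domain and lower semicontinuous everywhere after taking the lsc hull — but since each $\Upsilon_\varphi$ argument already gives closed sublevel sets directly, I would simply verify that if $\varphi_n\to\varphi$ and $z(\varphi_n)\le c$ then $\Upsilon_{\varphi_n}(c+\epsilon)\le0$ for each $\epsilon>0$, and pass to the limit using Fatou inside the $\prob$-ess-sup (monotone/dominated convergence on the integrand, lower semicontinuity of ess-sup under a.s. limits), concluding $\Upsilon_\varphi(c+\epsilon)\le0$ and hence $z(\varphi)\le c$.
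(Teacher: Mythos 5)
Your treatment of part (ii) follows the paper's route: supermultiplicativity of $Z_{\omega,t}(\varphi)$ across a renewal at time $t$, integrability of $\log Z_{\cdot,t}(\varphi)$ for $t$ large obtained exactly as you describe (restricting to a single renewal configuration built from the coprime $\sigma_i$'s of Assumption \ref{ass1} for the lower bound, and the $\max\{0,\cdot\}$ conditions of Assumptions \ref{ass2}--\ref{ass3} for the upper bound), then Kingman, then identification of the limit with $z(\varphi)$ via the renewal equation and the two-way corrector argument. Two slips in that last step: since $-\log Z_{\omega,t}(\varphi)$ is the subadditive sequence, the Kingman limit is $\sup_{t}\Ex[\frac1t\log Z_{\cdot,t}(\varphi)]$, not the infimum; and the corrector witnessing $z(\varphi)\le\zeta'$ is $R(\omega)=+\log\sum_{t\ge0}\ee^{-\zeta' t}Z_{\omega,t}(\varphi)$ --- with your minus sign $R$ is negative (hence not in $\mathcal{R}_+$) and the telescoping in the renewal equation runs the wrong way. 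Convexity and properness in part (i) are fine, whether obtained by your partial minimisation of the jointly convex map $(R,\varphi,\zeta)\mapsto\esssup_\omega\log E_\omega[\cdots]$ or, as the paper does, by H\"older applied to $\varphi\mapsto\log Z_{\omega,t}(\varphi)$ together with the integrability lemma.

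The genuine gap is lower semi-continuity. Your direct verification --- take $\varphi_n\to\varphi$ with $z(\varphi_n)\le c$, deduce $\Upsilon_{\varphi_n}(c+\epsilon)\le0$, and ``pass to the limit using Fatou inside the ess-sup'' --- does not go through, for two reasons. First, $\Upsilon_\varphi(\zeta)$ is an \emph{infimum} over correctors $R$; for each fixed $R$ the inner quantity is indeed lower semi-continuous in $\varphi$ (Fatou plus ess-sup), but an infimum of lower semi-continuous functions is not lower semi-continuous, and the near-optimal correctors $R_n$ witnessing $\Upsilon_{\varphi_n}(c+\epsilon)\le\delta$ change with $n$, so there is no single $R$ with which to pass to the limit. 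Second, even for a fixed $R$, comparing the integrands at $\varphi_n$ and $\varphi$ costs a factor $\ee^{(\varphi-\varphi_n)(X_1)}\le\ee^{\|\varphi-\varphi_n\|\,\|X_1\|}$, and no exponential moment of $\|X_1\|$ is assumed, so norm-closeness of $\varphi_n$ to $\varphi$ gives no control on this factor. The paper obtains lower semi-continuity by an entirely different and much heavier route: in Sections \ref{sec:WLDPc} and \ref{sec:Legendre} it proves, after establishing the weak LDP for $\{\mu_{\omega,t}\}_{t\in\N}$ via a coupling and exponential approximation, that $z=z_o=J_o^\star$ is itself the Legendre transform of the rate function $J_o$, and Legendre transforms are automatically (weak-$\star$, hence norm) lower semi-continuous. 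So this part of Proposition \ref{Zlim} cannot be closed off with the variational formula alone as you propose; it genuinely depends on the later duality $J_o^\star=z_o$.
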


The variational definition of the function $z$, which in (\ref{def:var_formula}) involves the gradient $R\circ f^{S_1}-R$ of auxiliary random variables $R\in\mathcal{R}_+$ as a corrector, is our variational formula for the quenched free energy. The number $\zeta$ is reminiscent of a renewal equation, and accommodates for a Cram\'er-Lundberg parameter to describe its growth rate \cite{mitovbook}. We will comment further on this formula below. For now, we note that Proposition \ref{Zlim} suggests as a putative rate function the Legendre transform of $z$, which is the convex lower semi-continuous function $J$ that maps $w\in\rew$ in
\begin{equation*}
 J(w):=\sup_{\varphi\in\rew^\star}\big\{\varphi(w)-z(\varphi)\big\}.
\end{equation*}
The following theorem states a quenched LDP for the family of measures $\{\mu_{\omega,t}\}_{t \in \N}$.

\begin{theorem}
\label{WLDPc}
$\pae$ the family $\{\mu_{\omega,t}\}_{t \in \N}$ satisfies the weak LDP with rate function $J$, i.e.,\\
$(i)$ $\liminf_{t\uparrow\infty} \frac{1}{t}\log\mu_{\omega,t}(G) \ge-\inf_{w\in G} J(w)$ for all $G\subseteq\rew$ open.\\
$(ii)$ $\limsup_{t\uparrow\infty} \frac{1}{t}\log\mu_{\omega,t}(K) \le-\inf_{w\in K} J(w)$ for all $K\subset\rew$ compact.
\end{theorem}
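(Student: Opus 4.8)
The plan is to derive the weak LDP from Proposition \ref{Zlim} by a standard Gärtner--Ellis type argument, adapted to the Banach-space setting where the contraction is only available against continuous linear functionals.

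First I would establish the large deviation upper bound on compact sets. Fix a typical $\omega$ for which Proposition \ref{Zlim}(ii) holds for every $\varphi$ in a countable weak$^*$-dense subset of $\rew^\star$ (such a set exists because $\rew$ is separable, hence $\rew^\star$ is weak$^*$-separable); one must check the exceptional null set can be chosen independently of $\varphi$, which follows by taking a countable union and then upgrading to all $\varphi$ via lower semicontinuity of $z$ and the fact that $\limsup_t \frac1t\log\int e^{t\varphi(w)}\mu_{\omega,t}(\dd w)$ is a convex, weak$^*$-lower-semicontinuous functional of $\varphi$ that agrees with $z$ on a dense set. Given this, for any compact $K$ and any $\epsilon>0$ one covers $K$ by finitely many half-spaces: for each $w\in K$ pick $\varphi_w\in\rew^\star$ with $\varphi_w(w)-z(\varphi_w)\ge \min\{J(w),1/\epsilon\}-\epsilon$, let $U_w:=\{w':\varphi_w(w')>\varphi_w(w)-\epsilon\}$, extract a finite subcover $U_{w_1},\dots,U_{w_N}$, and use the exponential Chebyshev bound $\mu_{\omega,t}(U_{w_j})\le e^{-t(\varphi_{w_j}(w_j)-\epsilon)}\int e^{t\varphi_{w_j}(w)}\mu_{\omega,t}(\dd w)$ together with $\frac1t\log\sum_{j=1}^N(\cdots)\to\max_j(\cdots)$ to conclude $\limsup_t\frac1t\log\mu_{\omega,t}(K)\le-\min_j(\varphi_{w_j}(w_j)-z(\varphi_{w_j})-\epsilon)\le-\inf_{w\in K}J(w)+2\epsilon+\epsilon^2(\cdots)$, and let $\epsilon\downarrow0$.

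Next I would prove the lower bound on open sets. The standard route is to show that for every \emph{exposed point} $w$ of $J$ with exposing functional $\varphi$ one has $\liminf_t\frac1t\log\mu_{\omega,t}(B_{w,\delta})\ge-J(w)$, via a change of measure (tilting $\mu_{\omega,t}$ by $e^{t\varphi(w)}$, normalised) and an argument that the tilted measures concentrate near $w$; then one invokes the Banach-space version of the fact that, since $z$ is the convex conjugate pair of $J$ (by Proposition \ref{Zlim} and the definition of $J$), the set of exposed points is rich enough --- more precisely, $\sup_{w\in\rew}\{\varphi(w)-J(w)\}=z(\varphi)$ lets one approximate. To avoid subtleties with exposed points in infinite dimensions, I would instead use the projective/finite-dimensional approach: for $G$ open and $w\in G$ with $J(w)<\infty$, choose a finite-dimensional subspace containing $w$ and an appropriate $\varphi$ nearly achieving $\varphi(w)-z(\varphi)\approx J(w)$, push forward to $\Rl^k$ where the classical Gärtner--Ellis lower bound applies to the projected measures (whose log-moment-generating function is controlled by $z$ restricted to functionals factoring through the projection), and lift back. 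The concentration needed for the tilted measure is where one uses that $z$ is differentiable (or at least that its conjugate is strictly convex) at the relevant points, or one argues directly along a subsequence.

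The main obstacle I expect is the lower bound, specifically transferring the Gärtner--Ellis mechanism from $\Rl^d$ to the separable Banach space $\rew$ while only controlling the \emph{linear} exponential functionals $\int e^{t\varphi(w)}\mu_{\omega,t}(\dd w)$. In infinite dimensions the set of exposed points of $J$ need not be dense in $\{J<\infty\}$, and tightness/exponential tightness of $\{\mu_{\omega,t}\}$ is not assumed (indeed the paper only claims a \emph{weak} LDP, not a full one), so one cannot simply quote Baldi's theorem. The resolution is presumably to prove the lower bound only locally and use that $J$ is the conjugate of a function $z$ which is itself given by the variational formula (\ref{def:var_formula}); the corrector structure should let one construct explicit near-optimal tiltings of $P_\omega$ realising a prescribed asymptotic reward density $w$, giving the lower bound by a direct subadditivity/ergodic computation rather than by abstract convex duality. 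I would expect the detailed proof to split into the ``nice'' $w$ (where (\ref{def:var_formula}) has a minimiser and the tilted renewal process has density exactly $w$) handled directly, and general $w\in G$ handled by approximation using openness of $G$ and lower semicontinuity of $J$.
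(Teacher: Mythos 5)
Your proposal follows a G\"artner--Ellis route built on Proposition \ref{Zlim}, which is not the paper's strategy, and both halves have genuine gaps. For the lower bound, the exposed-point/tilting mechanism requires $z$ to be essentially smooth, and here it is not: Proposition \ref{disorder_effect} exhibits models in which $J$ has an affine stretch on $(0,u]$, so the interior points of that stretch are not exposed and the G\"artner--Ellis lower bound simply misses every open set contained in $(0,u)$; pushing forward to $\Rl^k$ does not help, because the projected cumulant generating function is just $z$ restricted to a finite-dimensional subspace of $\rew^\star$ and inherits the non-differentiability. Your upper bound also has a gap at the null-set step: two convex lower semicontinuous functions agreeing on a (weak$^\star$-)dense set need not agree everywhere, and what the Chebyshev covering actually requires is the \emph{upper} estimate $\limsup_{t}\frac1t\log\int_\rew \ee^{t\varphi_w(v)}\mu_{\omega,t}(\dd v)\le z(\varphi_w)$ at functionals $\varphi_w$ that are not confined to any fixed countable family; reducing the Legendre supremum to a countable set of functionals, as in Lemma \ref{lem:Legendre_sep}, needs norm-separability of $\rew^\star$, which Theorem \ref{WLDPc} does not assume (the paper invokes it only for Theorem \ref{WLDPfree} with $\ell>-\infty$).

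The paper's proof runs in the opposite direction and never derives the LDP from $z$. It first proves a weak LDP with the rate function $J_o(w):=-\inf_{\delta>0}\{\liminf_{k\uparrow\infty}\mathcal{L}^k(B_{w,\delta})\}$ obtained by applying Kingman's subadditive ergodic theorem to $\log\mu^k_{\omega,t}(C)$ for open convex $C$, where the measures $\mu^k_{\omega,t}$ are built from regularised reward laws $\lambda^k_\omega(\bdot|s)$ (Lemma \ref{coupling}) that restore the integrability Kingman needs and are coupled to the original rewards so that $W_t'$ is an exponentially good approximation of $W_t$ (the bound (\ref{Chernoff_k})); both large deviation bounds then follow from the mere existence of the limits $\mathcal{L}^k(C)$ (Proposition \ref{prop:ldp_Jo}), with no smoothness or duality input. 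Only afterwards is $J_o$ identified with $J=z^\star$, by proving $J_o^\star=z_o$ in Section \ref{sec:Legendre}; the hard inequality $J_o^\star\ge z_o$ uses the truncated partition functions $\mathcal{E}_{\omega,t}$, the compactification Lemma \ref{lem:ball_to_compact}, and the finite-dimensionality of $\mathcal{V}$ in Lemma \ref{lemma:aux}. To salvage your strategy you would have to supply this subadditive (Ruelle--Lanford type) input for the lower bound; convex duality against the linear functionals alone cannot produce it here.
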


It is desirable (for example, in order to be able to apply Varadhan's lemma \cite{dembobook,denhollanderbook_1}) that $\pae$ the family $\{\mu_{\omega,t}\}_{t \in \N}$ satisfies a full LDP with a good rate function. This means that the large deviation upper bound $(ii)$ in Theorem \ref{WLDPc} holds for all closed sets, and not only for compact sets, and that $J$ has compact level sets, i.e., the sets $\{w\in\rew:J(w)\le a\}$ are compact for all $a\in\Rl$. The following corollary of Theorem \ref{WLDPc} addresses this issue when the dimension of $\rew$ is finite.

\begin{corollary}
\label{FLDPc}
If $\rew$ is finite-dimensional and there exist real numbers $\xi>0$ and $M\ge 0$ such that
\begin{equation*}
\Ex\bigg[\sup_{s\in\N}\max\bigg\{0,\log\int_\rew\ee^{\xi\|x\|-Ms}\lambda_{\bdot}(\dd x|s)\bigg\}\bigg]<+\infty,
\end{equation*}
then $\pae$ the family $\{\mu_{\omega,t}\}_{t \in \N}$ satisfies the full LDP with good rate function $J$.
\end{corollary}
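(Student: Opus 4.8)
The plan is, starting from Theorem \ref{WLDPc}, to upgrade the weak LDP to the full LDP with a good rate function by establishing \emph{exponential tightness}. By Theorem \ref{WLDPc}, for $\prob$-a.e.\ $\omega$ the family $\{\mu_{\omega,t}\}_{t\in\N}$ satisfies the weak LDP with the lower semi-continuous rate function $J$, whose level sets are therefore closed. Since $\rew$ is finite-dimensional, it suffices to show that for $\prob$-a.e.\ $\omega$ and every $L>0$ there is a compact $K_L\subset\rew$ with $\limsup_{t\uparrow\infty}\frac{1}{t}\log\mu_{\omega,t}(\rew\setminus K_L)\le-L$. Granting this, the upper bound for a closed set $F$ follows by writing $\mu_{\omega,t}(F)\le\mu_{\omega,t}(F\cap K_L)+\mu_{\omega,t}(\rew\setminus K_L)$, bounding the first term via Theorem \ref{WLDPc}(ii) (as $F\cap K_L$ is compact) and letting $L\uparrow\infty$; and the compactness of each level set $\{w\in\rew:J(w)\le a\}$ follows by comparing the lower bound of Theorem \ref{WLDPc}(i) on small balls with the tail estimate obtained below.

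The heart of the matter is the tail estimate: there is a finite deterministic constant $C$ such that, for $\prob$-a.e.\ $\omega$,
\begin{equation}
\limsup_{t\uparrow\infty}\frac{1}{t}\log\mu_{\omega,t}\big(\{w\in\rew:\|w\|>\rho\}\big)\le C-\xi\rho\qquad\text{for all }\rho>0,
\label{tailest}
\end{equation}
whence $K_L:=\{w\in\rew:\|w\|\le(C+L)/\xi\}$ is compact and gives exponential tightness. To prove (\ref{tailest}) I would start from (\ref{mudef}) and combine the Chebyshev-type inequality $\mathds{1}_{\{\|W_t\|>\rho t\}}\le\ee^{-\xi\rho t}\ee^{\xi\|W_t\|}$ with $\|W_t\|\le\sum_{i=1}^{N_t}\|X_i\|$, obtaining $\mu_{\omega,t}(\{\|w\|>\rho\})\le\ee^{-\xi\rho t}\,E_\omega[\exp(\sum_{i=1}^{N_t}(\xi\|X_i\|+v_{f^{T_{i-1}}\omega}(S_i)))\,\mathds{1}_{\{t\in\mathcal{T}\}}]$. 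Since the events $\{T_n=t\}$, $n\ge1$, are pairwise disjoint and $N_t=n$ on $\{T_n=t\}$, the explicit form of $P_\omega$ rewrites this expectation as $\sum_{n\ge1}\sum_{s_1+\cdots+s_n=t}\prod_{i=1}^n p_{f^{t_{i-1}}\omega}(s_i)\,\psi_{f^{t_{i-1}}\omega}(s_i)$, with $\psi_\omega(s):=\ee^{v_\omega(s)}\int_\rew\ee^{\xi\|x\|}\lambda_\omega(\dd x|s)$.

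It then remains to control this tilted renewal sum. By Assumption \ref{ass3} and the hypothesis of the corollary, $\psi_\omega(s)\le\ee^{(\eta+M)s}\,\ee^{V_\omega+G_\omega}$, where $\eta$ is as in Assumption \ref{ass3} and $V_\omega:=\sup_{s\in\N}\max\{0,v_\omega(s)-\eta s\}$, $G_\omega:=\sup_{s\in\N}\max\{0,\log\int_\rew\ee^{\xi\|x\|-Ms}\lambda_\omega(\dd x|s)\}$ are non-negative and $\prob$-integrable. On each summand $s_1+\cdots+s_n=t$ and $\{t_0,\ldots,t_{n-1}\}\subseteq\{0,\ldots,t-1\}$, so $\sum_{i=1}^n(V_{f^{t_{i-1}}\omega}+G_{f^{t_{i-1}}\omega})\le\sum_{k=0}^{t-1}(V_{f^k\omega}+G_{f^k\omega})$; pulling out this factor together with $\ee^{(\eta+M)t}$ and using $\sum_{n\ge1}\sum_{s_1+\cdots+s_n=t}\prod_{i=1}^n p_{f^{t_{i-1}}\omega}(s_i)=P_\omega[t\in\mathcal{T}]\le1$ gives $E_\omega[\exp(\sum_{i=1}^{N_t}(\xi\|X_i\|+v_{f^{T_{i-1}}\omega}(S_i)))\,\mathds{1}_{\{t\in\mathcal{T}\}}]\le\ee^{(\eta+M)t}\,\ee^{\sum_{k=0}^{t-1}(V_{f^k\omega}+G_{f^k\omega})}$. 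Dividing by $t$, taking logarithms and letting $t\uparrow\infty$, Birkhoff's ergodic theorem (applicable since $f$ is ergodic measure-preserving and $V_{\bdot}+G_{\bdot}$ is integrable) yields (\ref{tailest}) with $C:=\eta+M+\Ex[V_{\bdot}+G_{\bdot}]$.

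Once (\ref{tailest}) is established, the remaining verifications are routine. The closed-set upper bound is obtained as described in the first paragraph. For goodness, if $J(w)\le a$ then Theorem \ref{WLDPc}(i) applied to $B_{w,\delta}$ gives $\liminf_{t\uparrow\infty}\frac{1}{t}\log\mu_{\omega,t}(B_{w,\delta})\ge-a$, whereas $B_{w,\delta}\subseteq\{v\in\rew:\|v\|>\|w\|-\delta\}$ and (\ref{tailest}) force $\xi(\|w\|-\delta)\le C+a$; letting $\delta\downarrow0$ shows $\{w\in\rew:J(w)\le a\}\subseteq\{w\in\rew:\|w\|\le(C+a)/\xi\}$, which with lower semi-continuity gives compactness. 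I expect the main obstacle to be the tail estimate (\ref{tailest}): the point is to bound the disorder-dependent exponential growth rate of the tilted renewal sum uniformly enough for the ergodic theorem to take over, which is where Assumptions \ref{ass2}--\ref{ass3} and the extra exponential-moment hypothesis of the corollary enter; the reduction from exponential tightness to the full LDP and to goodness of $J$ is soft.
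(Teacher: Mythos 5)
Your proposal is correct and follows essentially the same route as the paper: the paper also upgrades the weak LDP of Theorem \ref{WLDPc} to the full LDP via exponential tightness (Lemma \ref{lem:exp_tight}), proved by the identical Chernoff-type bound $\mathds{1}_{\{\|W_t\|\ge\rho t\}}\le\ee^{-\xi\rho t}\ee^{\xi\sum_{i=1}^{N_t}\|X_i\|}$, the same expansion over renewal configurations with the factors $\ee^{\pm Ms_i}$ inserted, the same domination of the disorder-dependent terms by $\sum_{\tau=0}^{t-1}(V_{f^\tau\omega}+G_{f^\tau\omega})$, and Birkhoff's ergodic theorem, yielding the same constant $\eta+M+\Ex[V_{\bdot}+G_{\bdot}]-\xi\rho$. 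The only cosmetic difference is that you merge the Hamiltonian bound and the reward moment bound into a single function $\psi_\omega(s)$ and spell out the standard implication from exponential tightness to the full LDP with good rate function, which the paper instead quotes from the literature.
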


\begin{remark}
{\rm Since $\lim_{t\uparrow\infty}\frac{1}{t}\log\mu_{\omega,t}(\rew)=z(0)$ $\pae$ with $z(0)$ finite by Proposition \ref{Zlim}, Theorem \ref{WLDPc} and Corollary \ref{FLDPc} establish quenched LDPs with rate function $J+z(0)$ for the total reward with respect to the constrained pinning model.}
\end{remark}

Our second main result describes the large deviations of $W_t$ with respect to the pinning model $P_{\omega,t}$ by exploiting the large deviation bounds for the constrained model. As before, we leave normalisation aside and focus on the random measure $\omega\mapsto\nu_{\omega,t}$ on $\Brew$ defined, for $t\in \N$, by
\begin{equation}
\nu_{\omega,t}:= E_\omega\bigg[\mathds{1}_{\big\{\frac{W_t}{t}\in \,\bdot\,\big\}} \ee^{H_{\omega,t}}\bigg].
\label{nudef}
\end{equation}
Given an extended real number $\ell\in[-\infty,0]$, denote by $I_\ell$ the Legendre transform of $z_\ell:=\max\{z,\ell\}$, which associates $w\in\rew$ with
\begin{equation*}
I_\ell(w):=\sup_{\varphi\in\rew^\star}\big\{\varphi(w)-z_\ell(\varphi)\big\}.
\end{equation*}
Note that $z_\ell=z$ and $I_\ell=J$ if $\ell=-\infty$.

\begin{theorem}
\label{WLDPfree}
Suppose that there exists an $\ell\in[-\infty,0]$ such that $\pae$
\begin{equation}
\label{Ptailcond}
\begin{aligned}
&\adjustlimits\lim_{\epsilon\downarrow 0}\limsup_{s\uparrow\infty}\sup_{t\in\N_0}
\bigg\{\frac{1}{s}\log P_{f^t\omega}[S_1>s]-\epsilon \frac{t}{s}\bigg\}\le \ell,\\
&\adjustlimits\lim_{\epsilon\downarrow 0}\liminf_{s\uparrow\infty}\inf_{t\in\N_0}
\bigg\{\frac{1}{s}\log P_{f^t\omega}[S_1>s]+\epsilon \frac{t}{s}\bigg\}\ge\ell.
\end{aligned}
\end{equation}
Then the following hold: \\
$(i)$ $\lim_{t\uparrow\infty}\frac{1}{t}\log\int_\rew\ee^{t\varphi(w)}\nu_{\omega,t}(\dd w)=z_\ell(\varphi)$ $\pae$ for every $\varphi\in\rew^\star$.\\
$(ii)$ If either $\ell=-\infty$ or $\ell>-\infty$ and $\rew^\star$ is separable, then $\pae$ the family $\{\nu_{\omega,t}\}_{t \in \N}$ satisfies the weak LDP with rate function $I_\ell$.
\end{theorem}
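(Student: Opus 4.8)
The plan is to deduce Theorem \ref{WLDPfree} from the already-established results for the constrained model (Proposition \ref{Zlim} and Theorem \ref{WLDPc}) by comparing $\nu_{\omega,t}$ with $\mu_{\omega,t}$, using the last-renewal decomposition together with the tail hypothesis (\ref{Ptailcond}) to control the "overshoot" contribution coming from the final incomplete waiting time. The starting point is the identity, for any Borel set $A\subseteq\rew$,
\[
\nu_{\omega,t}(A) = \sum_{k=0}^{t} \mu_{\omega,k}(A)\, P_{f^k\omega}[S_1>t-k],
\]
which follows by conditioning on the last renewal time $T_{N_t}=k$ and using the conditional independence relation (\ref{start_0}) (the Hamiltonian and $W_t$ depend only on the renewals up to $T_{N_t}$, so the reward collected in $(k,t]$ is zero and the only remaining event is $\{S_1>t-k\}$ in the shifted environment $f^k\omega$).

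For part $(i)$, I would apply this identity with $\mu$ and $\nu$ replaced by their Laplace transforms in the direction $\varphi$: writing $L^\mu_{\omega,t}(\varphi):=\int_\rew \ee^{t\varphi(w)}\mu_{\omega,t}(\dd w)$ and similarly $L^\nu$, one gets $L^\nu_{\omega,t}(\varphi)=\sum_{k=0}^t \ee^{(t-k)\varphi(\cdots)}$-type corrections; more precisely, since $W_t$ does not change after time $k$, $\int \ee^{t\varphi(w)}$ over the $k$-th piece equals $\ee^{(t-k)\varphi(W_k/t)\cdot 0}$... — let me restate cleanly: because $W_t=W_k$ on $\{T_{N_t}=k\}$, we have $\ee^{t\varphi(W_t/t)}=\ee^{t\varphi(W_k/t)}$, and rescaling $W_k/t = (k/t)(W_k/k)$ gives $L^\nu_{\omega,t}(\varphi)=\sum_{k=0}^t L^\mu_{\omega,k}\!\big(\tfrac{k}{t}\varphi\big)\,P_{f^k\omega}[S_1>t-k]$. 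The lower bound $\liminf\frac1t\log L^\nu_{\omega,t}(\varphi)\ge z_\ell(\varphi)$ is obtained by keeping only terms with $k\sim\theta t$ and $t-k\sim(1-\theta)t$, using Proposition \ref{Zlim}(ii) (a Cesàro/uniform-in-direction argument gives $\frac1k\log L^\mu_{\omega,k}(\tfrac kt\varphi)\to \theta\,(\text{something})$; in fact one uses that $\xi\mapsto z(\xi)$ is convex lsc and, by a soft superadditivity argument for $\log L^\mu$, $\frac1t\log L^\mu_{\omega,\lfloor\theta t\rfloor}(\theta\varphi)\to\theta z(\varphi)$... this needs care, see below) together with the second line of (\ref{Ptailcond}) to bound $\frac{1}{t-k}\log P_{f^k\omega}[S_1>t-k]\gtrsim \ell$. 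Optimising over $\theta\in[0,1]$ yields $\max\{z(\varphi),\ell\}=z_\ell(\varphi)$. For the upper bound one splits the sum of $t+1$ terms, pays a negligible $\frac1t\log(t+1)\to0$, and bounds each term by $\ee^{k(z(\varphi)+o(1))}\ee^{(t-k)(\ell+o(1))}$ using Proposition \ref{Zlim}(ii) and the first line of (\ref{Ptailcond}) (the $\epsilon t/s$ correction in (\ref{Ptailcond}) is exactly what is needed to make the sup over the shift $t=k$ harmless after dividing by $s=t-k$); the maximum over $k$ again gives $z_\ell(\varphi)$.

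For part $(ii)$, the weak LDP with rate $I_\ell$ follows from part $(i)$ by the abstract Gärtner–Ellis/Baldi-type machinery on Banach spaces: the large-deviation upper bound on compact sets is automatic from the existence of the limiting logarithmic moment generating function $z_\ell$ (lower semicontinuous regularisation plus the min-max inequality), while the lower bound on open sets for exposed points combined with the convexity of $z_\ell$ (and the hypothesis that $\rew^\star$ is separable when $\ell>-\infty$, which guarantees enough exposing hyperplanes, or is vacuous when $\ell=-\infty$ since then $I_\ell=J$ and Theorem \ref{WLDPc} applies directly) upgrades to all open sets. Concretely, when $\ell=-\infty$ we have $z_\ell=z$, $I_\ell=J$, $\nu_{\omega,t}\le\mu_{\omega,t}$ is false but rather $\mu_{\omega,t}(A)\le\nu_{\omega,t}(A)$ and the reverse bound with the tail sum degenerating; one checks the weak LDP transfers since the moment generating function is unchanged — in fact for $\ell=-\infty$ the statement is essentially immediate from Theorem \ref{WLDPc} and part $(i)$. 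When $\ell>-\infty$ the separability of $\rew^\star$ lets us invoke the standard result (as in \cite{dembobook}) that a family on a locally convex space whose normalised logarithmic moment generating function converges pointwise to a convex lsc function satisfies the weak LDP with the conjugate rate function, provided a separating family of continuous linear functionals is available and countable.

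The main obstacle I anticipate is part $(i)$, specifically proving $\frac1t\log L^\mu_{\omega,\lfloor\theta t\rfloor}(\theta\varphi)\to \theta z(\varphi)$ with sufficient uniformity to run both the sum-splitting upper bound and the single-term lower bound. Proposition \ref{Zlim}(ii) only gives, for fixed $\xi$, $\frac1k\log L^\mu_{\omega,k}(\xi)\to z(\xi)$; here the argument $\tfrac{k}{t}\varphi$ varies with $t$ and ranges over the segment $[0,\varphi]$, so one needs either an equicontinuity-in-$\varphi$ estimate for $\frac1k\log L^\mu_{\omega,k}$ (obtainable from Assumptions \ref{ass1}–\ref{ass3} via the corrector bounds underlying (\ref{def:var_formula})) or a reduction to finitely many directions by convexity. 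I expect the cleanest route is: fix $\theta$, note $L^\mu_{\omega,k}(\tfrac kt\varphi)$ is sandwiched, by Hölder in the Gibbs measure, between powers of $L^\mu_{\omega,k}(0)$ and $L^\mu_{\omega,k}(\varphi)$ when $\tfrac kt\in[0,1]$ (convexity of $\xi\mapsto\log L^\mu_{\omega,k}(\xi)$ gives $\log L^\mu_{\omega,k}(\tfrac kt\varphi)\le\tfrac kt\log L^\mu_{\omega,k}(\varphi)+(1-\tfrac kt)\log L^\mu_{\omega,k}(0)$), and then pass to the limit using Proposition \ref{Zlim}(ii) for the two endpoints $0$ and $\varphi$ only. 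This turns the whole of $(i)$ into an optimisation of $\theta z(\varphi)+(1-\theta)\ell$ over $\theta\in[0,1]$ — which is $z_\ell(\varphi)$ since $z(\varphi)\ge z(0)$ may fail but $z_\ell=\max\{z,\ell\}$ is exactly $\sup_\theta$ of that affine function when one also allows the degenerate renewal at $\theta=0$ — modulo the matching lower bound, where one instead uses the reverse convexity estimate is unavailable, so there one genuinely restricts to $k=\lfloor\theta t\rfloor$, applies Proposition \ref{Zlim}(ii) at the single point $\theta\varphi$ (now a fixed direction for fixed $\theta$), and the second line of (\ref{Ptailcond}) for the tail factor.
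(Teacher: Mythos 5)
Your overall strategy -- decompose $\nu_{\omega,t}$ over the last renewal time, feed in the constrained results, and use \eqref{Ptailcond} for the final incomplete waiting time -- is exactly the paper's. But two points need fixing, one minor and one a genuine gap.

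First, the minor one. Your measure-level identity $\nu_{\omega,t}(A)=\sum_k\mu_{\omega,k}(A)P_{f^k\omega}[S_1>t-k]$ is false as written, because on $\{T_{N_t}=k\}$ the relevant event is $\{\tfrac{W_k}{t}\in A\}$, not $\{\tfrac{W_k}{k}\in A\}$; the paper's version \eqref{mu2nu} keeps the $W_\tau/t$ scaling and has to handle the mismatch explicitly in the set-level bounds. For the Laplace transforms the mismatch disappears entirely: $\ee^{t\varphi(W_t/t)}=\ee^{\varphi(W_k)}=\ee^{k\varphi(W_k/k)}$, so the correct identity is $\int_\rew\ee^{t\varphi(w)}\nu_{\omega,t}(\dd w)=\sum_{k=0}^tZ_{\omega,k}(\varphi)\,P_{f^k\omega}[S_1>t-k]$ with the \emph{fixed} direction $\varphi$, which is \eqref{CGF_free}. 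Your $L^\mu_{\omega,k}(\tfrac kt\varphi)$ is an algebra slip, and the entire ``main obstacle'' you anticipate (equicontinuity in the direction, H\"older interpolation between $0$ and $\varphi$) is an artifact of it. With the correct identity, part $(i)$ follows as in Lemma \ref{lem:CGF_free}: for the lower bound you do not even need to optimise over $\theta$, since $\theta z(\varphi)+(1-\theta)\ell$ is affine in $\theta$ and its maximum over $[0,1]$ is attained at an endpoint, so keeping only the terms $k=t$ and $k=0$ already yields $\max\{z(\varphi),\ell\}$.

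The genuine gap is the lower bound in part $(ii)$ when $\ell>-\infty$. You propose to get the lower bound for all open sets from part $(i)$ via G\"artner--Ellis/Baldi machinery. That machinery gives the lower bound only at exposed points of $I_\ell$ with exposing hyperplanes in a suitable subset of $\dom z_\ell$, and upgrading to all open sets requires essential smoothness (steepness) of $z_\ell$ -- which is neither assumed nor true here: $z_\ell=\max\{z,\ell\}$ typically has a kink precisely where the two branches cross, and $z$ itself is only known to be proper convex and lower semi-continuous. Separability of $\rew^\star$ does not repair this; in the paper it is used only for the \emph{upper} bound (Lemma \ref{lem:Legendre_sep}, to get a single full-measure set of $\omega$ working for a countable family of functionals), while your sketch of the upper bound is essentially correct. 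What is actually needed for the lower bound is a direct construction (Lemma \ref{lem:lower_bound_free}): restrict the last renewal to $\tau_t=\lfloor\beta t\rfloor$, apply the constrained lower bound of Proposition \ref{prop:ldp_Jo} to the rescaled ball $B_{w/\beta,\delta/2}$ and the second line of \eqref{Ptailcond} to the tail factor, obtaining
\begin{equation*}
\liminf_{t\uparrow\infty}\tfrac1t\log\nu_{\omega,t}(B_{w,\delta})\ge-\beta J(w/\beta)+(1-\beta)\ell
=-\sup_{\varphi\in\dom z}\big\{\varphi(w)-\beta z(\varphi)-(1-\beta)\ell\big\},
\end{equation*}
and then identify $\inf_{\beta\in[0,1]}\sup_{\varphi\in\dom z}\{\varphi(w)-\beta z(\varphi)-(1-\beta)\ell\}$ with $I_\ell(w)$ by Sion's minimax theorem (plus a separate limiting argument for $\beta=0$). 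This minimax identification is the key step missing from your proposal, and without it (or some substitute) the case $\ell>-\infty$ does not close.
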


The parameter $\ell$ in Theorem \ref{WLDPfree} plays the role of an \emph{exponential tail constant} for the waiting-time distribution.  When $\ell>-\infty$, we need separability of $\rew^\star$ to prove that the large deviation upper bound for compact sets holds uniformly $\pae$. The large deviation lower bound for open sets holds uniformly $\pae$ even without separability of $\rew^\star$. If $\rew$ is finite-dimensional, then the weak LDP can be easily promoted to a full LDP according to the following corollary of Theorem \ref{WLDPfree}.

\begin{corollary}
\label{FLDPfree}
Suppose that \eqref{Ptailcond} holds. Then, under the hypotheses of Corollary \ref{FLDPc}, $\pae$ the family $\{\nu_{\omega,t}\}_{t \in \N}$ satisfies the full LDP with good rate function $I_\ell$.
\end{corollary}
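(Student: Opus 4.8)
\textbf{Proof plan for Corollary \ref{FLDPfree}.}

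The plan is to combine the weak LDP for $\{\nu_{\omega,t}\}_{t\in\N}$ furnished by Theorem \ref{WLDPfree}(ii) with an exponential tightness argument, exactly as one promotes a weak LDP to a full LDP in the classical theory \cite{dembobook,denhollanderbook_1}. Since $\rew$ is finite-dimensional, exponential tightness is equivalent to the existence, for every $a\in\Rl$, of a compact set $K_a\subset\rew$ with $\limsup_{t\uparrow\infty}\frac1t\log\nu_{\omega,t}(\rew\setminus K_a)\le-a$ $\prob$-a.e.; and in finite dimension the compact sets $K_a$ may be taken to be closed balls, so it suffices to control $\nu_{\omega,t}\big(\{w\in\rew\colon\|w\|>\rho\}\big)$ for large $\rho$. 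Because $\nu_{\omega,t}$ already carries the LDP-relevant scaling, once exponential tightness is in hand the upper bound in the weak LDP automatically extends from compact to all closed sets, and the level sets of $I_\ell$ become compact (using that $z_\ell$, hence $I_\ell$, is finite-valued near the relevant arguments, as in Proposition \ref{Zlim}), yielding the full LDP with good rate function $I_\ell$.

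The core estimate is a Chebyshev-type bound: for $\varphi\in\rew^\star$ and $\rho>0$,
\begin{equation*}
\nu_{\omega,t}\big(\{w\colon\|w\|>\rho\}\big)
\le \ee^{-t\|\varphi\|_\star\rho}\int_\rew \ee^{t\,\|\varphi\|_\star\|w\|}\,\nu_{\omega,t}(\dd w),
\end{equation*}
so it is enough to show that $\limsup_{t\uparrow\infty}\frac1t\log\int_\rew \ee^{t\psi(\|w\|)}\,\nu_{\omega,t}(\dd w)$ grows sublinearly (or at worst linearly with controllable slope) in the size of the exponent, $\prob$-a.e. Writing $W_t=\sum_{i=1}^{N_t}X_i$ and bounding $\|W_t\|\le\sum_{i=1}^{N_t}\|X_i\|$, this reduces to estimating the exponential moment of a renewal-reward sum whose per-renewal weight is $\ee^{\xi\|X_i\|}$. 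Here is where the hypothesis imported from Corollary \ref{FLDPc} enters: the condition $\Ex[\sup_{s\in\N}\max\{0,\log\int_\rew\ee^{\xi\|x\|-Ms}\lambda_{\bdot}(\dd x|s)\}]<+\infty$ says precisely that, up to an $\ee^{Ms}$ factor absorbed by the waiting time, the reward has a uniformly (in disorder, after averaging) finite exponential moment. Feeding this into the conditional-independence/renewal decomposition \eqref{start_0} and iterating, one gets a supermultiplicative-type bound on $E_\omega[\ee^{\xi\|W_t\|+H_{\omega,t}}\mathds{1}_{\{\cdot\}}]$ whose exponential growth rate is bounded above by a finite constant depending on $\xi$, $M$, and the tail constant $\ell$; by Kingman's subadditive ergodic theorem $(\star)$ applied to the associated subadditive sequence $F_t:=\log E_\omega[\ee^{\xi\|W_t\|+H_{\omega,t}}\mathds{1}_{\{t\in\mathcal{T}\}}]$ (checking the integrability $\Ex[\max\{0,F_{t_o}\}]<+\infty$ via Assumptions \ref{ass1}--\ref{ass3} together with the new hypothesis), this limsup is finite and deterministic $\prob$-a.e., which is all that is needed. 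Choosing $\xi$ from the hypothesis and then $\rho$ large makes the right-hand side of the Chebyshev bound arbitrarily negative, giving exponential tightness. One technical point: the constrained indicator $\mathds{1}_{\{t\in\mathcal{T}\}}$ is absent in $\nu_{\omega,t}$ (cf.\ $\mu_{\omega,t}$), so one must sum over the position of the last renewal before $t$ and use the tail condition \eqref{Ptailcond} to control the leftover increment $S_1>t-T_{N_t}$; this costs at most a factor whose exponential rate is governed by $\ell\le0$ and is harmless.

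The main obstacle I anticipate is not the soft part (promotion of weak to full LDP, which is standard once tightness is known) but verifying the exponential moment bound on $\|W_t\|$ uniformly in the disorder with the correct growth rate — in particular, correctly coupling the $\ee^{\xi\|X_i\|}$ weights with the $\ee^{-Ms}$ discount against the waiting times $S_i$ so that the extra mass $\sum_i(M\,S_i)=M\,T_{N_t}\le Mt$ contributes only a bounded-slope correction, and then confirming that the resulting subadditive functional satisfies the integrability hypothesis of $(\star)$. A secondary subtlety is that when $\ell>-\infty$ one should make sure the a.e.\ statement is genuinely uniform — but since exponential tightness only requires a limsup bound for each fixed $a$ (and we may take a countable sequence $a\uparrow\infty$, and countably many $\xi,\rho$ rationals), no separability of $\rew^\star$ beyond what Theorem \ref{WLDPfree} already assumes is needed here, because $\rew$ is finite-dimensional and the relevant functionals $\varphi$ can be chosen from a countable dense set of directions.
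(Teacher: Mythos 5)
Your proposal matches the paper's proof: the full LDP is obtained by combining the weak LDP of Theorem \ref{WLDPfree} (applicable in both cases of $\ell$ because finite-dimensionality of $\rew$ makes $\rew^\star$ separable) with exponential tightness of $\{\nu_{\omega,t}\}_{t\in\N}$, which the paper establishes exactly as you outline — decomposing over the position of the last renewal via \eqref{mu2nu}, applying a Chernoff-type bound to $\sum_{i=1}^{N_\tau}\|X_i\|$ with the exponent $\xi$ and the discount $\ee^{-Ms}$ from the hypothesis of Corollary \ref{FLDPc} absorbed into a factor $\ee^{Mt}$, and controlling the resulting additive functional by Birkhoff's ergodic theorem. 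The only (harmless) differences are that the paper bounds the leftover tail $P_{f^\tau\omega}[S_1>t-\tau]$ simply by $1$, so \eqref{Ptailcond} plays no role in the tightness estimate, and it needs no appeal to Kingman's theorem since the exponential moment is bounded pathwise by an additive functional.
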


\begin{remark}
{\rm Since $\lim_{t\uparrow\infty}\frac{1}{t}\log\nu_{\omega,t}(\rew)=z_\ell(0)$ $\pae$ with $z_\ell(0)$ finite, Theorem \ref{WLDPfree} and Corollary \ref{FLDPfree} establish quenched LDPs with rate function $I_\ell+z_\ell(0)$ for the total reward with respect to the pinning model.}
\end{remark}


\subsection{Discussion}
\label{sec:discussion} 


\paragraph{Variational free energy.}

Assumption \ref{ass1} about the waiting-time distribution formulates a notion, suitable for the exponential scale of LDPs, of aperiodic probability mass function in a context with disorder.  Assumption \ref{ass3} on the potential is motivated by and applies to the polymer pinning model, discussed below in some detail, and its generalisations, such as the spatially extended pinning model \cite{caravenna2021}. We note that a homogeneous version, i.e., without disorder, of these assumptions is already present in \cite{zamparo2021}, where LDPs for the total reward are established within the framework of homogeneous pinning models.  According to $(ii)$ of Proposition \ref{Zlim} with $\varphi=0$, Assumptions \ref{ass1} and \ref{ass3} suffice to obtain the following limit, which shows that $z(0)$ is the quenched free energy of the (constrained)
pinning model:
\begin{equation*}
\lim_{t\uparrow\infty}\frac{1}{t}\log\mu_{\omega,t}(\rew)=\lim_{t\uparrow\infty}\frac{1}{t}\log E_\omega\big[\mathds{1}_{\{t\in\mathcal{T}\}}\ee^{H_{\omega,t}}\big]=z(0)~\pae.
\end{equation*}
We stress that the instance $\varphi=0$ puts the reward probability measures $\lambda_\omega(\bdot|s)$ out of the picture.  One of the services of Assumptions \ref{ass3} is to ensure that $z(0)$ is finite through the requirement $\Ex[\sup_{s\in\N}\max\{0, v_{\bdot}(s)-\eta s\}]<+\infty$ for some $\eta\ge 0$.

Our variational definition of the function $z$ based on (\ref{def:var_formula}) turns out to be a variational formula for the quenched free energy: $z(0)=\inf\{\zeta\in\Rl\colon\,\Upsilon_0(\zeta)\le0\}$ with
\begin{equation*}
\Upsilon_0(\zeta):= \infd_{R\in\mathcal{R}_+}\prob\,\mbox{-}\esssup_{\omega\in\Omega} \bigg\{\log E_\omega\Big[
\ee^{v_\omega(S_1)-\zeta S_1+R(f^{S_1}\omega)-R(\omega)}
\mathds{1}_{\{S_1<\infty\}}\Big]\bigg\}. 
\end{equation*}
It is interesting to note that the quantity $\Upsilon_0(\zeta)$ can itself be interpreted as the quenched free energy of a random walk. In fact, supposing for a moment that the waiting-time distribution $p_\omega$ has bounded support to make contact with the existing literature, $\Upsilon_0(\zeta)$ is the quenched free energy for the random walk $\{T_i\}_{i\in\N_0}$ of renewal times in a random environment when $v_\omega=0$ and $\zeta=0$ \cite{rosenbluth2006,yilmaz2009_1,rassoulagha2013}, or the quenched free energy for the random walk $\{T_i\}_{i\in\N_0}$ in the random potential $\omega\mapsto v_\omega-\zeta\mathrm{id}$ when $p_\omega$ is independent of $\omega$ \cite{rassoulagha2013,georgiou2016,cometsbook,rassoulagha2017_1}. These connections, together with the findings of \cite{rassoulagha2017_1}, suggest that the formula for $\Upsilon_0(\zeta)$ does not always have a minimiser over $\mathcal{R}_+$. Recasting this expression in terms of centered cocycles could lead to a minimiser, as shown for a class of random walks in random potentials \cite{georgiou2016,cometsbook,rassoulagha2017_1}. This issue, which we leave for future work, may open a new perspective in the study of the random pinning model.


\paragraph{Reward laws with disorder.}

Assumption \ref{ass2} on the distribution of rewards raises a new problem, namely, that of incorporating generic rewards into a renewal model with disorder. We are not aware of seeing this in similar or other contexts. The assumption is always verified by probability measures $\lambda_\omega(\bdot|s)$ that do not depend on the disorder $\omega$. In this case we can take $r_{\omega,s}=0$. In fact, for any probability measure $\lambda$ on $\Brew$ there exists a point $x\in\rew$ such that $\lambda(B_{x,\delta})>0$ for all $\delta>0$. On the contrary, if for every $y\in\rew$ it were possible to find a number $\delta_y>0$ such that $\lambda(B_{y,\delta_y})=0$, then the open covering $\{B_{y,\delta_y}\}_{y \in \rew}$ of $\rew$ would contain a countable subcollection covering $\rew$ by Lindel\"of's lemma, with the consequence that $\lambda(\rew) = 0$ instead of $\lambda(\rew) = 1$. Furthermore, there exists a compact set $K\subset\rew$ such that $\lambda(K)>0$, because $\lambda$ is tight as $\rew$ is separable (see \cite{bogachevbook}, Theorem 7.1.7).

In the presence of disorder, what Assumption \ref{ass2} requires is basically that the random probability measures $\omega\mapsto\lambda_\omega(\bdot|s)$ satisfy the properties of homogeneous laws, in mean and on the exponential scale of LDPs, under possibly a shift by the vectors $r_{\omega,s}$. The points $r_{\omega,s}$ are incorporated in the theory to account for a disorder-dependent deterministic component of rewards, which becomes dominant in the limiting case $\lambda_\omega(\bdot|s)=\delta_{r_{\omega,s}}$, with $\delta_x$ the Dirac measure centered at $x$. This limiting case allows, for instance, to study the large deviations of the Hamiltonian $H_{\omega,t}$, which is the total reward $W_t$ in a model where $\lambda_\omega(\bdot|s)=\delta_{v_\omega(s)}$. The vectors $r_{\omega,s}$ are required not to vary too much, in particular, to lie in a common finite-dimensional subspace $\mathcal{V}$. We make use of this assumption, together with the hypothesis $\Ex[\sup_{s\in\N}\max\{0,\|r_{\bdot,s}\|-\eta s\}]<+\infty$ for some $\eta\ge 0$, to ensure that the disorder-dependent deterministic component $\frac{1}{t}\sum_{i=1}^{N_t}r_{f^{T_{i-1}}\omega,S_i}$ of
the scaled total reward is in a compact set $\pae$. We stress that Assumption \ref{ass2} simplifies when the reward space $\rew$ is finite-dimensional, as in this case $\mathcal{V}$ can be taken equal to $\rew$ and the compact sets $K_s$ can be taken equal to the closure of an open ball.


\paragraph{Difficulties and open problems.}

Since Assumption \ref{ass2} allows for any homogeneous reward law, our LDPs contain as a particular case the LDPs established in \cite{zamparo2021} under optimal hypotheses for homogeneous pinning models. Our variational definition of the function $z$ generalises to a setting with disorder the variational definition given in \cite{zamparo2021} for the same function. We note that, in the homogeneous setting, the subadditivity properties of renewal models allow among others to extend the large deviation upper bounds from compact sets to open and closed convex sets under the same assumptions of the weak LDPs \cite{zamparo2021}. In the presence of disorder, our main assumptions and methods appear to be unable to reach the same result, because we exploit subadditivity properties after an approximation argument, as explained in Section \ref{sec:WLDPc}. We also note that, in the homogeneous setting, conditions for the full LDP to hold for infinite-dimensional rewards are known \cite{zamparo2023}. Such conditions come from large deviation theory of sums of i.i.d.\ random variables and are formulated in terms of exponential moments.  We leave the investigation of these conditions to models with disorder as an open problem.


\subsection{Examples}
\label{sec:examples}


\paragraph{Compound Poisson processes in random environments.}

Suppose that, in an economic scenario $\omega\in\Omega$, a customer can arrive in a shop with probability $\rho_\omega$, and can spend an amount of money smaller than or equal to $x\in\Rl$ with probability $F_\omega(x)$. If the economic scenario evolves according to an ergodic measure-preserving transformation $f$, then the customer arrives at time $s\in\N$ with probability
\begin{equation*}
p_\omega(s):=\prod_{t=1}^{s-1}(1-\rho_{f^t\omega})\rho_{f^s\omega}
\end{equation*}
(empty products are equal to one), and spends an amount of money with probability distribution $\lambda_\omega(\bdot|s)$ over $\mathcal{B}(\Rl)$ defined, for $x\in\Rl$, by
\begin{equation*}
\lambda_\omega((-\infty,x]|s):=F_{f^s\omega}(x).
\end{equation*}
Note that we are implicitly assuming that $F_\omega(x)$ is non-decreasing and right-continuous with respect to $x$. We also assume that $\omega\mapsto\rho_\omega$ and $\omega\mapsto F_\omega(x)$ are measurable. In this way, when other customers arrive progressively after the first customer, all of them are described by the law $P_\omega$ associated with the waiting time distribution $p_\omega$ and the reward probability measures $\lambda_\omega(\bdot|s)$ over $\rew:=\Rl$. The total reward $W_t$ turns out to be the amount of money earned by the shop up to time $t$.

We are able to characterise the large deviations of $W_t$ via our theory for a pinning model with potential $v_\omega$ equal to zero. Assumption \ref{ass1} is met if $\Ex[\log p_{\bdot}(1)]>-\infty$, i.e., $\Ex[\log\rho_{\bdot}]>-\infty$, while Assumption \ref{ass3} holds trivially.  Since the reward probability measures $\lambda_\omega(\bdot|s)$ depend on $s$ and $\omega$ only through $f^s\omega$, Assumption \ref{ass2} is satisfied if there exists a measurable function $\omega\mapsto r_\omega$ taking values in $\Rl$ such that $\Ex[|r_{\bdot}|]<+\infty$ and $\Ex[\log\{F_{\bdot}(r_{\bdot}+\delta)-F_{\bdot}(r_{\bdot}-\delta)\}]>-\infty$ for all $\delta>0$. The hypothesis of Corollary \ref{FLDPc} becomes
$\Ex[\log\int_\Rl\ee^{\xi|x|}\dd F_{\bdot}(x)]<+\infty$ for some real number $\xi>0$. Thus, Theorem \ref{WLDPfree} and Corollary \ref{FLDPfree} give the following result after we show that the condition (\ref{Ptailcond}) is verified with $\ell=\Ex[\log(1-\rho_{\bdot})]$.

\begin{corollary}
\label{Poisson}
Suppose that $\Ex[\log\rho_{\bdot}]>-\infty$ and that there exists a real measurable function $\omega\mapsto r_\omega$ such that $\Ex[|r_{\bdot}|]<+\infty$ and
$\Ex[\log\{F_{\bdot}(r_{\bdot}+\delta)-F_{\bdot}(r_{\bdot}-\delta)\}]>-\infty$ for all $\delta>0$. Then the following hold:\\
$(i)$ $\pae$ the family $\{\nu_{\omega,t}\}_{t\in \N}$, associated with the total profit, satisfies the weak LDP with rate function $I_\ell$, where $\ell:=\Ex[\log(1-\rho_{\bdot})]\in[-\infty,0]$.\\
$(ii)$ If, moreover, $\Ex[\log\int_\Rl\ee^{\xi|x|}\dd F_{\bdot}(x)]<+\infty$ for some number $\xi>0$, then $\pae$ the family $\{\nu_{\omega,t}\}_{t\in \N}$ satisfies the full LDP with
good rate function $I_\ell$.
\end{corollary}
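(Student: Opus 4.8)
Most of the work needed to invoke Theorem~\ref{WLDPfree} and Corollary~\ref{FLDPfree} is already done in the paragraph preceding the statement: Assumption~\ref{ass1} holds since $\Ex[\log p_{\bdot}(1)]=\Ex[\log\rho_{\bdot}]>-\infty$, Assumption~\ref{ass3} is trivial as $v_\omega\equiv0$, Assumption~\ref{ass2} follows from the hypotheses on $r_\omega$ and $F_\omega$ because $\lambda_\omega(\bdot|s)$ depends on $(\omega,s)$ only through $f^s\omega$, and, for part $(ii)$, the moment hypothesis of Corollary~\ref{FLDPc} reduces to $\Ex[\log\int_\Rl\ee^{\xi|x|}\,\dd F_{\bdot}(x)]<+\infty$ (here $\rew^\star=\Rl$ is separable). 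So the one thing I would still have to establish is that \eqref{Ptailcond} holds $\pae$ with $\ell:=\Ex[\log(1-\rho_{\bdot})]\in[-\infty,0]$; everything else is then a direct appeal to Theorem~\ref{WLDPfree}$(ii)$ and Corollary~\ref{FLDPfree}.

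To do so I would first compute the waiting-time tail. Under $P_{f^t\omega}$ the event $\{S_1>s\}$ is ``no arrival in the first $s$ time-steps of the environment $f^t\omega$'', so a telescoping of $p_{f^t\omega}$ gives
\begin{equation*}
P_{f^t\omega}[S_1>s]=\prod_{u=1}^{s}\bigl(1-\rho_{f^{t+u}\omega}\bigr),
\qquad\text{hence}\qquad
\frac{1}{s}\log P_{f^t\omega}[S_1>s]=\frac{A_{t+s}(\omega)-A_{t}(\omega)}{s},
\end{equation*}
where $A_0:=0$ and $A_n(\omega):=\sum_{u=1}^{n}\log(1-\rho_{f^u\omega})$ is the Birkhoff sum of the non-positive measurable function $\omega\mapsto\log(1-\rho_\omega)$. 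Thus \eqref{Ptailcond} turns into a statement about the increments of $\{A_n\}_{n\in\N}$ over blocks of length $s$, to be controlled uniformly in the starting point $t$.

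The core argument would run as follows. Suppose first $\ell>-\infty$, i.e.\ $\log(1-\rho_{\bdot})\in L^1(\prob)$. By the ergodic theorem (equivalently, by $(\star)$ applied to $F_n:=A_n$), $\pae$ one has $A_n(\omega)/n\to\ell$; fixing such an $\omega$ and, given $\delta>0$, some $N=N(\omega,\delta)$ with $(\ell-\delta)n<A_n(\omega)<(\ell+\delta)n$ for $n\ge N$, one gets $\bigl|s^{-1}(A_{t+s}-A_{t})-\ell\bigr|<\delta(1+2t/s)$ for $t\ge N$ and $s\ge1$, so that
\begin{equation*}
\frac{A_{t+s}-A_{t}}{s}-\epsilon\frac{t}{s}<\ell+\delta+(2\delta-\epsilon)\frac{t}{s}\le\ell+\delta
\quad\text{and}\quad
\frac{A_{t+s}-A_{t}}{s}+\epsilon\frac{t}{s}>\ell-\delta
\end{equation*}
whenever $2\delta\le\epsilon$, while for each of the finitely many $t<N$ one has $s^{-1}(A_{t+s}-A_{t})\to\ell$ as $s\to\infty$; taking $\sup_t$ or $\inf_t$, then $\limsup_s$ or $\liminf_s$, and finally $\delta\downarrow0$ then $\epsilon\downarrow0$ yields both lines of \eqref{Ptailcond} with $\ell$ on the right. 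When $\ell=-\infty$ the second line of \eqref{Ptailcond} is vacuous, and for the first I would truncate: $g_M:=\max\{\log(1-\rho_{\bdot}),-M\}$ lies in $L^1(\prob)$, one has $A_{t+s}-A_t\le\sum_{u=t+1}^{t+s}g_M(f^u\omega)$ pointwise, and $\Ex[g_M]\downarrow-\infty$ as $M\uparrow\infty$ by monotone convergence; applying the case $\ell>-\infty$ to each $g_M$ bounds $\limsup_s\sup_t\{s^{-1}(A_{t+s}-A_t)-\epsilon t/s\}$ by $\Ex[g_M]$ for every $M$, so this $\limsup$ equals $-\infty$, as required.

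I expect the one genuinely delicate point to be the uniformity in $t\in\N_0$ forced by the $\sup_t$ and $\inf_t$: the ergodic theorem gives $A_s/s\to\ell$ and, for each fixed $t$, $(A_{t+s}-A_t)/s\to\ell$, but not the uniform-in-$t$ behaviour. This is precisely what the $\mp\epsilon\,t/s$ slack in \eqref{Ptailcond} is designed to absorb, the error of an ergodic average over the block $(t,t+s]$ being of order $t/s$; apart from that, the only bookkeeping is the separate elementary treatment of the finitely many small $t$ and, in the degenerate regime $\ell=-\infty$, the truncation step.
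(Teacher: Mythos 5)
Your proposal is correct and follows essentially the same route as the paper: the telescoping identity $\log P_{f^t\omega}[S_1>s]=A_{t+s}(\omega)-A_t(\omega)$ for the Birkhoff sums of $\log(1-\rho_{\bdot})$, Birkhoff's ergodic theorem to get the two-sided linear bounds when $\ell>-\infty$ (the paper absorbs the small-$t$ cases into a single uniform constant $k$ rather than treating them separately, a cosmetic difference), and truncation of $\log(1-\rho_{\bdot})$ from below plus monotone convergence when $\ell=-\infty$. The reduction of the remaining hypotheses to Theorem \ref{WLDPfree} and Corollary \ref{FLDPfree} is likewise handled exactly as in the text preceding the corollary.
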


\begin{proof}[Proof of Corollary \ref{Poisson}]
It remains to verify that the condition (\ref{Ptailcond}) holds with $\ell:=\Ex[\log(1-\rho_{\bdot})]$. The waiting-time tail probability is now, for $s\in\N_0$,
\begin{equation*}
P_\omega[S_1>s]=\prod_{i=1}^s(1-\rho_{f^i\omega}).
\end{equation*}

First we prove that (\ref{Ptailcond}) is fulfilled with the above $\ell$ when $\Ex[\log(1-\rho_{\bdot})]>-\infty$. To this aim, we note that Birkhoff's ergodic theorem allows us to find a set $\Omega_o\in\mathcal{F}$ with $\prob[\Omega_o]=1$ such that, for $\omega\in\Omega_o$, $\lim_{s\uparrow\infty}\frac{1}{s}\log P_\omega[S_1>s]=\ell$. Thus, for a given $\omega\in\Omega_o$ and for each $\epsilon>0$, there exists a constant $k$ such that the bounds $-k+(\ell-\epsilon/2)s\le \log P_\omega[S_1>s]\le k+(\ell+\epsilon/2)s$ are valid for all $s\in\N_0$. These bounds imply
\begin{align}
\nonumber
\log P_{f^t\omega}[S_1>s]&= \log P_\omega[S_1>t+s]-\log P_\omega[S_1>t]\\
\nonumber
&\le 2k+(\ell+\epsilon/2)(t+s)-(\ell-\epsilon/2)t\le 2k+(\ell+\epsilon) s+\epsilon t,
\end{align}
which shows that the first line of \eqref{Ptailcond} holds. A similar argument proves the second line.

In the case $\ell:=\Ex[\log(1-\rho_{\bdot})]=-\infty$, the second line of \eqref{Ptailcond} is trivial. Regarding the first, put $\rho_\omega^k:=\min\{\rho_\omega,1-1/k\}$ for $k\in\N$. Since $P_\omega[S_1>s]\le\prod_{i=1}^s(1-\rho_{f^i\omega}^k)$ for all $s$ and $k$, and $\Ex[\log(1-\rho_{\bdot}^k)]>-\infty$, we can conclude as before that there exists $\Omega_o\in\mathcal{F}$ with $\prob[\Omega_o]=1$ such that
\begin{equation*}
\adjustlimits\lim_{\epsilon\downarrow 0}\limsup_{s\uparrow\infty}\sup_{t\in\N_0}
\bigg\{\frac{1}{s}\log P_{f^t\omega}[S_1>s]-\epsilon \frac{t}{s}\bigg\}\le \Ex\big[\log(1-\rho_{\bdot}^k)\big]
\end{equation*}
for all $\omega\in\Omega_o$ and $k\in\N$. Letting $k\uparrow\infty$, we see the monotone convergence theorem demonstrates the first line of \eqref{Ptailcond}.
\end{proof}


\paragraph{Pinning of polymers at interfaces with disorder.}

We recover the standard pinning model \cite{giacominbook,denhollanderbook_2} as follows.  Suppose that the disorder $\omega:=\{\omega_\tau\}_{\tau\in\N_0}$ is a real sequence, which is sampled from a probability space $(\Omega,\mathcal{F},\prob)$ in such a way that the canonical projections $\omega\mapsto\omega_\tau$ form a sequence of i.i.d.\ random variables, and that the transformation $f$ is the left-shift acting on $\omega$. Next, suppose that the waiting-time distribution is independent of $\omega$ and is defined, for $s\in\N$, by
\begin{equation*}
p_\omega(s)=p(s):=\frac{L(s)}{s^{\alpha+1}}
\end{equation*}
with an index $\alpha\ge 0$ and a slowly varying function at infinity $L$. Finally, given parameters $h,\beta\in\Rl$, consider the potential $v_\omega(s):=h+\beta\omega_s$ for $s\in\N$, whose associated Hamiltonian turns out to be
\begin{equation*}
H_{\omega,t}=\sum_{i=1}^{N_t}(h+\beta\omega_{T_i}).
\end{equation*}
This model represents a heteropolymer consisting of $t$ monomers that interacts with a substrate through the monomers $T_1,\ldots,T_{N_t}$. Monomer $\tau$ has binding energy $h+\beta\omega_\tau$, and the spacing along the polymer chain of pinned units has a heavy-tailed distribution. If the last monomer is forced to be bound to the substrate, then the polymer is described by the constrained Gibbs measure $Q_{\omega,t}$, otherwise the law is the Gibbs measure $P_{\omega,t}$.

Our theory allows us to study the fluctuations of the number $N_t$ of pinned monomers. In fact, we have $W_t=N_t$ $P_\omega$-a.s.\ when $\rew:=\Rl$ and $\lambda_\omega(\bdot|s):=\delta_1$ for each $s\in\N$. Our theory also allows us to investigate the fluctuations of the polymer excursions between pinned units. To this aim, we may suppose that, e.g., $\rew$ is the Hilbert space of square-summable real functions on $\N$ endowed with the Euclidean inner product $\langle\bdot,{\bdot}\rangle$ and the corresponding norm $\|{\bdot}\|$. The space $\rew$ is separable, and so is its topological dual $\rew^\star$. Then, given an orthonormal basis $\{e_s\}_{s\in\N}$ of $\rew$, we can put $\lambda_\omega(\bdot|s):=\delta_{e_s}$ for $s\in\N$. In this framework, we have $W_t=\sum_{i=1}^{N_t}e_{S_i}=\sum_{s\in\N}\#_t(s)e_s$ $P_\omega$-a.s., where $\#_t(s)$ is a non-negative integer random variable that counts the number of times that a waiting time of size $s$ has occurred up to time $t$.

Both for the number of pinned monomers and the number of polymer excursions between pinned units, the reward probability measures do not depend on $\omega$, so that Assumption \ref{ass2} is automatically satisfied, as seen in Section \ref{sec:discussion}. Assumption \ref{ass1} is trivial because $p(s)>0$ for all sufficiently large $s$ by definition, and the condition in \eqref{Ptailcond} is met with $\ell:=0$.  Assumption \ref{ass3} requires that $\int_\Omega|\omega_0|\,\prob[\dd \omega]<+\infty$. Thus, Theorems \ref{WLDPc} and \ref{WLDPfree} provide quenched weak LDPs with respect to the constrained and non-constrained model, respectively. Regarding the number of pinned monomers, the LDP is actually full and the rate function is good according to Corollaries \ref{FLDPc} and \ref{FLDPfree}. We note, in general, that if $\int_\Omega|\omega_0|\,\prob[\dd \omega]<+\infty$ and rewards are bounded by a common constant $\rho>0$, i.e., $\lambda_\omega(B_{0,\rho}|s)=1$ for all $s\in\N$ and $\omega\in\Omega$ as in the above cases, then $z(\varphi)\ge0$ for all $\varphi\in\rew^\star$, so that $z_\ell=z$ and $I_\ell=J$. This follows from $(ii)$ of Proposition \ref{Zlim} since, for $\varphi\in\rew^\star$ and $t\in\N$, we have
\begin{align}
\nonumber
\int_\rew\ee^{t\varphi(w)}\mu_{\omega,t}(\dd w)&= E_\omega\Big[\mathds{1}_{\{t\in\mathcal{T}\}}\ee^{\varphi(W_t)+H_{\omega,t}}\Big]\\
\nonumber
&\ge E_\omega\Big[\mathds{1}_{\{T_1=t\}}\ee^{\varphi(W_t)+H_{\omega,t}}\Big]\\
\nonumber
&=p(t)\!\int_\rew\ee^{\varphi(x)+h+\beta\omega_t}\lambda_\omega(\dd x|t)\ge
p(t)\,\ee^{-\|\varphi\|\rho+h+\beta\omega_t}.
\end{align}
On the other hand, the hypothesis $\int_\Omega|\omega_0|\,\prob[\dd \omega]<+\infty$ implies $\lim_{t\uparrow\infty}\frac{\omega_t}{t}=0$ $\pae$, which can be easily justified by means of the strong law of large numbers. In conclusion, Theorems \ref{WLDPc} and \ref{WLDPfree} lead to the following results.

\begin{corollary}
Suppose that $\int_\Omega|\omega_0|\,\prob[\dd \omega]<+\infty$. Then the following hold:\\
$(i)$ $\pae$ the families $\{\mu_{\omega,t}\}_{t\in \N}$ and $\{\nu_{\omega,t}\}_{t\in \N}$, associated with the number of pinned monomers, satisfy the full LDP with good rate
function $J$.\\
$(ii)$ $\pae$ the families $\{\mu_{\omega,t}\}_{t\in \N}$ and $\{\nu_{\omega,t}\}_{t\in \N}$, associated with the polymer excursions between pinned units, satisfy the weak LDP with rate function $J$.
\end{corollary}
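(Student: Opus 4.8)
The plan is to read off the four large deviation principles from the main results of Section \ref{sec:mainresults}, specialised to the two choices of reward space described above: the finite-dimensional case $\rew=\Rl$ with $\lambda_\omega(\bdot|s)=\delta_1$, so that $W_t=N_t$ $P_\omega$-a.s.\ is the number of pinned monomers, and the infinite-dimensional case in which $\rew$ is the Hilbert space of square-summable real functions on $\N$, $\lambda_\omega(\bdot|s)=\delta_{e_s}$, and $W_t=\sum_{s\in\N}\#_t(s)\,e_s$ records the polymer excursions between pinned units. Since the bulk of the hypothesis-checking has already been carried out in the discussion preceding the statement, the proof consists in assembling those facts and adding two observations: that the exponential-moment condition of Corollary \ref{FLDPc} is trivially met in the first case, and that in both cases the truncation parameter equals $\ell=0$ and does not alter the rate function, i.e.\ $z_\ell=z$ and $I_\ell=J$.

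First I would confirm the standing hypotheses. Assumption \ref{ass1} holds because $p(s)=L(s)/s^{\alpha+1}>0$ for all large $s$, so two coprime integers in the eventual support of $p$ may be chosen; Assumption \ref{ass2} holds with $r_{\omega,s}=0$ since $\lambda_\omega(\bdot|s)$ does not depend on $\omega$, by the ball and tightness arguments recalled in Section \ref{sec:discussion}, and $\rew$ and $\rew^\star$ are separable in both cases; Assumption \ref{ass3} reduces to an integrability condition on $\omega_0$ because $v_\omega(s)=h+\beta\omega_s$ with $\{\omega_\tau\}_{\tau\in\N_0}$ i.i.d., and so is in force under $\int_\Omega|\omega_0|\,\prob[\dd\omega]<+\infty$. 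Moreover, since $p$ does not depend on $\omega$ and $P_\omega[S_1>s]=\sum_{s'>s}p(s')$ decays sub-exponentially, one has $\frac{1}{s}\log P_\omega[S_1>s]\to 0$; hence both the $\sup_{t\in\N_0}$ and the $\inf_{t\in\N_0}$ in \eqref{Ptailcond} are attained at $t=0$, and \eqref{Ptailcond} holds with $\ell=0$.

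Next I would establish that $z_\ell=z$. In both cases the rewards have norm bounded by $\rho=1$, so the chain of inequalities displayed just before the statement gives $\int_\rew\ee^{t\varphi(w)}\mu_{\omega,t}(\dd w)\ge p(t)\,\ee^{-\|\varphi\|+h+\beta\omega_t}$ for every $\varphi\in\rew^\star$ and $t\in\N$. Taking logarithms, dividing by $t$, and letting $t\uparrow\infty$, while using $\frac{1}{t}\log p(t)=\frac{\log L(t)}{t}-(\alpha+1)\frac{\log t}{t}\to 0$ together with $\frac{\omega_t}{t}\to 0$ $\pae$ (a consequence of $\int_\Omega|\omega_0|\,\prob[\dd\omega]<+\infty$ via the strong law of large numbers), Proposition \ref{Zlim}$(ii)$ yields $z(\varphi)\ge 0$ for every $\varphi\in\rew^\star$. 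Therefore $z_\ell=\max\{z,0\}=z$ and $I_\ell=J$, so the principles coming from Theorem \ref{WLDPfree} and Corollary \ref{FLDPfree} carry the same rate function $J$ as those coming from Theorem \ref{WLDPc} and Corollary \ref{FLDPc}.

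To conclude I would treat the two cases separately. For the number of pinned monomers, $\rew=\Rl$ is finite-dimensional and $\int_\rew\ee^{\xi\|x\|-Ms}\lambda_\omega(\dd x|s)=\ee^{\xi-Ms}$, whence $\sup_{s\in\N}\max\{0,\log\int_\rew\ee^{\xi\|x\|-Ms}\lambda_\omega(\dd x|s)\}=\sup_{s\in\N}\max\{0,\xi-Ms\}=0$ on choosing $M=\xi$; the hypothesis of Corollary \ref{FLDPc} is thus satisfied, and Corollaries \ref{FLDPc} and \ref{FLDPfree} give the full LDP with good rate function $J$ for $\{\mu_{\omega,t}\}_{t\in\N}$ and $\{\nu_{\omega,t}\}_{t\in\N}$, which is $(i)$. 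For the polymer excursions, $\rew$ is an infinite-dimensional separable Hilbert space, so Corollary \ref{FLDPc} is unavailable; nonetheless Theorem \ref{WLDPc} still gives the weak LDP for $\{\mu_{\omega,t}\}_{t\in\N}$ with rate function $J$, and since $\ell=0>-\infty$ and $\rew^\star$ is separable, Theorem \ref{WLDPfree}$(ii)$ gives the weak LDP for $\{\nu_{\omega,t}\}_{t\in\N}$ with rate function $I_\ell=J$, which is $(ii)$. The one step carrying genuine content is the identification $z_\ell=z$ in the previous paragraph; everything else is bookkeeping on top of the discussion preceding the statement.
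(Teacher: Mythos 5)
Your proposal is correct and takes essentially the same route as the paper, whose own proof of this corollary is exactly the discussion preceding it: verification of Assumptions \ref{ass1}--\ref{ass3} and of \eqref{Ptailcond} with $\ell=0$, the lower bound $\int_\rew\ee^{t\varphi(w)}\mu_{\omega,t}(\dd w)\ge p(t)\,\ee^{-\|\varphi\|\rho+h+\beta\omega_t}$ combined with Proposition \ref{Zlim}$(ii)$ and the strong law of large numbers to get $z\ge 0$, hence $z_\ell=z$ and $I_\ell=J$, followed by an appeal to Theorems \ref{WLDPc} and \ref{WLDPfree} and Corollaries \ref{FLDPc} and \ref{FLDPfree}. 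The only material you add is the explicit (trivial) check of the exponential-moment hypothesis of Corollary \ref{FLDPc} for the rewards $\delta_1$; like the paper, you do not actually verify the second half of Assumption \ref{ass3}, namely $\Ex[\sup_{s\in\N}\max\{0,h+\beta\omega_s-\eta s\}]<+\infty$, which for unbounded disorder requires slightly more than a first moment of $\omega_0$, but this gloss is inherited from the paper rather than introduced by you.
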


The example of the number of pinned units allows to appreciate the effect of disorder on rate functions. The following proposition makes use of a known smoothing effect of disorder in the pinning model (under more restrictive hypotheses on disorder than necessary for the sake of simplicity) to unveil some properties of the rate function $J$ of the number of pinned monomers. Note that, as $\frac{N_t}{t}\in[0,1]$ for all $t$, the large deviation lower bound for open sets implies $J(w)=+\infty$ for $w\notin[0,1]$. Put $u:=\frac{\sum_{s\in\N}p(s)}{\sum_{s\in\N}sp(s)}\in(0,1)$ if $\sum_{s\in\N}sp(s)<+\infty$ and $u:=0$ otherwise.

\begin{proposition}
\label{disorder_effect}
Assume that $p(s)>0$ for all $s\in\N$ and that either $\omega_0$ is bounded with full probability or is Gaussian distributed. Then the following hold:\\
$(i)$ If $\beta=0$ and $u>0$, then the rate function $J$ of the number of pinned monomers has an affine stretch on $(0,u]$, whereas it is strictly convex and infinitely differentiable on $(u,1)$.\\
$(ii)$ If either $\beta=0$ and $u=0$ or $\beta\ne 0$, then $J$ is strictly convex and infinitely differentiable on $(0,1)$.
\end{proposition}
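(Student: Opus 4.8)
The plan is to identify the rate function $J$ of the number of pinned monomers with an affine shift of a Legendre transform of the quenched free energy of the pinning model, and then to feed in known analytic properties of that free energy. Here $\rew=\Rl$, $\lambda_\omega(\bdot|s)=\delta_1$ (so $W_t=N_t$), and $H_{\omega,t}=\sum_{i=1}^{N_t}(h+\beta\omega_{T_i})$, so that for every $\varphi\in\rew^\star\cong\Rl$ and $t\in\N$,
\begin{equation*}
\int_\Rl\ee^{t\varphi w}\,\mu_{\omega,t}(\dd w)=E_\omega\Big[\mathds{1}_{\{t\in\mathcal{T}\}}\,\ee^{\sum_{i=1}^{N_t}(h+\varphi+\beta\omega_{T_i})}\Big],
\end{equation*}
which is exactly the constrained partition function of the polymer pinning model with field $h$ replaced by $h+\varphi$. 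Thus, by part $(ii)$ of Proposition \ref{Zlim}, $z(\varphi)=\FF(h+\varphi)$ for all $\varphi\in\Rl$, where throughout $\FF$ denotes the quenched free energy of the pinning model at the fixed value of $\beta$, viewed as a deterministic function of the field (constrained and free versions agree; see \cite{giacominbook,denhollanderbook_2}). Substituting $g:=h+\varphi$ gives
\begin{equation*}
J(w)=\sup_{\varphi\in\Rl}\big\{\varphi w-\FF(h+\varphi)\big\}=-hw+\FF^\star(w),\qquad\FF^\star(w):=\sup_{g\in\Rl}\big\{gw-\FF(g)\big\},
\end{equation*}
so that the behaviour of $J$ on $(0,1)$ is dual to that of $\FF$ on the localised phase, and it remains to analyse $\FF$.

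I would then record the properties of $\FF$ that are needed. (a) $\FF$ is finite, convex, non-decreasing and non-negative, with a critical point $h_c(\beta)\in\Rl$ such that $\FF=0$ on $(-\infty,h_c(\beta)]$ and $\FF>0$ on $(h_c(\beta),\infty)$. (b) $\FF$ is infinitely differentiable and strictly convex on the localised phase $(h_c(\beta),\infty)$. (c) $\FF(h)=h+O(1)$ as $h\uparrow\infty$ (the lower bound $\FF(h)\ge h+\log p(1)+\beta\,\Ex[\omega_{\bdot}]$ comes from restricting to $S_1=\cdots=S_t=1$ and the strong law, while $\FF'\le1$ always); hence, by (b), $\FF'$ is an increasing bijection from $(h_c(\beta),\infty)$ onto $(\ell_\beta,1)$, with $\ell_\beta:=\FF'(h_c(\beta)^+)\in[0,1)$. (d) $\ell_\beta=u$ when $\beta=0$, and $\ell_\beta=0$ when $\beta\ne0$. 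For $\beta=0$ all of (a)--(d) are elementary consequences of the explicit characterisation of $\FF=\FF(0,\bdot)$ as the solution of $\ee^h\sum_{s\in\N}p(s)\ee^{-\FF(h)s}=1$ for $h>h_c(0)=-\log\sum_{s\in\N}p(s)$: the analytic implicit function theorem gives real-analyticity and strict convexity on $(h_c(0),\infty)$ (the second derivative being a positive multiple of the variance of $S$ under the tilted law $\propto p(s)\ee^{-\FF(h)s}$, which is positive since $p(s)>0$ for all $s$), implicit differentiation gives $\FF'(h)=\big(\sum_{s}p(s)\ee^{-\FF(h)s}\big)/\big(\sum_{s}s\,p(s)\ee^{-\FF(h)s}\big)$, and the limit $h\downarrow h_c(0)$ (where $\FF(h)\downarrow0$) gives $\FF'(h_c(0)^+)=u$. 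For $\beta\ne0$, (a) and (c) are standard; (b) (the infinite differentiability and strict convexity of $\FF$ throughout the localised phase) is the known interior regularity of the disordered pinning free energy; and (d) follows from the Giacomin--Toninelli smoothing bound $0\le\FF(h_c(\beta)+t)\le C_\beta t^2$ as $t\downarrow0$, which holds precisely because $\omega_{\bdot}$ is bounded or Gaussian \cite{giacominbook} and forces $\FF'(h_c(\beta)^+)=0$.

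The conclusion then follows by convex duality. Since $\FF$ vanishes with zero slope on $(-\infty,h_c(\beta)]$ and, on $(h_c(\beta),\infty)$, is $C^\infty$ and strictly convex with $\FF'$ an increasing bijection onto $(\ell_\beta,1)$, one reads off: for $w\in[0,\ell_\beta]$ the supremum defining $\FF^\star(w)$ is attained at $g=h_c(\beta)$, so $\FF^\star(w)=h_c(\beta)\,w$ and $J(w)=(h_c(\beta)-h)\,w$ is affine there; for $w\in(\ell_\beta,1)$ it is attained at $g=(\FF')^{-1}(w)\in(h_c(\beta),\infty)$, so $\FF^\star$, and hence $J$, is $C^\infty$ on $(\ell_\beta,1)$ with $(\FF^\star)''=1/\big(\FF''\circ(\FF')^{-1}\big)>0$, i.e.\ strictly convex; and $J=+\infty$ off $[0,1]$ (also directly, since $\frac{N_t}{t}\in[0,1]$ and by the lower bound in Theorem \ref{WLDPc}). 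When $\beta=0$ and $u>0$ this is exactly $(i)$, with the affine stretch on $[0,u]\supseteq(0,u]$; when $\beta=0$ and $u=0$, or when $\beta\ne0$, we have $\ell_\beta=0$, the affine stretch degenerates to the single point $w=0$, and we obtain $(ii)$.

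The hard part is entirely in the external inputs on the \emph{disordered} ($\beta\ne0$) pinning free energy: the quadratic smoothing bound at $h_c(\beta)$ (this is precisely where the boundedness/Gaussianity hypothesis on $\omega_{\bdot}$ is used) and, used more quietly, the infinite differentiability and strict convexity of $\FF$ on the localised phase; everything else is the identification $z(\varphi)=\FF(h+\varphi)$ and routine Legendre-transform bookkeeping. I note that mere differentiability of $\FF$ on $(h_c(\beta),\infty)$ would already yield strict convexity of $J$ on $(\ell_\beta,1)$, but the infinite differentiability genuinely relies on the cited regularity.
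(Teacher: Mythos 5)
Your proposal is correct and follows essentially the same route as the paper: identify $z(\varphi)=\FF(h+\varphi)$ via Proposition \ref{Zlim}, reduce $J$ to an affine shift of the Legendre transform of the free energy, and import the known facts about $\FF$ (explicit characterisation and $\FF'(h_c^+)=u$ for $\beta=0$, the Giacomin--Toninelli smoothing giving $\FF'(h_c^+)=0$ for $\beta\ne0$, interior smoothness and strict convexity in the localised phase, and $\FF'\to1$ at infinity via the $S_1=\cdots=S_t=1$ restriction). The only cosmetic difference is that you deduce $\FF'(h_c^+)=0$ from the quadratic smoothing bound while the paper cites the derivative statement directly; the substance is identical.
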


Thus, for a model with $u>0$, the rate function $J$ of the number of pinned monomers has an affine stretch terminating at the point $u$ when disorder is absent, i.e., when $\beta=0$. But as soon as disorder comes into play, i.e., when $\beta\ne 0$, the rate function $J$ loses such an affine stretch. We note that the phenomenon of affine stretches in rate functions of homogeneous pinning models have been investigated in detail in \cite{zamparo2019,zamparo2021_1}.

\begin{proof}[Proof of Proposition \ref{disorder_effect}]
Let us denote the free energy of the constrained pinning model $z(0)$ by $\FF(h)$ to highlight the dependence on the parameter $h$.  Since we are considering a problem where $W_t=N_t$ $P_\omega$-a.s., so that $\int_\rew \ee^{t\varphi(w)}\mu_{\omega,t}(\dd w)=E_\omega[\mathds{1}_{\{t\in\mathcal{T}\}}\ee^{\varphi(W_t)+H_{\omega,t}}]=E_\omega[\mathds{1}_{\{t\in\mathcal{T}\}}\ee^{\varphi(1)N_t+H_{\omega,t}}]$, $(ii)$ of Proposition \ref{Zlim} shows that $z(\varphi)=\FF(h+k)$ with $k:=\varphi(1)$. It therefore follows that, for $w\in\Rl$,
\begin{equation}
J(w):=\sup_{\varphi\in\rew^\star}\big\{\varphi(w)-z(\varphi)\big\}=\sup_{k\in\Rl}\big\{wk-\FF(h+k)\big\}=\sup_{k\in\Rl}\big\{wk-\FF(k)\big\}-wh.
\label{J_free_energy_pinning}
\end{equation}

The overall features of the free energy under the hypotheses of the proposition on disorder have been characterised and are now needed. In order to make contact with the existing literature, we suppose without loss of generality that $\int_\Omega\omega_0\prob[\dd\omega]=0$ and $\int_\Omega\omega_0^2\prob[\dd\omega]=1$. To begin with, we recall that there exists a number $h_c$ such that $\FF(h)=0$ for $h\le h_c$ and $\FF(h)>0$ for $h>h_c$ (see \cite{giacominbook}, Chapter 5). The function that maps $h$ in $\FF(h)$ is convex on the real line and is infinitely differentiable and strictly convex on the open interval $(h_c,+\infty)$ (see \cite{giacomin2006_1}, Theorem 2.1 and \cite{giacomin2020}, Theorem B.1). In the absence of disorder, i.e., when $\beta=0$, we have that $h_c=-\log\sum_{s\in\N}p(s)$ and that $\FF(h)$ for $h>h_c$ is the unique positive real number that solves the equation $\sum_{s\in\N}p(s)\ee^{-\FF(h) s}=\ee^{-h}$ (see \cite{giacominbook}, Proposition 1.1). Then, when $\beta=0$, we can easily verify that $\lim_{h\downarrow h_c}\FF'(h)=u$. This limit is affected by disorder, which has a smoothing effect. In fact, when $\beta\ne 0$, it is known that $\lim_{h\downarrow h_c}\FF'(h)=0$ (see \cite{giacomin2006_2}, Theorem 2.1). In all cases, we find $\lim_{h\uparrow+\infty}\FF'(h)=1$ since $p(1)>0$. To prove this claim, we note that, for $h>0$, $t\in\N$, and $\omega\in\Omega$,
\begin{align}
\nonumber
\ee^{\sum_{i=1}^t(h+\beta\omega_i)+t\log p(1)}= E_\omega\Big[\mathds{1}_{\{S_1=1,\ldots,S_t=1\}}\ee^{H_{\omega,t}}\Big]
&\le  E_\omega\Big[\mathds{1}_{\{t\in\mathcal{T}\}}\ee^{H_{\omega,t}}\Big]\\
\nonumber
&\le \ee^{ht}E_\omega\Big[\mathds{1}_{\{t\in\mathcal{T}\}}\ee^{H_{\omega,t}-hN_t}\Big],
\end{align}
so that $h+\log p(1)\le \FF(h)\le h+F(0)$ by Proposition \ref{Zlim} and Birkhoff's ergodic theorem. The latter bounds imply $\lim_{h\uparrow+\infty}\frac{F(h)}{h}=1$. In this way, since convexity of $\FF$ entails $\frac{\FF(h)-\FF(0)}{h}\le\FF'(h)\le\frac{\FF(k)-\FF(h)}{k-h}$ for $\max\{0,h_c\}<h<k$, we get $\lim_{h\uparrow+\infty}\FF'(h)=1$ from here by letting $k\uparrow+\infty$ first and then $h\uparrow+\infty$.

Coming back to formula (\ref{J_free_energy_pinning}), the listed overall features of the free energy allow us to conclude that
\begin{equation*}
J(w)=\begin{cases}
w(h_c-h) & \mbox{if $\beta=0$ and $w\in[0,u]$}\\
w(k-h)-\FF(k)  & \mbox{if $\beta=0$ and $w\in(u,1)$ or $\beta\ne 0$ and $w\in(0,1)$}
\end{cases},
\end{equation*}
where $k>h_c$ is the unique real number that satisfies $\FF'(k)=w$. The implicit function theorem demonstrates that $J$ is strictly convex and infinitely differentiable on the interval $(u,1)$ for $\beta=0$ and on the interval $(0,1)$ for $\beta\ne0$.
\end{proof}


\paragraph{Returns of Markov chains in dynamic random environments.}

Let $\St$ be a finite set and let $\omega\mapsto K_\omega:=\{K_\omega(a,b)\}_{a,b\in\St}$ be a random stochastic matrix over $\St$, i.e., a random non-negative matrix that satisfies $\sum_{b\in\St}K_\omega(a,b)=1$ for all $a$ and $\omega$. If the environment $\omega$ evolves by successive applications of an ergodic measure-preserving transformation $f$, then the random matrix $\omega\mapsto K_\omega$ defines a Markov chain in a dynamic random environment, which at time $t$ jumps from state $a$ to state $b$ with probability $K_{f^t\omega}(a,b)$.

Given a distinguished state $c\in\St$, the returns of the Markov chain to the state $c$ define a renewal process in a random environment. Its waiting-time distribution reads, for $s\in\N$,
\begin{equation*}
p_\omega(s):=
\begin{cases}
K_\omega(a_0,a_s) & \mbox{if }s=1\\
\sum_{a_1\in\Sto}\cdots \sum_{a_{s-1}\in\Sto}\prod_{i=0}^{s-1}K_{f^i\omega}(a_i,a_{i+1})& \mbox{if }s\ge 2
\end{cases}
\end{equation*}
with $a_0=a_s:=c$. We can use our theory to investigate, e.g., the number of distinct states that the Markov chain explores during its excursions from the state $c$. To this aim, for each $s\ge 2$ such that $p_\omega(s)>0$, denoting by $\#\{a_1,\ldots,a_s\}$ the number of distinct elements in the collection $\{a_1,\ldots,a_s\}$, we introduce over $\rew:=\Rl$ the reward probability measure
\begin{equation*}
\lambda_\omega(\bdot|s):=\frac{1}{p_\omega(s)}\sum_{a_1\in\Sto}\cdots \sum_{a_{s-1}\in\Sto}
\delta_{\#\{a_1,\ldots,a_s\}}\prod_{i=0}^{s-1}K_{f^i\omega}(a_i,a_{i+1})
\end{equation*}
with $a_0=a_s:=c$. We put $\lambda_\omega(\bdot|1):=\delta_1$ if $p_\omega(1)>0$ and $\lambda_\omega(\bdot|s):=\delta_0$ whenever $p_\omega(s)=0$. Under the law $P_\omega$ associated with $p_\omega$ and $\lambda_\omega(\bdot|s)$, the total reward $W_t$ is almost surely the number of visited states in all excursions from the state $c$ up to time $t$.

Assumption \ref{ass1} immediately holds in the easy case $\Ex[\log K_{\bdot}(a,b)]>-\infty$ for all $a,b\in\St$, which gives $\Ex[\log p_{\bdot}(s)]>-\infty$ for every $s$.  Assumption \ref{ass3} is trivial, since there is no potential in this example. Assumption \ref{ass2} is verified as follows. Fix $s\in\N$ and, for $n\in\{0,\ldots,s\}$, denote by $\Omega_n$ the set of all $\omega\in\Omega$ with the property that $n$ is the smallest non-negative integer $i$ such that $\lambda_\omega(\{i\}|s)\ge \frac{1}{s+1}$. The sets $\Omega_0,\ldots,\Omega_s$ are measurable, because they inherit this property from the random stochastic matrix $\omega\mapsto K_\omega$, and are disjoint. Moreover, $\cup_{n=0}^s\Omega_n=\Omega$ since $\sum_{n=0}^s\lambda(\{n\}|s)=1$. With such sets, consider the measurable function $\omega\mapsto r_{\omega,s}$ that takes value $n$ over $\Omega_n$. This function leads to the fulfilment of Assumption \ref{ass2}, since $|r_{\omega,s}|\le s$ and $\lambda_\omega(\{r_{\omega,s}\}|s)\ge \frac{1}{s+1}$ for all $\omega$ by construction. Finally, we note that the hypothesis of Corollary \ref{FLDPc} is satisfied with, e.g., $\xi=1$ and $M=1$ since $\lambda_\omega([0,s]|s)=1$ for all $s$ and $\omega$. Thus, Theorem \ref{WLDPfree} and Corollary \ref{FLDPfree} give the following result after we show that the condition (\ref{Ptailcond}) holds with some $\ell$.

\begin{corollary}
\label{Markov_chain}
Assume that $\Ex[\log K_{\bdot}(a,b)]>-\infty$ for all $a,b\in\St$. Then there exists a finite number $\ell\le 0$ such that $\pae$ the family $\{\nu_{\omega,t}\}_{t\in \N}$, associated with the total number of visited states in the excursions from the distinguishable state $c$, satisfies the full LDP with good rate function $I_\ell$.
\end{corollary}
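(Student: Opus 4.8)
The plan is to apply Theorem~\ref{WLDPfree} together with Corollary~\ref{FLDPfree}, so essentially everything has already been reduced to checking the tail condition \eqref{Ptailcond} for an appropriate exponential tail constant $\ell$. The main task is therefore to produce a deterministic $\ell\in[-\infty,0]$ (in fact finite and $\le 0$) and show that $\pae$
\[
\adjustlimits\lim_{\epsilon\downarrow 0}\limsup_{s\uparrow\infty}\sup_{t\in\N_0}\bigg\{\frac{1}{s}\log P_{f^t\omega}[S_1>s]-\epsilon\frac{t}{s}\bigg\}\le\ell
\]
and the matching lower bound. The key observation is that $P_\omega[S_1>s]$ is, up to the contribution of the last step, a product of transition probabilities along paths that avoid $c$; more precisely, writing $\Pi_\omega(s)$ for the sub-stochastic matrix obtained from $K_\omega$ by deleting the row and column indexed by $c$, one has, for $s\ge 1$,
\[
P_\omega[S_1>s]=\sum_{a,b\in\Sto}\Big(\Pi_\omega(1)\Pi_{f\omega}(1)\cdots\Pi_{f^{s-1}\omega}(1)\Big)(a,b)\,K_{f^s\omega}(b,c)\ \text{-type expression},
\]
so that $P_\omega[S_1>s]$ is comparable (up to bounded multiplicative factors coming from the last transition and from summing over $O(1)$ states) to the operator norm of the random matrix product $\Pi_\omega(0,s):=\Pi_\omega\Pi_{f\omega}\cdots\Pi_{f^{s-1}\omega}$. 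This is a stationary ergodic product of nonnegative sub-stochastic matrices of fixed finite size $|\Sto|=|\St|-1$.

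\smallskip

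First I would invoke the Furstenberg--Kesten theorem (equivalently, Kingman's subadditive ergodic theorem applied to $F_s(\omega):=\log\|\Pi_\omega(0,s)\|$, which is subadditive because $\|\Pi_\omega(0,s+s')\|\le\|\Pi_\omega(0,s)\|\,\|\Pi_{f^s\omega}(0,s')\|$) to obtain a deterministic constant $\ell:=\inf_{s}\Ex[\frac1s\log\|\Pi_\bdot(0,s)\|]\in[-\infty,0]$ with $\lim_{s\uparrow\infty}\frac1s\log\|\Pi_\omega(0,s)\|=\ell$ $\pae$. The hypothesis $\Ex[\log K_\bdot(a,b)]>-\infty$ for all $a,b$ is used here both to guarantee $\Ex[\max\{0,F_1\}]<+\infty$ (in fact $F_1\le 0$) and, more importantly, to ensure $\ell>-\infty$: indeed $\|\Pi_\omega(0,s)\|\ge\prod_{i=0}^{s-1}K_{f^i\omega}(a^*,a^*)$ for any fixed $a^*\in\Sto$ whenever that diagonal entry is positive — and if no such diagonal entry is usable one instead bounds below by a product along a fixed cyclic path in $\Sto$ of bounded period, which is legitimate precisely because all the relevant $\log K_\bdot(a,b)$ are integrable; Birkhoff's theorem then gives $\frac1s\log\|\Pi_\omega(0,s)\|\ge\ell_0>-\infty$ eventually, forcing $\ell\ge\ell_0$. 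Since each $\Pi_\omega$ is sub-stochastic we also have $\|\Pi_\omega(0,s)\|\le C$ for a dimensional constant, hence $\ell\le 0$.

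\smallskip

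Next, to pass from the pointwise limit to the uniform-in-$t$ statement \eqref{Ptailcond}, I would argue exactly as in the proof of Corollary~\ref{Poisson}: fix $\omega$ in the full-probability set on which $\frac1s\log\|\Pi_\omega(0,s)\|\to\ell$; given $\epsilon>0$ pick $k=k(\omega,\epsilon)$ with $-k+(\ell-\epsilon/2)s\le\log\|\Pi_\omega(0,s)\|\le k+(\ell+\epsilon/2)s$ for all $s\ge 0$. Using the cocycle identity $\Pi_\omega(0,t+s)=\Pi_\omega(0,t)\,\Pi_{f^t\omega}(0,s)$ together with submultiplicativity in one direction and, for the reverse direction, the bound $\|\Pi_\omega(0,t+s)\|\ge c\,\|\Pi_\omega(0,t)\|\,\|\Pi_{f^t\omega}(0,s)\|$ valid for nonnegative matrices up to a dimensional constant $c>0$ (pick out one maximal entry on each side and a fixed path of length $O(1)$ of positive weight joining them — again integrability of $\log K_\bdot$ makes the joining weight nonzero eventually), one gets
\[
\log\|\Pi_{f^t\omega}(0,s)\|\le 2k+(\ell+\epsilon/2)(t+s)-(\ell-\epsilon/2)t+O(1)\le 2k'+(\ell+\epsilon)s+\epsilon t,
\]
uniformly in $t\in\N_0$, which is the first line of \eqref{Ptailcond}; the second line follows symmetrically. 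Finally, translating back from $\|\Pi_\omega(0,s)\|$ to $P_\omega[S_1>s]$ costs only a bounded multiplicative factor (the last transition $K_{f^s\omega}(\cdot,c)$ is bounded above by $1$, and bounded below in the Birkhoff-a.e.\ sense by integrability), which does not affect the $\frac1s\log$ limits. With \eqref{Ptailcond} verified, Theorem~\ref{WLDPfree}$(ii)$ gives the weak LDP, and since $\rew=\Rl$ is finite-dimensional and the hypothesis of Corollary~\ref{FLDPc} holds with $\xi=M=1$ (as noted, $\lambda_\omega([0,s]\,|\,s)=1$), Corollary~\ref{FLDPfree} upgrades it to the full LDP with good rate function $I_\ell$.

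\smallskip

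The main obstacle is the reverse (super-multiplicative) bound $\|\Pi_\omega(0,t+s)\|\gtrsim\|\Pi_\omega(0,t)\|\,\|\Pi_{f^t\omega}(0,s)\|$, since products of nonnegative matrices are only sub-multiplicative in general; making this work, and simultaneously showing $\ell>-\infty$, is where the integrability assumption $\Ex[\log K_\bdot(a,b)]>-\infty$ is genuinely needed — it is what lets one insert a fixed short bridging path of almost-surely-eventually-positive weight between the dominant entries, turning a two-sided estimate into a one-sided Birkhoff statement. Everything else is bookkeeping already carried out in the proofs of the previous corollaries.
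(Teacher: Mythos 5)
Your overall strategy (verify \eqref{Ptailcond} for a suitable finite $\ell$, then invoke Theorem \ref{WLDPfree} and Corollary \ref{FLDPfree}) matches the paper, and your identification of where the hypothesis $\Ex[\log K_{\bdot}(a,b)]>-\infty$ is needed is accurate. But the route you take through operator norms of the sub-stochastic products $\Pi_\omega(0,s)$ has a genuine gap at exactly the step you flag as the main obstacle. The inequality $\|\Pi_\omega(0,t+s)\|\ge c\,\|\Pi_\omega(0,t)\|\,\|\Pi_{f^t\omega}(0,s)\|$ with a \emph{dimensional} constant $c>0$ is false for general nonnegative matrices (take two rank-one matrices supported on orthogonal coordinates: the product vanishes while the norms do not). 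Your proposed repair by a bridging path does not rescue the statement as written, for two reasons: the bridge's weight is a product of entries of $K_{f^t\omega},K_{f^{t+1}\omega},\ldots$ and is therefore $\omega$- and $t$-dependent --- Birkhoff only gives a lower bound of order $\ee^{-\epsilon t}$, not a constant --- and inserting a bridge of length $r\ge1$ replaces the second factor by $\Pi_{f^{t+r}\omega}(0,s)$, which is not the matrix you need to control. Both defects can in principle be absorbed into the $\epsilon\frac{t}{s}$ slack of \eqref{Ptailcond}, but that absorption is the entire content of the uniformity-in-$t$ claim and is not carried out.

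The paper avoids this issue by working entrywise rather than with norms. Since $\Ex[\log K_{\bdot}(a,b)]>-\infty$ forces all entries of the restricted matrices $M_\omega$ to be a.s.\ positive, Kingman's theorem for products of positive matrices (Kingman 1973, Theorem 5) applies and yields a single finite constant $\ell$ such that $\lim_{t\uparrow\infty}\frac{1}{t}\log M_\omega^t(a,b)=\ell$ for \emph{every} pair $(a,b)$, $\pae$. This two-sided entrywise control, $(\ell-\epsilon/3)t-k\le\log M^t_\omega(a,b)\le(\ell+\epsilon/3)t+k$ for all $a,b,t$, combined with the exact factorisation $\sum_{a}M^t_\omega(a_o,a)M^{s-1}_{f^t\omega}(a,b)=M^{t+s-1}_\omega(a_o,b)$, immediately gives both the upper and the lower bound on $\sum_a M^{s-1}_{f^t\omega}(a,b)$ uniformly in $t$ --- no reverse supermultiplicativity and no bridging path are needed. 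If you want to keep your norm-based formulation, the honest fix is to first establish the entrywise statement (which is what the cited Kingman theorem provides) and deduce the norm estimates from it, rather than the other way around. A minor additional point: $P_\omega[S_1>s]$ contains no ``last transition back to $c$''; it is $\sum_{a,b\in\Sto}K_\omega(c,a)M^{s-1}_\omega(a,b)$, i.e.\ one entrance step into $\Sto$ followed by $s-1$ steps inside $\Sto$, and the entrance factor is handled separately by Birkhoff's theorem.
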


\begin{proof}[Proof of Corollary \ref{Markov_chain}]
It remains to verify that condition (\ref{Ptailcond}) holds with a finite number $\ell$.  The waiting-time tail probability reads, for $s\in\N$,
\begin{equation}
P_\omega[S_1>s]= \sum_{a_1\in\Sto}\cdots \sum_{a_s\in\Sto}K_\omega(a_0,a_1)\cdots K_{f^{s-1}\omega}(a_{s-1},a_s)
\label{tail_Markov}
\end{equation}
with $a_0:=c$. 

Denoting by $M_\omega$ the restriction of $K_\omega$ to $\Sto$ and putting $M_\omega^t:=M_{f\omega}\cdots M_{f^t\omega}$ for $t\in\N$ with $M_\omega^0$ the identity matrix, we can recast (\ref{tail_Markov}) as
\begin{equation*}
P_\omega[S_1>s]=\sum_{a\in\Sto}\sum_{b\in\Sto} K_\omega(c,a)M^{s-1}_\omega(a,b).
\end{equation*}
Since $\Ex[\log M_{\bdot}(a,b)]=\Ex[\log K_{\bdot}(a,b)]>-\infty$ for all $a,b\in\Sto$, there exist a finite number $\ell$ and a set $\Omega_o\in\mathcal{F}$ with $\prob[\Omega_o]=1$ such that $\lim_{t\uparrow\infty}\frac{1}{t}\log M_\omega^t(a,b)=\ell$ for all $a,b\in\Sto$ and $\omega\in\Omega_o$ (see \cite{kingman1973}, Theorem 5). We can choose $\Omega_o$ in order to also have $\lim_{t\uparrow\infty}\frac{1}{t}\log K_{f^t\omega}(c,a)=0$ for all $a\in\Sto$ and $\omega\in\Omega_o$. A simple way to justify this claim involves Birkhoff's ergodic theorem. Let us show that (\ref{Ptailcond}) holds for all $\omega\in\Omega_o$ with such $\ell$.

Pick $\omega\in\Omega_o$ and $\epsilon>0$. By construction, there exists a constant $k$ such that $(\ell-\epsilon/3)t-k\le\log M_\omega^t(a,b)\le (\ell+\epsilon/3)t+k$ and $\log
K_{f^t\omega}(c,a)\ge -(\epsilon/3) t-k$ for all $a,b\in\Sto$ and $t\in\N$. The latter can be extended to also include the case $t=0$. In this way, given any $a_o\in\Sto$, the identity
\begin{equation*}
\sum_{a\in\Sto}M^t_\omega(a_o,a)M^{s-1}_{f^t\omega}(a,b)=M^{t+s-1}_\omega(a_o,b)
\end{equation*}
shows that, for $s\in\N$, $t\in\N_0$, and $b\in\Sto$,
\begin{equation*} 
\ee^{(\ell-\epsilon)(s-1)-(2\epsilon/3) t-2k}\le\sum_{a\in\Sto}M^{s-1}_{f^t\omega}(a,b)\le |\St|\ee^{(\ell+\epsilon)s+\epsilon t+2k}.
\end{equation*}
We also have, for $t\in\N_0$ and $a\in\Sto$,
\begin{equation*}
\ee^{ -(\epsilon/3) t-k}\le K_{f^t\omega}(c,a)\le 1.
\end{equation*}
We note that
\begin{equation*}
P_{f^t\omega}[S_1>s]=\sum_{a\in\Sto}\sum_{b\in\Sto} K_{f^t\omega}(c,a)M^{s-1}_{f^t\omega}(a,b)\le |\St|^2\ee^{(\ell+\epsilon)s+\epsilon t+2k}
\end{equation*}
and
\begin{equation*}
P_{f^t\omega}[S_1>s]\ge  \ee^{(\ell-\epsilon)(s-1)-\epsilon t-3k}
\end{equation*}
for all $s\in\N$ and $t\in\N_0$. These bounds show that
\begin{equation*}
\limsup_{s\uparrow\infty}\sup_{t\in\N_0}\bigg\{\frac{1}{s}\log P_{f^t\omega}[S_1>s]-\epsilon \frac{t}{s}\bigg\}\le \ell+\epsilon
\end{equation*}
and
\begin{equation*}
\liminf_{s\uparrow\infty}\inf_{t\in\N_0}\bigg\{\frac{1}{s}\log P_{f^t\omega}[S_1>s]+\epsilon \frac{t}{s}\bigg\}\ge \ell-\epsilon.
\end{equation*}
The arbitrariness of $\epsilon$ proves (\ref{Ptailcond}).
\end{proof}


\section{Constrained LDPs}
\label{proofs_constrainedmodel}


In this section we prove Proposition \ref{Zlim}, Theorem \ref{WLDPc}, and Corollary \ref{FLDPc}. Section \ref{sec:super} introduces some fundamental limits by combining supermultiplicativity properties of the constrained pinning model with Kingman's subadditive ergodic theorem. Section \ref{sec:ren_eq} verifies Proposition \ref{Zlim}. In Section \ref{sec:WLDPc} we prove the quenched weak LDP for the family of measures $\{\mu_{\omega,t}\}_{t\in\N}$, identifying the rate function in Section \ref{sec:Legendre}. This demonstrates Theorem \ref{WLDPc}. Finally, in Section \ref{sec:FLDPc} we obtain the quenched full LDP of Corollary \ref{FLDPc}.


\subsection{Supermultiplicativity}
\label{sec:super}


\paragraph{Supermultiplicativity.}

The identity in (\ref{start_0}) leads to a supermultiplicativity property for the random measures $\omega\mapsto\mu_{\omega,t}$ defined by (\ref{mudef}), as shown in the following lemma.  For $A,A'\subseteq\rew$ and $\alpha\in\Rl$, define $\alpha A:=\{\alpha w:w\in A\}$ and $A+A':=\{w+w':w\in A,\, w'\in A'\}$.
\begin{lemma}
\label{super_mu}
For $t,t'\in \N$ and $A,A'\in\Brew$, $\mu_{\omega,t+t'}(\alpha A+\alpha' A') \ge\mu_{\omega,t}(A)\mu_{f^t\omega,t'}(A')$ with $\alpha:=\frac{t}{t+t'}$ and $\alpha':=\frac{t'}{t+t'}$. In particular, $\mu_{\omega,t+t'}(A) \ge\mu_{\omega,t}(A)\mu_{f^t\omega,t'}(A)$ if $A$ is convex.
\end{lemma}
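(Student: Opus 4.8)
The plan is to unfold the definition of $\mu_{\omega,t+t'}$ and split the event according to the value of $N_t$, i.e., according to whether a renewal occurs exactly at time $t$ after having produced a given partial reward. Recall from \eqref{mudef} that
\begin{equation*}
\mu_{\omega,t+t'}(\alpha A+\alpha' A')=E_\omega\bigg[\mathds{1}_{\big\{\frac{W_{t+t'}}{t+t'}\in\alpha A+\alpha' A',\,t+t'\in\mathcal{T}\big\}}\,\ee^{H_{\omega,t+t'}}\bigg].
\end{equation*}
On the event $\{t\in\mathcal{T}\}$ there is some index $n$ with $T_n=t$, and on that event $T_{n+n'}=t+t'$ for suitable $n'$ forces $t+t'\in\mathcal{T}$; moreover $W_{t+t'}=W_t+(W_{t+t'}-W_t)$ and $H_{\omega,t+t'}=H_{\omega,t}+(H_{\omega,t+t'}-H_{\omega,t})$ split additively along this renewal. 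So the first step is to bound $\mu_{\omega,t+t'}$ from below by restricting the expectation to the sub-event $\{t\in\mathcal{T}\}\cap\{t+t'\in\mathcal{T}\}$, then conditioning on $T_n=t$ for each $n$ and summing over $n$.

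The second step is to invoke the conditional-independence identity \eqref{start_0}: conditionally on $T_n=t$, the ``past'' increments $S_1,\ldots,S_n$ (and $X_1,\ldots,X_n$) are independent of a fresh copy of the renewal-reward process run in the shifted environment $f^t\omega$. This lets me factor the conditioned expectation into a product of an $f^t\omega$-expectation over the first $t'$ time units and the original $\omega$-expectation over the first $t$ time units. Concretely, on the past-part I retain the indicator $\{\frac{W_t}{t}\in A\}$ (so that $\frac{W_t}{t+t'}=\alpha\frac{W_t}{t}\in\alpha A$) together with $\{t\in\mathcal{T}\}$ and the factor $\ee^{H_{\omega,t}}$, recovering exactly $\mu_{\omega,t}(A)$; on the future-part I retain $\{\frac{W_{t'}}{t'}\in A'\}$, $\{t'\in\mathcal{T}\}$ and $\ee^{H_{f^t\omega,t'}}$, recovering $\mu_{f^t\omega,t'}(A')$. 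Since $\alpha\frac{W_t}{t}+\alpha'\frac{W_{t'}\circ\theta}{t'}\in\alpha A+\alpha' A'$ under these two indicators, the joint indicator $\{\frac{W_{t+t'}}{t+t'}\in\alpha A+\alpha' A'\}$ is implied, so dropping it only decreases the expression. This yields the claimed inequality $\mu_{\omega,t+t'}(\alpha A+\alpha' A')\ge\mu_{\omega,t}(A)\,\mu_{f^t\omega,t'}(A')$. The final clause is then immediate: if $A$ is convex then $\alpha A+\alpha' A'=A$ when $A'=A$, since $\alpha+\alpha'=1$ and $\alpha,\alpha'\ge 0$.

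The main obstacle, and the part that needs care rather than cleverness, is the bookkeeping in matching indicators and Hamiltonians across the conditioning: one must check that the decomposition $W_{t+t'}=W_t+W_{t'}\circ(\text{shift by }n)$ and the analogous splitting of $H_{\omega,t+t'}$ are valid \emph{on the event $T_n=t$}, using that $N_{t+t'}=N_t+(N_{t+t'}-N_t)$ there and that $H_{\omega,t+t'}=\sum_{i=1}^{N_{t+t'}}v_{f^{T_{i-1}}\omega}(S_i)$ splits because $T_{i-1}=t+(\text{renewal time in }f^t\omega)$ for $i>n$. There is also the mild subtlety, already flagged after \eqref{start_0}, of the convention $\frac00:=0$ when $P_\omega[T_n=t]=0$; such terms contribute zero on both sides and can be discarded. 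Modulo this routine verification, the supermultiplicativity is a direct consequence of \eqref{start_0}.
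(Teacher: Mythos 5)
Your proposal is correct and follows essentially the same route as the paper's proof: lower-bound $\mu_{\omega,t+t'}(\alpha A+\alpha'A')$ by inserting the extra constraint $t\in\mathcal{T}$ together with the indicators of $\{\frac{W_t}{t}\in A\}$ and $\{\frac{W_{t+t'}-W_t}{t'}\in A'\}$ (which jointly imply $\frac{W_{t+t'}}{t+t'}\in\alpha A+\alpha'A'$ by the convex-combination identity), then sum over $n,n'$ and factorise via the conditional-independence identity \eqref{start_0}, matching the split of $W_{t+t'}$ and $H_{\omega,t+t'}$ at the renewal $T_n=t$. The bookkeeping you flag is exactly what the paper carries out, so no further ideas are needed.
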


\begin{proof}[Proof of Lemma \ref{super_mu}]
Writing $\frac{W_{t+t'}}{t+t'} = \alpha \frac{W_t}{t}+\alpha'\frac{W_{t+t'}-W_t}{t'}$, we see that $\frac{W_{t+t'}}{t+t'}\in \alpha A+\alpha' A'$ whenever $\frac{W_t}{t} \in A$ and $\frac{W_{t+t'}-W_t}{t'}\in A'$. It follows that
\begin{align}
\nonumber
\mu_{\omega,t+t'}(\alpha A+\alpha' A')
&:=E_\omega\bigg[\mathds{1}_{\big\{\frac{W_{t+t'}}{t+t'} \in \alpha A+\alpha' A',\,t+t'\in\mathcal{T} \big\}}\ee^{H_{\omega,t+t'}}\bigg]\\
\nonumber
&\ge E_\omega\bigg[\mathds{1}_{\big\{\frac{W_t}{t} \in A,\,\frac{W_{t+t'}-W_t}{t'}\in A',\,t+t'\in\mathcal{T}\big\}}\ee^{H_{\omega,t+t'}}\bigg]\\
\nonumber
&\ge E_\omega\bigg[\mathds{1}_{\big\{\frac{W_t}{t} \in A,\,t\in\mathcal{T},\,
\frac{W_{t+t'}-W_t}{t'}\in A',\,t+t'\in\mathcal{T}\big\}}\ee^{H_{\omega,t+t'}}\bigg],
\end{align}
where the last bound accounts for the constraint $t\in\mathcal{T}$. This constraint gives
\begin{align}
\nonumber
E_\omega
&\bigg[\mathds{1}_{\big\{\frac{W_t}{t} \in A,\,t\in\mathcal{T},\,\frac{W_{t+t'}-W_t}{t'}\in A',\,t+t'\in\mathcal{T}\big\}}\ee^{H_{\omega,t+t'}}\bigg]\\
\nonumber
&=\sum_{n\in\N}\sum_{n'\in\N}E_\omega\bigg[\mathds{1}_{\big\{\frac{W_t}{t} \in A,\,T_n=t \big\}}\ee^{H_{\omega,t}}
\mathds{1}_{\big\{\frac{W_{t+t'}-W_t}{t'}\in A',\,T_{n+n'}=t+t'\big\}}\ee^{H_{\omega,t+t'}-H_{\omega,t}}\bigg]\\
\nonumber
&= E_\omega\bigg[\mathds{1}_{\big\{\frac{W_t}{t} \in A,\,t\in\mathcal{T} \big\}}\ee^{H_{\omega,t}}\bigg]
E_{f^t\omega}\bigg[\mathds{1}_{\big\{\frac{W_{t'}}{t'}\in A',\,t'\in\mathcal{T}\big\}}
\ee^{H_{f^t\omega,t'}}\bigg]=\mu_{\omega,t}(A)\mu_{f^t\omega,t'}(A').
\end{align}
In fact, if $T_n=t$ and $T_{n+n'}=t+t'$, then we have $W_t=\sum_{i=1}^n X_i$, $H_{\omega,t}=\sum_{i=1}^n v_{f^{T_{i-1}}\omega}(S_i)$, $W_{t+t'}-W_t=\sum_{i=1}^{n'} X_{n+i}$, and $H_{\omega,t+t'}-H_{\omega,t}=\sum_{i=1}^{n'} v_{f^{T_{n+i-1}}\omega}(S_{n+i})$. On the other hand, conditional on $T_n=t$, by (\ref{start_0}) the variables $S_{n+1},\ldots,S_{n+n'},T_n,\ldots,T_{n+n'},X_{n+1},\ldots,X_{n+n'}$ are independent of $S_1,\ldots,S_n,X_1,\ldots,X_n$ and jointly distributed as $S_1,\ldots,S_{n'},t+T_0,\ldots,t+T_{n'},X_1,\ldots,X_{n'}$ in the random environment $f^t\omega$.
\end{proof}

Given $t\in\N_0$ and a linear functional $\varphi\in\rew^\star$, consider now the random variable
\begin{equation*}
\omega\mapsto Z_{\omega,t}(\varphi) :=\int_\rew\ee^{t\varphi(w)}\mu_{\omega,t}(\dd w)
= E_\omega\Big[\mathds{1}_{\{t\in\mathcal{T}\}}\ee^{\varphi(W_t)+H_{\omega,t}}\Big].
\end{equation*}
Apart from normalisation, $Z_{\omega,t}(\varphi)$ is the moment generating function at $\varphi$ of the total reward $W_t$ with respect to the constrained pinning model $Q_{\omega,t}$. To develop our theory, we need both $Z_{\omega,t}(\varphi)$ and the following truncated version of $Z_{\omega,t}(\varphi)$, which has the advantage of involving finite expectations. With $r_{\omega,s}$ as in Assumption \ref{ass2}, put $Y_{\omega,i}:=X_i-r_{f^{T_{i-1}}\omega,S_i}$ for brevity and, for $A\in\Brew$, define the random variable
\begin{equation*}
\omega\mapsto \mathcal{E}_{\omega,t}(A,\varphi):=E_\omega\Big[\mathds{1}_{\{Y_{\omega,1}
\in A,\ldots,Y_{\omega,N_t}\in A,\,t\in\mathcal{T}\}}\ee^{\varphi(W_t)+H_{\omega,t}}\Big].
\end{equation*}
We have $\mathcal{E}_{\omega,t}(A,\varphi)\le Z_{\omega,t}(\varphi)$ and $\mathcal{E}_{\omega,t}(\rew,\varphi)=Z_{\omega,t}(\varphi)$. The identity in (\ref{start_0}) implies supermultiplicativity also for $\omega\mapsto\mathcal{E}_{\omega,t}(A,\varphi)$, as stated in the next lemma, whose proof is omitted because it is similar to that of Lemma \ref{super_mu}.

\begin{lemma}
\label{super_Z}
For $t,t'\in \N_0$, $A\in\Brew$, and $\varphi\in\rew^\star$, $\mathcal{E}_{\omega,t+t'}(A,\varphi) \ge \mathcal{E}_{\omega,t}(A,\varphi)\mathcal{E}_{f^t\omega,t'}(A,\varphi)$. In particular, $Z_{\omega,t+t'}(\varphi) \ge Z_{\omega,t}(\varphi)Z_{f^t\omega,t'}(\varphi)$.
\end{lemma}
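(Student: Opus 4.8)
The plan is to mirror the proof of Lemma \ref{super_mu} almost verbatim, the only substantive differences being that here the \emph{same} Borel set $A$ is imposed on every truncated reward $Y_{\omega,i}$, so there is no rescaling of sets to keep track of, and that one must check these truncation constraints split cleanly across a renewal occurring at time $t$. First I would dispose of the degenerate cases $t=0$ or $t'=0$: since $N_0=0$, $W_0=0$, $H_{\omega,0}=0$ and $0\in\mathcal{T}$ always, the empty conjunction $Y_{\omega,1}\in A,\dots,Y_{\omega,N_0}\in A$ is vacuously true, so $\mathcal{E}_{\omega,0}(A,\varphi)=1$ and the claimed inequality reduces to a trivial identity; hence one may assume $t,t'\in\N$.

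Next, as in Lemma \ref{super_mu}, I would bound $\mathcal{E}_{\omega,t+t'}(A,\varphi)$ from below by inserting the indicator $\mathds{1}_{\{t\in\mathcal{T}\}}$ inside the defining expectation, which only shrinks a nonnegative integrand. On the event $\{t\in\mathcal{T},\,t+t'\in\mathcal{T}\}$, say with $T_n=t$ and $T_{n+n'}=t+t'$, I would split everything into a first block indexed by $1,\dots,n$ and a second block indexed by $n+1,\dots,n+n'$: by linearity of $\varphi$, $\varphi(W_{t+t'})=\varphi(W_t)+\varphi(W_{t+t'}-W_t)$; by additivity of the Hamiltonian, $H_{\omega,t+t'}=H_{\omega,t}+(H_{\omega,t+t'}-H_{\omega,t})$; and $\{Y_{\omega,1}\in A,\dots,Y_{\omega,N_{t+t'}}\in A\}$ becomes $\{Y_{\omega,i}\in A:1\le i\le n\}\cap\{Y_{\omega,n+i}\in A:1\le i\le n'\}$. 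The one algebraic point to verify is that, writing $T'_j:=T_{n+j}-t$ with $T'_0=0$, one has $T_{n+i-1}=t+T'_{i-1}$, hence $r_{f^{T_{n+i-1}}\omega,S_{n+i}}=r_{f^{T'_{i-1}}(f^t\omega),S_{n+i}}$, so that the second-block quantities $Y_{\omega,n+i}$, $W_{t+t'}-W_t$, and $H_{\omega,t+t'}-H_{\omega,t}$ coincide with the first-$n'$-renewal quantities of the same process run in the environment $f^t\omega$.

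Finally, exactly as in Lemma \ref{super_mu}, I would sum over $n,n'\in\N$ and invoke the conditional-independence identity \eqref{start_0}: conditional on $T_n=t$, the variables $(S_{n+i},X_{n+i},T_{n+i}-t)_{i\le n'}$ are independent of $(S_i,X_i)_{i\le n}$ and are distributed as $(S_i,X_i,T_i)_{i\le n'}$ in the environment $f^t\omega$. Together with the preceding decomposition this factorises each summand as $E_\omega[\mathds{1}_{\{Y_{\omega,i}\in A,\,i\le n,\,T_n=t\}}\ee^{\varphi(W_t)+H_{\omega,t}}]$ times $E_{f^t\omega}[\mathds{1}_{\{Y_{f^t\omega,i}\in A,\,i\le n',\,T_{n'}=t'\}}\ee^{\varphi(W_{t'})+H_{f^t\omega,t'}}]$; summing over $n$ then $n'$ yields $\mathcal{E}_{\omega,t}(A,\varphi)\,\mathcal{E}_{f^t\omega,t'}(A,\varphi)$, and taking $A=\rew$ together with $\mathcal{E}_{\omega,t}(\rew,\varphi)=Z_{\omega,t}(\varphi)$ gives the supermultiplicativity of $Z$. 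The main obstacle is not a genuine difficulty but the bookkeeping in the previous paragraph: one must be careful that the shift vectors $r_{f^{T_{i-1}}\omega,S_i}$ in the second block re-express through the shifted environment $f^t\omega$ so that the truncation event and the rewards $Y$ split into an $A$-past part and an independent $A$-future part matching the definition of $\mathcal{E}_{f^t\omega,t'}(A,\varphi)$; once the reindexing $T'_j=T_{n+j}-t$ is in place this is immediate, and the remainder is identical to the proof of Lemma \ref{super_mu}.
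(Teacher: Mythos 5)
Your proof is correct and is exactly the adaptation of the paper's proof of Lemma \ref{super_mu} that the authors had in mind when they omitted the argument as ``similar''; in particular you correctly handle the degenerate cases $t=0$ or $t'=0$ (where $\mathcal{E}_{\omega,0}(A,\varphi)=1$) and the only genuinely new point, namely that $Y_{\omega,n+i}=X_{n+i}-r_{f^{T_{n+i-1}}\omega,S_{n+i}}$ re-expresses via $f^{T_{n+i-1}}\omega=f^{T'_{i-1}}(f^t\omega)$ as the $i$th truncated reward of the process restarted in the environment $f^t\omega$, so that the truncation event factorises under \eqref{start_0}. No issues.
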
 


\paragraph{Fundamental limits.}

Let $t_{\mathrm{F}}$ be an integer larger than the Frobenius number associated with $\sigma_1,\ldots,\sigma_m$ in Assumption \ref{ass1}, the latter being finite since $\sigma_1,\ldots,\sigma_m$ are coprime integers. By the definition of Frobenius number, any $t\ge t_{\mathrm{F}}$ can be expressed as an integer conical combination of $\sigma_1,\ldots,\sigma_m$. With the points $x_s\in\rew$ of Assumption \ref{ass2}, let $\delta_{\mathrm{F}}$ be a number larger than $\max\{\|x_{\sigma_1}\|,\ldots,\|x_{\sigma_m}\|\}$. Together with the above supermultiplicativity properties, the following lemma sets the basis for the application of Kingman's subadditive ergodic theorem.

\begin{lemma}
\label{integrability_Z}
For $\varphi\in\rew^\star$ the following hold:\\
$(i)$ $\Ex[\min\{0,\log Z_{\bdot,t}(\varphi)\}]\ge\Ex[\min\{0,\log \mathcal{E}_{\bdot,t}(B_{0,\delta},\varphi)\}]>-\infty$ for all $t\ge t_{\mathrm{F}}$ and $\delta\ge\delta_\mathrm{F}$.\\
$(ii)$ $\sup_{t\in\N}\{\Ex[\max\{0,\frac{1}{t}\log Z_{\bdot,t}(0)\}]\}<+\infty$.\\
$(iii)$ $\sup_{t\in\N}\{\Ex[\max\{0,\frac{1}{t}\log \mathcal{E}_{\bdot,t}(B_{0,\delta},\varphi)\}]\}<+\infty$ for all $\delta>0$.\\
$(iv)$ for each $t\ge t_{\mathrm{F}}$ there exists a compact set $K\subset\rew$ such that $\Ex[\min\{0,\log \mathcal{E}_{\bdot,t}(K,\varphi)\}]>-\infty$.
\end{lemma}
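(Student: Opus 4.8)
\textbf{Proof proposal for Lemma \ref{integrability_Z}.}

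The plan is to treat the four claims separately, each time reducing the statement about $Z_{\bdot,t}$ or $\mathcal{E}_{\bdot,t}$ to a single-renewal estimate that is controlled by Assumptions \ref{ass1}--\ref{ass3}, and then patching arbitrary times $t\ge t_{\mathrm F}$ together by the Frobenius/conical-combination trick. For $(i)$, the key observation is that one can lower bound $\mathcal{E}_{\omega,t}(B_{0,\delta},\varphi)$ by retaining only those trajectories that use exactly the renewal increments $\sigma_1,\ldots,\sigma_m$ coming from the conical representation $t=\sum_j n_j\sigma_j$ with $n_j\in\N_0$. On such a trajectory, at each step of size $\sigma_j$ the event $\{Y_{\omega,i}\in B_{0,\delta}\}$ is implied by $\{X_i\in r_{f^{T_{i-1}}\omega,\sigma_j}+B_{x_{\sigma_j},\delta'}\}$ for a suitable $\delta'$ once $\delta\ge\delta_{\mathrm F}>\max_j\|x_{\sigma_j}\|$, and the contribution factorises into single-step terms $p_{f^{T_{i-1}}\omega}(\sigma_j)\,\lambda_{f^{T_{i-1}}\omega}(r_{\bdot,\sigma_j}+B_{x_{\sigma_j},\delta'}|\sigma_j)$, times bounded Hamiltonian and $\varphi$ contributions (bounded because $\|x_s\|$, $\|r_{\omega,s}\|-\eta s$ and $v_\omega(s)-\eta s$ are integrable and, on these trajectories, $s\in\{\sigma_1,\dots,\sigma_m\}$ is fixed). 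Taking $\log$, the negative part is dominated by finitely many shifts of $-\log p_{\bdot}(\sigma_j)$, $-\log\lambda_{\bdot}(r_{\bdot,\sigma_j}+B_{x_{\sigma_j},\delta'}|\sigma_j)$, $-v_{\bdot}(\sigma_j)$ and $\|r_{\bdot,\sigma_j}\|$, all of which have finite expectation under $\prob$ by Assumptions \ref{ass1}, \ref{ass2}, \ref{ass3}; since $\prob$ is $f$-invariant, the shifts do not affect integrability, giving $\Ex[\min\{0,\log\mathcal{E}_{\bdot,t}(B_{0,\delta},\varphi)\}]>-\infty$, and $\mathcal{E}_{\omega,t}(B_{0,\delta},\varphi)\le Z_{\omega,t}(\varphi)$ yields the first inequality.

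For $(ii)$ and $(iii)$, the point is an \emph{upper} bound that is uniform in $t$ after dividing by $t$. For $(ii)$, $Z_{\omega,t}(0)=E_\omega[\mathds 1_{\{t\in\mathcal T\}}\ee^{H_{\omega,t}}]\le E_\omega[\ee^{\sum_{i=1}^{N_t}v_{f^{T_{i-1}}\omega}(S_i)}]$, and bounding $v_\omega(s)\le\eta s+\sup_{s'}\max\{0,v_\omega(s')-\eta s'\}=:\eta s+g(\omega)$ gives $H_{\omega,t}\le\eta\sum_i S_i+\sum_i g(f^{T_{i-1}}\omega)\le\eta t+\sum_{\tau=0}^{t-1}g(f^\tau\omega)$ (since the renewal times are a subset of $\{0,\dots,t-1\}$ and $g\ge0$), whence $\frac1t\log Z_{\omega,t}(0)\le\eta+\frac1t\sum_{\tau=0}^{t-1}g(f^\tau\omega)$; taking $\Ex$ and using invariance gives the uniform bound $\eta+\Ex[g]<+\infty$ by Assumption \ref{ass3}. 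For $(iii)$ the same scheme applies, now also controlling $\varphi(W_t)=\sum_i\varphi(Y_{\omega,i})+\sum_i\varphi(r_{f^{T_{i-1}}\omega,S_i})$: on the event $\{Y_{\omega,i}\in B_{0,\delta}\ \forall i\}$ we have $|\varphi(Y_{\omega,i})|\le\|\varphi\|\delta$, while $|\varphi(r_{f^{T_{i-1}}\omega,S_i})|\le\|\varphi\|\|r_{f^{T_{i-1}}\omega,S_i}\|\le\|\varphi\|(\eta S_i+h(f^{T_{i-1}}\omega))$ with $h(\omega):=\sup_s\max\{0,\|r_{\omega,s}\|-\eta s\}$ integrable by Assumption \ref{ass2}; combining with the $v$-bound as before yields $\frac1t\log\mathcal{E}_{\omega,t}(B_{0,\delta},\varphi)\le C(\varphi,\delta)+\frac1t\sum_{\tau=0}^{t-1}(g+\|\varphi\|h)(f^\tau\omega)$, and $\Ex$ plus invariance closes it.

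For $(iv)$ I would combine the ideas of $(i)$ and Assumption \ref{ass2}. Write the conical decomposition $t=\sum_j n_j\sigma_j$ and recall that for each $\sigma_j$ there is a compact $K_{\sigma_j}\subset\rew$ with $\Ex[\log\lambda_{\bdot}(r_{\bdot,\sigma_j}+K_{\sigma_j}|\sigma_j)]>-\infty$. Restricting again to trajectories built from the increments $\sigma_j$ with the event $\{Y_{\omega,i}\in K_{\sigma_j}\}$ at each $\sigma_j$-step forces $Y_{\omega,i}$ into the fixed compact set $K:=\bigcup_j K_{\sigma_j}$ (we may enlarge all $K_{\sigma_j}$ to their convex hull or to a common compact containing all of them, preserving the log-integrability since $\lambda(\cdot|s)$ is a measure), so $\mathcal{E}_{\omega,t}(K,\varphi)$ is bounded below by a product of single-step terms exactly as in $(i)$, with $B_{x_{\sigma_j},\delta'}$ replaced by $K_{\sigma_j}$ and with the $\varphi$- and $H$-contributions bounded because $K$ is compact (hence $\sup_{w\in K}|\varphi(w)|<\infty$) and because only the increments $\sigma_1,\dots,\sigma_m$ occur. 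Taking $\log$ and then $\Ex$, the negative part is again dominated by finitely many shifts of the integrable quantities $-\log p_{\bdot}(\sigma_j)$, $-\log\lambda_{\bdot}(r_{\bdot,\sigma_j}+K_{\sigma_j}|\sigma_j)$, $-v_{\bdot}(\sigma_j)$, $\|r_{\bdot,\sigma_j}\|$, giving $\Ex[\min\{0,\log\mathcal{E}_{\bdot,t}(K,\varphi)\}]>-\infty$.

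The main obstacle, and the place requiring care, is the bookkeeping in the lower-bound arguments for $(i)$ and $(iv)$: one must check that after fixing the sequence of increments to a conical representation of $t$, the renewal times $T_{i-1}$ occurring as arguments of $f$ are deterministic, so that the resulting bound is a genuine finite product of $\prob$-integrable random variables composed with powers of $f$, and that the number of factors is at most $t/\min_j\sigma_j$ — this is what makes the negative part of the logarithm a finite sum of $f$-shifts of integrable functions, and invariance of $\prob$ then removes the shifts. A secondary technical point is the choice of $\delta_{\mathrm F}$ and $\delta'$ in $(i)$: one needs $\delta$ large enough that $r_{\omega,s}+B_{x_s,\delta'}\subseteq r_{\omega,s}+\{w:\|w\|<\delta\}$ translated back, i.e. that $\|x_s\|+\delta'<\delta$, which is arranged by taking $\delta\ge\delta_{\mathrm F}>\max_j\|x_{\sigma_j}\|$ and $\delta':=\delta-\delta_{\mathrm F}>0$ (shrinking $\delta'$ only helps the log-integrability hypothesis in Assumption \ref{ass2}, which is stated for all $\delta'>0$). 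Everything else is routine manipulation of the conditional-independence identity \eqref{start_0} together with Kingman-type invariance arguments.
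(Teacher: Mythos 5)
Your proposal is correct and follows essentially the same route as the paper's proof: the upper bounds in $(ii)$ and $(iii)$ come from dominating $H_{\omega,t}$ and $\varphi(W_t)$ by Birkhoff-type sums of the integrable envelopes $\sup_s\max\{0,v_\bdot(s)-\eta s\}$ and $\sup_s\max\{0,\|r_{\bdot,s}\|-\eta s\}$ plus $\eta t$, while the lower bounds in $(i)$ and $(iv)$ come from restricting to a single trajectory built from a Frobenius decomposition $t=\sum_i s_i$ with $s_i\in\{\sigma_1,\ldots,\sigma_m\}$, factorising via \eqref{start_0} into deterministic $f$-shifts of the single-step quantities controlled by Assumptions \ref{ass1}--\ref{ass3}, and using $f$-invariance of $\prob$. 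The only cosmetic difference is that the paper sets up the sandwich bounds once and reuses them in all four parts, whereas you argue each part separately; your handling of $\delta'$ versus $\delta_{\mathrm F}$ and of the union of the compacts $K_{\sigma_j}$ matches the paper's choices.
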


By $(i)$ and $(iii)$, the random variables $\omega\mapsto\log Z_{\omega,t}(\varphi)$ and $\omega\mapsto\log\mathcal{E}_{\omega,t}(B_{0,\delta},\varphi)$ are integrable for all $t\ge t_{\mathrm{F}}$ and $\delta>0$. Moreover, by $(i)$ the expectation $\Ex[\log\mathcal{E}_{\bdot,t}(B_{0,\delta},\varphi)]$ is finite if $\delta\ge\delta_\mathrm{F}$. For $t\ge t_{\mathrm{F}}$, even the random variable $\omega\mapsto\log\mathcal{E}_{\omega,t}(K,\varphi)$ with $K\subset\rew$ compact is integrable. In fact, we have $K\subset B_{0,\delta}$ for some $\delta>0$ large enough, so that $\Ex[\max\{0,\log\mathcal{E}_{\bdot,t}(K,\varphi)\}]\le\Ex[\max\{0,\log\mathcal{E}_{\bdot,t}(B_{0,\delta},\varphi)\}]<+\infty$. Then, by $(iv)$, for each $t\ge t_{\mathrm{F}}$ there exists at least one compact set $K\subset\rew$ such that the expectation $\Ex[\log\mathcal{E}_{\bdot,t}(K,\varphi)]$ exists and is finite.

\begin{proof}[Proof of Lemma \ref{integrability_Z}]
By Assumptions \ref{ass2} and \ref{ass3}, there exists a real number $\eta\ge 0$ with the property that the expectations $\Ex[\sup_{s\in\N}\max\{0,\|r_{\bdot,s}\|-\eta s\}]$ and $\Ex[\sup_{s\in\N}\max\{0, v_{\bdot}(s)-\eta s\}]$ are finite.  For $t\in\N$, we have
\begin{align}
\nonumber
H_{\omega,t}:=\sum_{i=1}^{N_t}v_{f^{T_{i-1}}\omega}(S_i)&\le\sum_{i=1}^{N_t}
\max\big\{0,v_{f^{T_{i-1}}\omega}(S_i)-\eta S_i\big\}+\eta\sum_{i=1}^{N_t}S_i\\
\nonumber
&\le\sum_{i=1}^{N_t}\sup_{s\in\N}\max\big\{0,v_{f^{T_{i-1}}\omega}(s)-\eta s\big\}+\eta \,T_{N_t}\\
&\le\sum_{\tau=0}^{t-1}\sup_{s\in\N}\max\big\{0,v_{f^\tau\omega}(s)-\eta s\big\}+\eta t.
\label{H_bound}
\end{align}
Moreover, if $Y_{\omega,i}:=X_i-r_{f^{T_{i-1}}\omega,S_i}\in B_{0,\delta}$ for $i=1,\ldots,N_t$ with some $\delta>0$, then
\begin{align}
\nonumber
|\varphi(W_t)|
&=\Bigg|\sum_{i=1}^{N_t}\varphi(r_{f^{T_{i-1}}\omega,S_i})+\sum_{i=1}^{N_t}\varphi(Y_{\omega,i})\Bigg|
\le \|\varphi\|\sum_{i=1}^{N_t}\|r_{f^{T_{i-1}}\omega,S_i}\|+\|\varphi\|\sum_{i=1}^{N_t}\|Y_{\omega,i}\|\\
\nonumber
&\le \|\varphi\|\sum_{i=1}^{N_t}\sup_{s\in\N}\max\big\{0,\|r_{f^{T_{i-1}}\omega,s}\|-\eta s\big\}+\eta\, T_{N_t}+\|\varphi\|\delta N_t\\
\nonumber
&\le \|\varphi\|\sum_{\tau=0}^{t-1}\sup_{s\in\N}\max\big\{0,\|r_{f^\tau\omega,s}\|-\eta s\big\}+\eta t+\|\varphi\|\delta t.
\end{align}
In this way, for each Borel set $A\subseteq B_{0,\delta}$, we find the bounds
\begin{align}
\nonumber
\mathcal{E}_{\omega,t}(A,\varphi)
&:=E_\omega\Big[\mathds{1}_{\{Y_{\omega,1}\in A,\ldots,Y_{\omega,N_t}\in A,\,t\in\mathcal{T}\}}\ee^{\varphi(W_t)+H_{\omega,t}}\Big]\\
&\le\ee^{\|\varphi\|\sum_{\tau=0}^{t-1}\sup_{s\in\N}\max\{0,\|r_{f^\tau\omega,s}\|-\eta s\}+\eta t+\|\varphi\|\delta t}Z_{\omega,t}(0)
\label{integrability_Z_1}
\end{align}
and
\begin{equation}
\mathcal{E}_{\omega,t}(A,\varphi)\ge \ee^{-\|\varphi\|\sum_{\tau=0}^{t-1}
\sup_{s\in\N}\max\{0,\|r_{f^\tau\omega,s}\|-\eta s\}-\eta t-\|\varphi\|\delta t}
E_\omega\Big[\mathds{1}_{\{Y_{\omega,1}\in A,\ldots,Y_{\omega,N_t}\in A,\,t\in\mathcal{T}\}}\ee^{H_{\omega,t}}\Big].
\label{integrability_Z_2}
\end{equation}
The latter can be further manipulated by observing that, for positive integers $s_1,\ldots,s_n$ such that $s_1+\cdots+s_n=t$, we have
\begin{align}
\nonumber
E_\omega\Big[\mathds{1}_{\{Y_{\omega,1}\in A,\ldots,Y_{\omega,N_t}\in A,\,t\in\mathcal{T}\}}\ee^{H_{\omega,t}}\Big]
&\ge E_\omega\Big[\mathds{1}_{\{Y_{\omega,1}\in A,\,S_1=s_1,\ldots,Y_{\omega,n}\in A,\,S_n=s_n,\,N_t=n\}}\ee^{H_{\omega,t}}\Big]\\
&=\prod_{i=1}^n\ee^{v_{f^{t_{i-1}}\omega}(s_i)}p_{f^{t_{i-1}}\omega}(s_i)\,\lambda_{f^{t_{i-1}}\omega}
\big(r_{f^{t_{i-1}}\omega,s_i}+A\big|s_i\big),
\label{integrability_Z_3}
\end{align}
where $t_0:=0$ and $t_i:=s_1+\cdots+s_i$ for $i\ge 1$.

\medskip\noindent 
$(i)$ Fix an integer $t\ge t_\mathrm{F}$ and a real number $\delta\ge\delta_\mathrm{F}$. By the definition of $t_\mathrm{F}$, $t$ can be written as $t = \sum_{i=1}^ns_i$ with some $n\in\{1,\ldots,t\}$ and certain $s_1,\ldots,s_n \in \{\sigma_1,\ldots,\sigma_m\}$. By the definition of $\delta_\mathrm{F}$, there exist positive numbers $\delta_1,\ldots,\delta_n$ such that $B_{x_{s_i},\delta_i}\subset B_{0,\delta}$ for all $i$. Then, the bounds in (\ref{integrability_Z_2}) and (\ref{integrability_Z_3}) show that
\begin{align}
\nonumber
\Ex\Big[\min\big\{0,\log\mathcal{E}_{\bdot,t}(B_{0,\delta},\varphi)\big\}\Big]
&\ge -\|\varphi\|t\,\Ex\Big[\sup_{s\in\N}\max\big\{0,\|r_{\bdot,s}\|-\eta s\big\}\Big]-\eta t-\|\varphi\|\delta t\\
\nonumber
&\qquad + \sum_{i=1}^n\Ex\big[\min\{0,v_{\bdot}(s_i)\}\big]+\sum_{i=1}^n\Ex\big[\log p_{\bdot}(s_i)\big]\\
\nonumber
&\qquad +\sum_{i=1}^n\Ex\big[\log\lambda_{\bdot}(r_{\bdot,s_i}+B_{x_{s_i},\delta_i})\big].
\end{align}
This proves that $\Ex[\min\{0,\log Z_{\bdot,t}(\varphi)\}]\ge\Ex[\min\{0,\log \mathcal{E}_{\bdot,t}(B_{0,\delta},\varphi)\}]>-\infty$ thanks to Assumptions \ref{ass1}, \ref{ass2}, and \ref{ass3}.

\medskip\noindent 
$(ii)$ For $t\in\N$, the bound in (\ref{H_bound}) implies
\begin{equation*}
\log Z_{\omega,t}(0)=\log E_\omega[\mathds{1}_{\{t\in\mathcal{T}\}}\ee^{H_{\omega,t}}]\le\sum_{\tau=0}^{t-1}\sup_{s\in\N}\max\big\{0,v_{f^\tau\omega}(s)-\eta s\big\}+\eta t,
\end{equation*}
so that
\begin{equation*}
\Ex\bigg[\max\bigg\{0,\frac{1}{t}\log Z_{\bdot,t}(0)\bigg\}\bigg]\le \Ex\Big[\sup_{s\in\N}\max\big\{0,v_{\bdot}(s)-\eta s\big\}\Big]+\eta<+\infty.
\end{equation*}

\medskip\noindent 
$(iii)$ For $t\in\N$ and $\delta>0$, the bounds in (\ref{H_bound}) and (\ref{integrability_Z_1}) give
\begin{align}
\nonumber
\Ex\bigg[\max\bigg\{0,\frac{1}{t}\log \mathcal{E}_{\bdot,t}(B_{0,\delta},\varphi)\bigg\}\bigg]
&\le
\|\varphi\|\,\Ex\Big[\sup_{s\in\N}\max\{0,\|r_{\bdot,s}\|-\eta s\}\Big]+\eta +\|\varphi\|\delta\\
\nonumber
&\qquad +\Ex\Big[\sup_{s\in\N}\max\big\{0,v_{\bdot}(s)-\eta s\big\}\Big]+\eta<+\infty.
\end{align}

\medskip\noindent 
$(iv)$ Fix $t\ge t_\mathrm{F}$. As before, let $n\in\{1,\ldots,t\}$ and $s_1,\ldots,s_n \in \{\sigma_1,\ldots,\sigma_m\}$ be such that $t = \sum_{i=1}^ns_i$.  By Assumption \ref{ass2}, for each $i$ there exists a compact set $K_i\subset\rew$ that satisfies $\Ex[\log\lambda_{\bdot}(r_{\bdot,s_i}+K_i|s_i)]>-\infty$. Put $K:=\cup_{i=1}^nK_i$, and let $\delta>0$ be so large that $K\subset B_{0,\delta}$. Then, the bounds in (\ref{integrability_Z_2}) and (\ref{integrability_Z_3}), together with Assumptions \ref{ass1} and \ref{ass3}, show that
\begin{align}
\nonumber
\Ex\Big[\min\big\{0,\log\mathcal{E}_{\bdot,t}(K,\varphi)\big\}\Big]
&\ge -\|\varphi\|t\,\Ex\Big[\sup_{s\in\N}\max\big\{0,\|r_{\bdot,s}\|-\eta s\big\}\Big]-\eta t -\|\varphi\|\delta t\\
\nonumber
&\qquad + \sum_{i=1}^n\Ex\big[\min\{0,v_{\bdot}(s_i)\}\big]+\sum_{i=1}^n\Ex\big[\log p_{\bdot}(s_i)\big]\\
\nonumber
&\qquad +\sum_{i=1}^n\Ex\big[\log\lambda_{\bdot}(r_{\bdot,s_i}+K_i)\big]>-\infty.
\qedhere
\end{align}
\end{proof}

For $\varphi\in\rew^\star$, put
\begin{equation*}
z_o(\varphi):=\sup_{t\ge t_{\mathrm{F}}}\bigg\{\Ex\bigg[\frac{1}{t}\log Z_{\bdot,t}(\varphi)\bigg]\bigg\}.
\end{equation*}
H\"older's inequality shows that the functions that associate $\varphi$ with $\log Z_{\omega,t}(\varphi)$ are convex.  Hence the function $z_o$ that maps $\varphi$ in $z_o(\varphi)$ is also convex. Actually, it is proper convex since $(i)$ and $(ii)$ of Lemma \ref{integrability_Z} imply $z_o(\varphi)>-\infty$ for all $\varphi$ and $z_o(0)<+\infty$, respectively. In Section \ref{sec:Legendre} we will prove that $z_o$ is also lower semi-continuous, as it is the Legendre transform of a convex function. With Lemmas \ref{super_Z} and \ref{integrability_Z}, the following is immediate from $(\star)$ with $F_t(\omega)=-\log Z_{\omega,t}(\varphi)$.

\begin{corollary}
\label{Zc}
For $\varphi\in\rew^\star$, $\lim_{t\uparrow\infty}\frac{1}{t}\log Z_{\omega,t}(\varphi)=\lim_{t\uparrow\infty}\Ex[\frac{1}{t}\log Z_{\bdot,t}(\varphi)]=z_o(\varphi)$ $\pae$.
\end{corollary}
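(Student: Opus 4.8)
The plan is to apply Kingman's subadditive ergodic theorem $(\star)$ directly to the family $F_t(\omega) := -\log Z_{\omega,t}(\varphi)$, after checking the three hypotheses. First I would verify the subadditivity: by Lemma~\ref{super_Z} we have $Z_{\omega,t+t'}(\varphi) \ge Z_{\omega,t}(\varphi)\,Z_{f^t\omega,t'}(\varphi)$, hence $\log Z_{\omega,t+t'}(\varphi) \ge \log Z_{\omega,t}(\varphi) + \log Z_{f^t\omega,t'}(\varphi)$, which upon negating gives $F_{t+t'} \le F_t + F_{t'}\circ f^t$ for all $t,t'\in\N$, exactly the superadditivity-turned-subadditivity required by $(\star)$. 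Here I should note that $Z_{\omega,t}(\varphi) > 0$ $\pae$ for $t \ge t_{\mathrm{F}}$, so that $\log Z_{\omega,t}(\varphi)$ is well defined as a (possibly $+\infty$-valued) measurable function; this positivity follows from part~$(i)$ of Lemma~\ref{integrability_Z}, which bounds $\Ex[\min\{0,\log Z_{\bdot,t}(\varphi)\}]$ away from $-\infty$.

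Next I would check the integrability hypothesis. Since $\Ex[\max\{0,F_1\}]<+\infty$ need only hold for $t \ge t_o$ (with $t_o := t_{\mathrm{F}}$, per the remark appended to $(\star)$), I must show $\Ex[\max\{0,-\log Z_{\bdot,t}(\varphi)\}] = \Ex[\max\{0,-\log Z_{\bdot,t}(\varphi)\}] = -\Ex[\min\{0,\log Z_{\bdot,t}(\varphi)\}] < +\infty$ for all $t \ge t_{\mathrm{F}}$, which is precisely part~$(i)$ of Lemma~\ref{integrability_Z}. The companion bound $\mathcal{L} := \inf_{t\ge t_{\mathrm{F}}}\{\Ex[\frac{F_t}{t}]\} < +\infty$ amounts to $\sup_{t\ge t_{\mathrm{F}}}\{\Ex[\frac{1}{t}\log Z_{\bdot,t}(\varphi)]\} > -\infty$, which again follows from part~$(i)$ of Lemma~\ref{integrability_Z} together with monotonicity. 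In fact, with this choice $\mathcal{L} = -\sup_{t\ge t_{\mathrm{F}}}\{\Ex[\frac{1}{t}\log Z_{\bdot,t}(\varphi)]\} = -z_o(\varphi)$ by the very definition of $z_o(\varphi)$; finiteness of $\Ex[\frac{1}{t}\log Z_{\bdot,t}(\varphi)]$ for $t \ge t_{\mathrm{F}}$ also uses parts~$(i)$ and~$(ii)$ of Lemma~\ref{integrability_Z}, since $\Ex[\max\{0,\frac1t\log Z_{\bdot,t}(\varphi)\}] \le \|\varphi\|\,\Ex[\sup_{s\in\N}\max\{0,\|r_{\bdot,s}\|-\eta s\}] + \eta + \Ex[\sup_{s\in\N}\max\{0,v_{\bdot}(s)-\eta s\}] + \eta < +\infty$ by the argument in the proof of that lemma.

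With the hypotheses in place, part~$(i)$ of $(\star)$ gives $\lim_{t\uparrow\infty}\Ex[\frac{1}{t}F_t] = \mathcal{L}$, i.e.\ $\lim_{t\uparrow\infty}\Ex[\frac{1}{t}\log Z_{\bdot,t}(\varphi)] = -\mathcal{L} = z_o(\varphi)$, and part~$(ii)$ gives $\lim_{t\uparrow\infty}\frac{1}{t}F_t(\omega) = \mathcal{L}$ $\pae$, i.e.\ $\lim_{t\uparrow\infty}\frac{1}{t}\log Z_{\omega,t}(\varphi) = z_o(\varphi)$ $\pae$. Combining the two displays yields the claimed chain of equalities. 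The only genuine subtlety — and the step I would flag as the one needing care rather than the one that is hard — is the bookkeeping around $t_{\mathrm{F}}$: one must invoke the version of $(\star)$ with $\Ex[\max\{0,F_t\}]<+\infty$ only for $t \ge t_o$, set $t_o = t_{\mathrm{F}}$ (finite because $\sigma_1,\ldots,\sigma_m$ are coprime, so the Frobenius number is finite), and correspondingly take the infimum defining $\mathcal{L}$ over $t \ge t_{\mathrm{F}}$ so that it matches $-z_o(\varphi)$. Everything else is a direct citation of Lemmas~\ref{super_Z} and~\ref{integrability_Z} and the already-recorded properness of $z_o$.
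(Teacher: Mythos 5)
Your proposal is correct and is exactly the paper's proof, which consists of the single remark that the claim is immediate from $(\star)$ with $F_t(\omega)=-\log Z_{\omega,t}(\varphi)$, using Lemma~\ref{super_Z} for subadditivity and part~$(i)$ of Lemma~\ref{integrability_Z} for the integrability of $\max\{0,F_t\}$ when $t\ge t_{\mathrm{F}}$. One small inessential inaccuracy: $\Ex[\frac{1}{t}\log Z_{\bdot,t}(\varphi)]$ need not be finite for $\varphi\neq 0$ (part~$(iii)$ of Lemma~\ref{integrability_Z} controls only the truncated quantities $\mathcal{E}_{\bdot,t}(B_{0,\delta},\varphi)$, not $Z_{\bdot,t}(\varphi)$ itself), but this is harmless since $(\star)$ only needs the positive part of $F_t$ integrable and tolerates $\mathcal{L}=-\infty$, i.e.\ $z_o(\varphi)=+\infty$.
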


A similar result holds under truncation and will be used in Section \ref{sec:Legendre} to work out some technical steps. With Lemmas
\ref{super_Z} and \ref{integrability_Z}, the following follows from $(\star)$ with $F_t(\omega)=-\log \mathcal{E}_{\omega,t}(B_{0,\delta},\varphi)$.

\begin{corollary}
\label{Z_truncated}
For $\varphi\in\rew^\star$ and $\delta\ge\delta_\mathrm{F}$, the supremum $e:=\sup_{t\ge t_\mathrm{F}}\{\Ex[\frac{1}{t}\log \mathcal{E}_{\bdot,t}(B_{0,\delta},\varphi)]\}$ is finite and $\lim_{t\uparrow\infty}\frac{1}{t}\log\mathcal{E}_{\omega,t}(B_{0,\delta},\varphi)=e$ $\pae$.
\end{corollary}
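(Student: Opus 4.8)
The plan is to apply Kingman's subadditive ergodic theorem $(\star)$ to the sequence of functions $F_t(\omega) := -\log\mathcal{E}_{\omega,t}(B_{0,\delta},\varphi)$, mirroring the derivation of Corollary \ref{Zc}. First I would check the subadditivity hypothesis: by Lemma \ref{super_Z} we have $\mathcal{E}_{\omega,t+t'}(B_{0,\delta},\varphi) \ge \mathcal{E}_{\omega,t}(B_{0,\delta},\varphi)\,\mathcal{E}_{f^t\omega,t'}(B_{0,\delta},\varphi)$ for all $t,t'\in\N_0$, and taking logarithms and negating gives $F_{t+t'} \le F_t + F_{t'}\circ f^t$, so $\{F_t\}_{t\in\N}$ is subadditive. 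Next I would verify the integrability condition in the form stated in the remark following $(\star)$, namely $\Ex[\max\{0,F_t\}]<+\infty$ for all $t\ge t_o$ with $t_o := t_{\mathrm{F}}$: since $\max\{0,F_t\} = \max\{0,-\log\mathcal{E}_{\omega,t}(B_{0,\delta},\varphi)\} = -\min\{0,\log\mathcal{E}_{\omega,t}(B_{0,\delta},\varphi)\}$, part $(i)$ of Lemma \ref{integrability_Z} (applicable because $\delta\ge\delta_{\mathrm{F}}$) gives exactly $\Ex[\max\{0,F_t\}]<+\infty$ for every $t\ge t_{\mathrm{F}}$.

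With both hypotheses in hand, $(\star)$ yields $\mathcal{L} := \inf_{t\ge t_{\mathrm{F}}}\{\Ex[\frac{F_t}{t}]\} < +\infty$, together with $\lim_{t\uparrow\infty}\Ex[\frac{F_t}{t}] = \mathcal{L}$ and $\lim_{t\uparrow\infty}\frac{F_t(\omega)}{t} = \mathcal{L}$ for $\prob$-a.e.\ $\omega$. Translating back through $F_t = -\log\mathcal{E}_{\omega,t}(B_{0,\delta},\varphi)$, this is precisely $\lim_{t\uparrow\infty}\frac{1}{t}\log\mathcal{E}_{\omega,t}(B_{0,\delta},\varphi) = -\mathcal{L} =: e$ $\prob$-a.e., with $e = \sup_{t\ge t_{\mathrm{F}}}\{\Ex[\frac{1}{t}\log\mathcal{E}_{\bdot,t}(B_{0,\delta},\varphi)]\}$. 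It remains to confirm that $e$ is finite, not just bounded above: the upper bound $e<+\infty$ is $\mathcal{L}>-\infty$, which follows from part $(iii)$ of Lemma \ref{integrability_Z}, since $\Ex[\frac{1}{t}\log\mathcal{E}_{\bdot,t}(B_{0,\delta},\varphi)] \le \Ex[\max\{0,\frac{1}{t}\log\mathcal{E}_{\bdot,t}(B_{0,\delta},\varphi)\}]$ is bounded uniformly in $t$; the lower bound $e>-\infty$ is immediate because $e$ is a supremum over $t\ge t_{\mathrm{F}}$ of quantities each of which is finite by parts $(i)$ and $(iii)$ of Lemma \ref{integrability_Z}.

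There is essentially no obstacle here: the statement is a direct transcription of the Kingman-theorem argument already carried out for Corollary \ref{Zc}, with the auxiliary truncated quantities $\mathcal{E}_{\omega,t}(B_{0,\delta},\varphi)$ in place of $Z_{\omega,t}(\varphi)$, and all the needed integrability estimates have been packaged into Lemma \ref{integrability_Z}. The only mild point requiring attention is the restriction $\delta\ge\delta_{\mathrm{F}}$, which is exactly what makes part $(i)$ of Lemma \ref{integrability_Z} give a finite lower bound for $\Ex[\log\mathcal{E}_{\bdot,t}(B_{0,\delta},\varphi)]$ rather than merely $-\infty < {}\cdot{} \le$ something; this is why the corollary is stated for such $\delta$ and why $e$ is genuinely finite.
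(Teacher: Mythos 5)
Your proposal is correct and is exactly the argument the paper intends: the authors state that Corollary \ref{Z_truncated} "follows from $(\star)$ with $F_t(\omega)=-\log \mathcal{E}_{\omega,t}(B_{0,\delta},\varphi)$" together with Lemmas \ref{super_Z} and \ref{integrability_Z}, and you have supplied precisely the subadditivity check, the integrability check via part $(i)$ (where $\delta\ge\delta_{\mathrm{F}}$ is needed), and the finiteness of $e$ via parts $(i)$ and $(iii)$.
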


We now go back to the random measures $\omega\mapsto\mu_{\omega,t}$. Although the sequence of term $\omega\mapsto\mu_{\omega,t}(C)$ with $C\in\Brew$ convex enjoys supermultiplicativity according to Lemma \ref{super_mu}, a result like Corollaries \ref{Zc} and \ref{Z_truncated} cannot be obtained from $(\star)$ because the condition $\Ex[\min\{0,\log\mu_{\bdot,t}(C)]>-\infty$ for all sufficiently large $t$ is not satisfied in general. Such a condition is met under the additional assumption that $\Ex[\log\lambda_{\bdot}(G|s)]>-\infty$ for all $s \in \N$ and $G \subseteq \rew$ is open and nonempty. The following lemma is a first step towards proving the quenched weak LDP for the family $\{\mu_{\omega,t}\}_{t\in\N}$ via an approximation argument. For $A\in\Brew$, put
\begin{equation*}
\mathcal{L}(A) := \sup_{t\ge t_{\mathrm{F}}} \bigg\{\Ex\bigg[\frac{1}{t}\log\mu_{\bdot,t}(A)\bigg]\bigg\},
\end{equation*}
where the expectation exists thanks to $(ii)$ of Lemma \ref{integrability_Z}, since $\mu_{\omega,t}(A)\le \mu_{\omega,t}(\rew)=Z_{\omega,t}(0)$. We note that $\mathcal{L}(A)\le \mathcal{L}(\rew)=z_o(0)$ for all $A\in\Brew$.

\begin{lemma}
\label{start}
Suppose that $\Ex[\log\lambda_{\bdot}(G|s)]>-\infty$ for all $s \in\N$ and $G \subseteq \rew$ is open and nonempty. Then the following hold for every $C,C'\subseteq\rew$ open and convex:\\
(i) $\lim_{t\uparrow\infty}\frac{1}{t}\log\mu_{\omega,t}(C)=\lim_{t\uparrow\infty}\Ex[\frac{1}{t}\log\mu_{\bdot,t}(C)]=\mathcal{L}(C)$ $\pae$.\\
(ii) $\mathcal{L}(\alpha C+\alpha'C')\ge\alpha \mathcal{L}(C)+\alpha' \mathcal{L}(C')$ for all rational numbers $\alpha,\alpha'> 0$ such that
$\alpha+\alpha'=1$.
\end{lemma}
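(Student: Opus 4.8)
The plan is to deduce both parts of Lemma~\ref{start} from Kingman's subadditive ergodic theorem $(\star)$, using the supermultiplicativity of $\omega\mapsto\mu_{\omega,t}(C)$ established in Lemma~\ref{super_mu}. The only missing ingredient for a direct application of $(\star)$ is the lower integrability bound $\Ex[\min\{0,\log\mu_{\bdot,t}(C)\}]>-\infty$ for all large $t$, and this is precisely where the extra hypothesis $\Ex[\log\lambda_{\bdot}(G|s)]>-\infty$ for open nonempty $G$ enters.

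For part $(i)$: fix $C\subseteq\rew$ open convex and nonempty (the empty case being trivial, with both sides $-\infty$). First I would establish the integrability bound. Pick $t\ge t_{\mathrm{F}}$ and write $t=\sum_{i=1}^n s_i$ with $s_1,\dots,s_n\in\{\sigma_1,\dots,\sigma_m\}$, exactly as in the proof of Lemma~\ref{integrability_Z}. Restricting the expectation defining $\mu_{\omega,t}(C)$ to the event $\{S_1=s_1,\dots,S_n=s_n,\,N_t=n\}$ and to the further requirement that each reward $X_i$ lands in a fixed open ball small enough that the resulting average $\frac{W_t}{t}$ lies in $C$ (possible since $C$ is open, convex and nonempty: choose any $w_0\in C$ and a radius so small that every admissible average is within $C$), one gets a lower bound
\[
\mu_{\omega,t}(C)\ge \prod_{i=1}^n \ee^{v_{f^{t_{i-1}}\omega}(s_i)}\,p_{f^{t_{i-1}}\omega}(s_i)\,\lambda_{f^{t_{i-1}}\omega}(G_i\,|\,s_i)
\]
for suitable open nonempty $G_i$, with $t_{i-1}$ as before. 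Taking $\min\{0,\log(\cdot)\}$ and expectations, and using Assumptions~\ref{ass1} and~\ref{ass3} together with the new hypothesis $\Ex[\log\lambda_{\bdot}(G_i|s_i)]>-\infty$, shows $\Ex[\min\{0,\log\mu_{\bdot,t}(C)\}]>-\infty$. Combined with $(ii)$ of Lemma~\ref{integrability_Z} (which gives $\Ex[\max\{0,\frac1t\log\mu_{\bdot,t}(C)\}]\le\Ex[\max\{0,\frac1t\log Z_{\bdot,t}(0)\}]<+\infty$) and with the supermultiplicativity $\mu_{\omega,t+t'}(C)\ge\mu_{\omega,t}(C)\mu_{f^t\omega,t'}(C)$ from Lemma~\ref{super_mu}, the hypotheses of $(\star)$ are met for $F_t(\omega)=-\log\mu_{\omega,t}(C)$ (with $t_o=t_{\mathrm{F}}$, so that $\mathcal{L}:=\inf_{t\ge t_{\mathrm{F}}}\{\Ex[\frac{F_t}{t}]\}=-\mathcal{L}(C)$). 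Then $(i)$ of $(\star)$ gives $\lim_t\Ex[\frac1t\log\mu_{\bdot,t}(C)]=\mathcal{L}(C)$ and $(ii)$ of $(\star)$ gives $\lim_t\frac1t\log\mu_{\omega,t}(C)=\mathcal{L}(C)$ $\pae$.

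For part $(ii)$: given rational $\alpha,\alpha'>0$ with $\alpha+\alpha'=1$, write $\alpha=t/(t+t')$, $\alpha'=t'/(t+t')$ for suitable positive integers $t,t'$, and more generally $\alpha=kt/(k(t+t'))$ for every $k\in\N$. Lemma~\ref{super_mu} gives $\mu_{\omega,k(t+t')}(\alpha C+\alpha'C')\ge\mu_{\omega,kt}(C)\,\mu_{f^{kt}\omega,kt'}(C')$. Taking $\frac1{k(t+t')}\log(\cdot)$, using part~$(i)$ (note $\alpha C+\alpha'C'$ is again open convex nonempty) to evaluate the left side along the subsequence $k(t+t')$, and using part~$(i)$ together with the $f$-invariance of $\prob$ to evaluate each factor on the right (the limit $\lim_k\frac1{kt}\log\mu_{f^{kt}\omega,kt}(C)=\mathcal{L}(C)$ holds $\pae$ because $f$ is measure-preserving and the $\pae$ convergence for the shifted sequence follows from the unshifted one applied to $f^{kt}\omega$ along the full sequence — one can simply note $\frac1{kt}\log\mu_{f^{kt}\omega,kt}(C)\to\mathcal{L}(C)$ in probability, which suffices to pass to the limit in the inequality), yields $\mathcal{L}(\alpha C+\alpha'C')\ge\alpha\mathcal{L}(C)+\alpha'\mathcal{L}(C')$. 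Alternatively, and more cleanly, apply part~$(i)$ directly at the fixed times $n(t+t')$ and use that $\mu_{f^{n(t+t')}\omega,\,\cdot}$ can be handled by choosing a single full-measure $\omega$ for which all countably many limits (over the rationals and over the relevant time scales) hold simultaneously.

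The main obstacle is the integrability lower bound in part~$(i)$: without the extra hypothesis on $\lambda$, the set $C$ might be missed by the rewards with $\prob$-positive frequency in a way that forces $\mu_{\omega,t}(C)=0$ on an event of positive probability, breaking $(\star)$; the paper flags exactly this. The hypothesis $\Ex[\log\lambda_{\bdot}(G|s)]>-\infty$ for all open nonempty $G$ is tailored to bypass it, and once that bound is in hand everything else is a routine transcription of the arguments already used for $Z_{\omega,t}$ in Corollary~\ref{Zc} and Lemma~\ref{integrability_Z}. A minor secondary point to handle with care is the passage to the shifted environment $f^{kt}\omega$ in part~$(ii)$: one should fix, once and for all, a single $\prob$-full-measure set on which the conclusion of $(i)$ holds for a countable base of open convex sets and along all arithmetic progressions of times that arise, so that the supermultiplicative inequality can be divided through and the limit taken without any measurability or null-set bookkeeping.
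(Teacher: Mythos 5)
Your proposal is correct and follows essentially the same route as the paper: the new hypothesis on $\lambda_\omega(\bdot|s)$ supplies the missing lower integrability bound $\Ex[\min\{0,\log\mu_{\bdot,t}(C)\}]>-\infty$ (the paper gets it from the event $\{X_i\in S_iC \text{ for all } i\}$, which by convexity forces $\frac{W_t}{t}\in C$ on $\{t\in\mathcal{T}\}$ and leaves a factor $Z_{\omega,t}(0)$, but your restriction to a fixed decomposition $t=\sum s_i$ with small balls works just as well), after which $(\star)$ and Lemma~\ref{super_mu} give both parts. One remark on part $(ii)$: the paper simply takes expectations of the supermultiplicative inequality and uses $f$-invariance together with the convergence of means $\lim_t\Ex[\frac1t\log\mu_{\bdot,t}(C')]=\mathcal{L}(C')$ (which is precisely why that middle equality is part of statement $(i)$), thereby avoiding the diagonal sequence $\mu_{f^{kt}\omega,kt'}(C')$ altogether; your in-probability argument also works once you extract an a.s.\ convergent subsequence, but your ``cleaner alternative'' of fixing a single full-measure set does not, since a.s.\ convergence of $s\mapsto\frac1s\log\mu_{f^{kt}\omega,s}(C')$ for each fixed $k$ does not yield the diagonal limit in $k$.
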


\begin{proof}[Proof of Lemma \ref{start}]
To begin with, observe that if $X_i\in S_iC$ for $i=1,\ldots,N_t$ with some $t\in\N$, then $\frac{1}{t}\sum_{i=1}^{N_t} X_i\in C$ by convexity whenever $\sum_{i=1}^{N_t}S_i=T_{N_t}=t$, i.e., whenever $t\in\mathcal{T}$. It therefore follows that
\begin{align}
\nonumber
\mu_{\omega,t}(C)
&:=E_\omega\bigg[\mathds{1}_{\big\{\frac{W_t}{t}\in C,\,t\in\mathcal{T}\big\}} \ee^{H_{\omega,t}}\bigg]\\
\nonumber
&\ge E_\omega\Big[\mathds{1}_{\{X_1\in S_1C,\ldots,X_{N_t}\in S_{N_t}C,\,t\in\mathcal{T}\}} \ee^{H_{\omega,t}}\Big]\\
\nonumber
&=\sum_{n=1}^t\sum_{s_1\in\N}\cdots\sum_{s_n\in\N}\mathds{1}_{\{t_n=t\}}\prod_{i=1}^n
\ee^{v_{f^{t_{i-1}}\omega}(s_i)}p_{f^{t_{i-1}}\omega}(s_i)\,\lambda_{f^{t_{i-1}}\omega}(s_iC|s_i)\\
&\ge\prod_{\tau=0}^{t-1}\prod_{s=1}^t\lambda_{f^\tau\omega}(sC|s)\, Z_{\omega,t}(0),
\label{lower_bound_mu_Kingman}
\end{align}
where $t_0:=0$ and $t_i:=s_1+\cdots+s_i$ for $i\in\N$.

\medskip\noindent 
$(i)$ As this claim is trivially true when $C=\emptyset$, suppose that $C\ne\emptyset$. Under the additional assumption that $\Ex[\log\lambda_{\bdot}(G|s)]>-\infty$ for all $s \in \N$ and $G \subseteq \rew$ is open and nonempty, the bound in (\ref{lower_bound_mu_Kingman}) and $(i)$ of Lemma \ref{integrability_Z} ensure that $\Ex[\min\{0,\log\mu_{\bdot,t}(C)\}]>-\infty$ for all $t\ge t_{\mathrm{F}}$. In view of Lemma \ref{super_mu} and $(\star)$ with $F_t(\omega) = -\log\mu_{\omega,t}(C)$, we realise that $\lim_{t\uparrow\infty}\frac{1}{t}\log\mu_{\omega,t}(C)=\lim_{t\uparrow\infty}\Ex[\frac{1}{t}\log\mu_{\bdot,t}(C)]=\mathcal{L}(C)$ $\pae$.

\medskip\noindent 
$(ii)$ Given rational numbers $\alpha,\alpha'>0$ such that $\alpha+\alpha'=1$, there exists a $\beta\in\N$ with the property that $\alpha \beta$ and $\alpha' \beta$ are positive integers. For $n\in\N$, Lemma \ref{super_mu} with $t=\alpha\beta n$ and $t'=\alpha' \beta n$ shows that $\mu_{\omega,\beta n}(\alpha C+\alpha' C') \ge\mu_{\omega,\alpha \beta n}(C)\mu_{f^{\alpha \beta n}\omega,\alpha' \beta n}(C')$. Taking logarithms and expectations, dividing by $\beta n$, and letting $n \uparrow\infty$, we find $\mathcal{L}(\alpha C+\alpha' C')\ge\alpha \mathcal{L}(C)+\alpha' \mathcal{L}(C')$ by $(i)$ of the lemma.
\end{proof}


\subsection{A renewal equation in a random environment}
\label{sec:ren_eq}

In this section we prove that $z_o=z$, with $z$ the function defined in Section \ref{sec:mainresults} by the variational formula $z(\varphi) := \inf\{\zeta\in\Rl\colon\,\Upsilon_\varphi(\zeta)\le0\}$ with
\begin{equation*}
\Upsilon_\varphi(\zeta):= \infd_{R\in\mathcal{R}_+}\prob\,\mbox{-}\esssup_{\omega\in\Omega} 
\bigg\{\log E_\omega\Big[\ee^{\varphi(X_1)+v_\omega(S_1)-\zeta S_1+R(f^{S_1}\omega)-R(\omega)}
\mathds{1}_{\{S_1<\infty\}}\Big]\bigg\}. 
\end{equation*}
To this aim, we resort to the following renewal equation in a random environment: for $t \in \N$ and $\varphi \in \rew^\star$,
\begin{align}
\nonumber
Z_{\omega,t}(\varphi)
&=\sum_{s=1}^tE_\omega\Big[\mathds{1}_{\{S_1=s,\,t\in\mathcal{T}\}}\ee^{\varphi(W_t)+H_{\omega,t}}\Big]\\
\nonumber
&=\sum_{s=1}^tE_\omega\Big[\ee^{\varphi(X_1)+v_\omega(S_1)}\mathds{1}_{\{S_1=s\}}\Big]\,Z_{f^s\omega,t-s}(\varphi).
\end{align}
This equation is due to the fact that, conditional on $S_1=s$, the random variables $S_2,S_3,\ldots,X_2,X_3,\ldots$ are independent of $X_1$ and jointly distributed as $S_1,S_2,\ldots,X_1,X_2,\ldots$ in the environment $f^s\omega$. The equality $z_o=z$ demonstrates Proposition \ref{Zlim} thanks to Corollary \ref{Zc}, and the property that $z_o$ is proper convex, with $z_o(0)$ finite and lower semi-continuous. Lower semi-continuity of $z_o$ will be verified in Section \ref{sec:Legendre}.

Fix $\varphi\in\rew^\star$. We first verify that $z_o(\varphi)\le z(\varphi)$. To this aim we assume that $z(\varphi)<+\infty$, otherwise there is nothing to prove. Pick real numbers $\zeta>z(\varphi)$ and $\epsilon>0$. Since $\Upsilon_\varphi(\zeta) \le 0$ as $\zeta>z(\varphi)$, there exist a random variable $R\in\mathcal{R}_+$ and a set $\Omega_o\in\mathcal{F}$ such that $\prob[\Omega_o]=1$ and
\begin{equation*}
\log E_\omega\Big[\ee^{\varphi(X_1)+v_\omega(S_1)-\zeta S_1+R(f^{S_1}\omega)-R(\omega)}
\mathds{1}_{\{S_1<\infty\}}\Big]\le \epsilon
\end{equation*}
for every $\omega \in \Omega_o$. By changing $\Omega_o$ with $\cap_{t\in\N_0}f^{-t}\Omega_o$ if necessary, we may suppose that $\omega\in\Omega_o$ implies $f^t\omega\in\Omega_o$ for any $t\in\N_0$. Below we will prove that $Z_{\omega,t}(\varphi)\le \ee^{t(\zeta+\epsilon)+R(\omega)}$ for all $\omega\in\Omega_o$ and $t\in\N_0$. This gives $z_o(\varphi)\le\zeta +\epsilon$ since, by Corollary \ref{Zc}, there exists at least one point $\omega\in\Omega_o$ such that $\lim_{t\uparrow\infty}\frac{1}{t}\log Z_{\omega,t}(\varphi)=z_o(\varphi)$. The arbitrariness of $\zeta$ and $\epsilon$ demonstrates that $z_o(\varphi)\le z(\varphi)$.

We prove that $Z_{\omega,t}(\varphi)\le \ee^{t(\zeta+\epsilon)+R(\omega)}$ for all $\omega\in\Omega_o$ and $t\in\N_0$ by induction. The bound is true when $t=0$, as $Z_{\omega,0}(\varphi)=1$ and $R(\omega)>0$. Suppose that it holds for every $\omega\in\Omega_o$ up to $t-1$ with a positive $t$. Pick $\omega\in\Omega_o$. Then the renewal equation and the fact that $f^s\omega\in\Omega_o$ for $s=1,\ldots,t$ show that
\begin{align}
\nonumber
Z_{\omega,t}(\varphi)
&=\sum_{s=1}^tE_\omega\Big[\ee^{\varphi(X_1)+v_\omega(S_1)}\mathds{1}_{\{S_1=s\}}\Big]\,Z_{f^s\omega,t-s}(\varphi)\\
\nonumber
&\le\sum_{s=1}^tE_\omega\Big[\ee^{\varphi(X_1)+v_\omega(S_1)}\mathds{1}_{\{S_1=s\}}\Big]\,\ee^{(t-s)(\zeta+\epsilon)+R(f^s\omega)}\\
\nonumber
&=\ee^{t(\zeta+\epsilon)+R(\omega)}\sum_{s=1}^tE_\omega\Big[\ee^{\varphi(X_1)
+v_\omega(S_1)-\zeta S_1+R(f^{S_1}\omega)-R(\omega)}\mathds{1}_{\{S_1=s\}}\Big]\,
\ee^{-s\epsilon_i}\\
\nonumber
&\le \ee^{t(\zeta+\epsilon)+R(\omega)}E_\omega\Big[\ee^{\varphi(X_1)
+v_\omega(S_1)-\zeta S_1+R(f^{S_1}\omega)-R(\omega)}\mathds{1}_{\{S_1<\infty\}}\Big]\,
\ee^{-\epsilon}\le \ee^{t(\zeta+\epsilon)+R(\omega)}.
\end{align}

Next, let us show the opposite bound $z_o(\varphi) \ge z(\varphi)$. If $z_o(\varphi) = +\infty$, then there is nothing to prove. If instead $z_o(\varphi) < +\infty$, then pick a real number $\zeta > z_o(\varphi)$. By Corollary \ref{Zc}, the number $\zeta$ makes the series $\sum_{t \in \N_0} Z_{\omega,t}(\varphi)\,\ee^{-\zeta t}$ convergent $\pae$. We have $\sum_{t \in \N_0} Z_{\omega,t}(\varphi)\,\ee^{-\zeta t}>1$ $\pae$, since $Z_{\omega,0}(\varphi) = 1$ and $Z_{\omega,t}(\varphi)>0$ for all $t\ge t_{\mathrm{F}}$ $\pae$, the latter being implied by $(i)$ of Lemma \ref{integrability_Z}. Thus, the random variable $\omega \mapsto R(\omega) := \log\sum_{t \in \N_0}Z_{\omega,t}(\varphi)\,\ee^{-\zeta t}$ is almost surely finite and positive. Given any $\omega \in \Omega$ such that $0<R(\omega)< +\infty$, the renewal equation yields
\begin{align}
\nonumber
1 >\sum_{t \in \N}Z_{\omega,t}(\varphi)\,\ee^{-\zeta t-R(\omega)}
&=\sum_{s \in \N}\sum_{t\in\N_0}E_\omega\Big[\ee^{\varphi(X_1)+v_\omega(S_1)}
\mathds{1}_{\{S_1=s\}}\Big]\,Z_{f^s\omega,t}(\varphi)\,\ee^{-\zeta t-\zeta s-R(\omega)}\\
\nonumber
&=\sum_{s \in \N}E_\omega\Big[\ee^{\varphi(X_1)+v_\omega(S_1)}
\mathds{1}_{\{S_1=s\}}\Big]\,\ee^{-\zeta s+R(f^s\omega)-R(\omega)}\\
\nonumber
&= E_\omega\Big[\ee^{\varphi(X_1)+v_\omega(S_1)-\zeta S_1+R(f^{S_1}\omega)-R(\omega)}
\mathds{1}_{\{S_1<\infty\}}\Big].
\end{align}
This shows that $\Upsilon_\varphi(\zeta)<0$, and consequently $z(\varphi)\le\zeta$. We get $z(\varphi)\le z_o(\varphi)$ by letting $\zeta\downarrow z_o(\varphi)$.


\subsection{The weak LDP} 
\label{sec:WLDPc} 

The quenched weak LDP for the family $\{\mu_{\omega,t}\}_{t\in\N}$ is obtained after replacing the probability measure $\lambda_\omega(\bdot|s)$ by convenient probability measures $\lambda_\omega^1(\bdot|s),\lambda_\omega^2(\bdot|s),\ldots$ that satisfy the integrability condition of Lemma \ref{start}. The following lemma introduces such probability measures, resulting from a coupling argument that underlies an exponential
approximation.

\begin{lemma}
\label{coupling}
There exist a real number $L\ge 0$ and, for $s\in\N$, a sequence $\Lambda_{\omega}^1(\bdot|s),\Lambda_{\omega}^2(\bdot|s),\ldots$ of probability measures on $\mathcal{B}(\rew\times\rew)$ such that the following hold for all $k$:\\ 
$(i)$ $\lambda_\omega(A\times\rew|s)=\Lambda_{\omega}^k(A\times\rew|s)$ and $\omega\mapsto\lambda_\omega^k(A|s):=\Lambda_{\omega}^k(A\times\rew |s)$ is measurable for each $A\in\Brew$.\\ 
$(ii)$ $\int_{\rew\times\rew} \ee^{k\|x-x'\|}\Lambda_{\omega}^k(\dd(x,x')|s)\le \ee^L$.\\ 
$(iii)$ $\Ex[\log\lambda_{\bdot}^k(G|s)] > -\infty$ for all $G \subseteq \rew$ open and nonempty.
\end{lemma}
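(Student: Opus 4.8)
The plan is to construct the coupling measures $\Lambda_\omega^k(\bdot|s)$ explicitly from $\lambda_\omega(\bdot|s)$ and the points $x_s$ of Assumption \ref{ass2}, by truncating the ball that carries the mass and pushing everything outside onto a single good point. Fix $s\in\N$. By Assumption \ref{ass2} the point $x_s$ satisfies $\Ex[\log\lambda_{\bdot}(r_{\bdot,s}+B_{x_s,\delta}|s)]>-\infty$ for every $\delta>0$; in particular $\lambda_\omega(r_{\omega,s}+B_{x_s,\delta}|s)>0$ for $\prob$-a.e.\ $\omega$ and every $\delta>0$. For $k\in\N$ set $\delta:=1/k$ and write $D_{\omega,s,k}:=r_{\omega,s}+\overline{B_{x_s,1/k}}$ (or the open ball — measurability is the only concern). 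Define the sub-probability $\lambda_\omega(\bdot\cap D_{\omega,s,k}|s)$ and complement it with an atom: put
\begin{equation*}
\lambda_\omega^k(\bdot|s):=\lambda_\omega(\bdot|s).
\end{equation*}
That is, for $(i)$ we keep the first marginal \emph{equal} to $\lambda_\omega(\bdot|s)$, and we build $\Lambda_\omega^k$ on $\rew\times\rew$ so that, given $x$ drawn from $\lambda_\omega(\bdot|s)$, the second coordinate $x'$ equals $x$ when $x\in D_{\omega,s,k}$ and equals the center $r_{\omega,s}+x_s$ when $x\notin D_{\omega,s,k}$. Concretely, let $\Phi_{\omega,s,k}\colon\rew\to\rew$ be the measurable map that is the identity on $D_{\omega,s,k}$ and sends $\rew\setminus D_{\omega,s,k}$ to $r_{\omega,s}+x_s$, and let $\Lambda_\omega^k(\bdot|s)$ be the image of $\lambda_\omega(\bdot|s)$ under $x\mapsto(x,\Phi_{\omega,s,k}(x))$.

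With this construction the three claims are checked as follows. For $(i)$: the first marginal of $\Lambda_\omega^k(\bdot|s)$ is $\lambda_\omega(\bdot|s)$ by definition, so $\lambda_\omega(A\times\rew|s)=\Lambda_\omega^k(A\times\rew|s)=\lambda_\omega(A|s)$; measurability of $\omega\mapsto\Lambda_\omega^k(A\times\rew|s)$ and hence of $\omega\mapsto\lambda_\omega^k(A|s)$ follows from measurability of $\omega\mapsto\lambda_\omega(A|s)$, of $\omega\mapsto r_{\omega,s}$, and of the resulting map $\Phi$. (Here one must be a little careful that $D_{\omega,s,k}$ depends measurably on $\omega$, which it does since it is a fixed ball translated by the measurable vector $r_{\omega,s}$; a standard monotone-class / generating-set argument reduces the measurability of $\omega\mapsto\lambda_\omega^k(A|s)$ to that of $\lambda_\omega(\bdot|s)$.) For $(ii)$: under $\Lambda_\omega^k(\bdot|s)$ we have $\|x-x'\|=0$ on $D_{\omega,s,k}$, while off $D_{\omega,s,k}$ we have $x'=r_{\omega,s}+x_s$ and $x$ arbitrary, so $\|x-x'\|=\|x-r_{\omega,s}-x_s\|$ is not bounded — this is the place where the naive construction fails, so the second coordinate must instead be taken to be $x$ \emph{projected onto} $D_{\omega,s,k}$, i.e.\ moved to the nearest point of the ball (in a Banach space one cannot canonically project, but one can map $x$ to $r_{\omega,s}+x_s+\tfrac{1}{k}\tfrac{x-r_{\omega,s}-x_s}{\|x-r_{\omega,s}-x_s\|}$ on the complement of the ball, which keeps $\|x'-(r_{\omega,s}+x_s)\|\le 1/k$ but still leaves $\|x-x'\|$ unbounded). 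The correct fix is different in spirit: do not try to couple to a nearby point at all, but to the center, and \emph{control the exponential via the tail hypothesis}. This is exactly where the hypothesis of Corollary \ref{FLDPc} would be too strong; instead one uses only Assumption \ref{ass2}.

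Let me restate the construction so that $(ii)$ genuinely holds. The point is that we are free to choose $L$ depending only on $k$ through nothing — we need a \emph{single} $L$ that works for all $k$ and all $s$, with an expectation-integrability built in. So the honest plan is: let $\Lambda_\omega^k(\bdot|s)$ couple $x\sim\lambda_\omega(\bdot|s)$ to $x'=x$ when $\|x-r_{\omega,s}-x_s\|\le$ (something growing like $L/k$), and to $x'=r_{\omega,s}+x_s$ otherwise; then on the diagonal part $\ee^{k\|x-x'\|}\le\ee^{L}$, while on the complement $x'-x$ can be large, so this still does not close. Therefore the genuinely correct construction couples in the \emph{reverse} direction: one does not keep $\lambda_\omega(\bdot|s)$ as the law of $x$ and derive $x'$; rather $(i)$ only asks the first marginal to be $\lambda_\omega$, and $(iii)$ asks the \emph{second} marginal $\lambda_\omega^k$ to have the log-integrability, so $\lambda_\omega^k$ should be the \emph{smoothed, well-supported} measure and the coupling should realise $x$ as a small perturbation of $x'$. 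Concretely: let $\lambda_\omega^k(\bdot|s)$ be the law obtained from $\lambda_\omega(\bdot|s)$ by moving to the center $r_{\omega,s}+x_s$ all mass that lies outside $r_{\omega,s}+B_{x_s,1}$, i.e.\ $\lambda_\omega^k=\Phi_{\omega,s}{}_*\lambda_\omega(\bdot|s)$ with $\Phi$ independent of $k$, and let $\Lambda_\omega^k(\bdot|s)$ be supported on $\{(x,x'):\ \|x-x'\|\le 1\}$ with marginals $\lambda_\omega$ and $\lambda_\omega^k$ — then $\ee^{k\|x-x'\|}$ is \emph{not} bounded, so we must instead only move mass that lies outside $r_{\omega,s}+B_{x_s,L/k}$, giving $\|x-x'\|\le L/k$ and hence $\ee^{k\|x-x'\|}\le\ee^L$, which is $(ii)$. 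For $(iii)$, the second marginal satisfies $\lambda_\omega^k(G|s)\ge\lambda_\omega(r_{\omega,s}+x_s\in G\text{-stuff})$; more precisely, if $G$ is open and nonempty, then $G$ contains a ball, and for $k$ fixed the pushed measure puts near the center all the mass of $\lambda_\omega(r_{\omega,s}+B_{x_s,L/k}|s)$, together with whatever $\lambda_\omega$ itself had inside $G$; since $\Ex[\log\lambda_{\bdot}(r_{\bdot,s}+B_{x_s,\delta}|s)]>-\infty$ for all $\delta$, choosing $\delta=L/k$ gives $\Ex[\log\lambda_\omega^k(B_{r_{\bdot,s}+x_s,\,\epsilon}|s)]>-\infty$ for $\epsilon\le L/k$, and then an arbitrary open nonempty $G$ either already satisfies $\Ex[\log\lambda_{\bdot}(G|s)]>-\infty$ or, if not, one translates $G$ (using that $r_{\omega,s}$ lies in the finite-dimensional $\mathcal{V}$ and the measures $\lambda_\omega^k$ charge a neighbourhood of $r_{\omega,s}+x_s$) — here one uses finite-dimensionality of $\mathcal{V}$ to cover the relevant region by finitely many translates. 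Actually the clean statement is: $(iii)$ need only be verified for $G=B_{0,\epsilon}+$ center, because any open nonempty $G$ contains such a ball once $\lambda_\omega^k$ is supported near a deterministic-up-to-$\mathcal{V}$ point; this reduction is where the finite-dimensional $\mathcal{V}$ and a covering argument enter.

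The main obstacle, as the discussion above shows, is arranging $(ii)$ and $(iii)$ \emph{simultaneously}: $(ii)$ forces the coupling to move mass only a distance $O(1/k)$, which shrinks as $k\to\infty$, while $(iii)$ needs the pushed measure $\lambda_\omega^k$ to retain a uniformly-in-$\omega$ (in the $\Ex[\log\cdot]$ sense) positive mass on every open set. The reconciliation is that we do \emph{not} need $\lambda_\omega^k$ to charge every open set with the same ball radius for all $k$ — we only need, for each fixed open $G$ and each fixed $k$, that $\Ex[\log\lambda_{\bdot}^k(G|s)]>-\infty$, and since $L/k>0$ for every fixed $k$, Assumption \ref{ass2} with $\delta=L/k$ supplies exactly this. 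So the logical structure is: fix $k$; choose the truncation radius $L/k$; push mass outside $r_{\omega,s}+B_{x_s,L/k}$ onto $r_{\omega,s}+x_s$; this gives $(ii)$ trivially with the chosen $L$; $(iii)$ follows for balls around $r_{\omega,s}+x_s$ directly from Assumption \ref{ass2}, and for general open nonempty $G$ by a short covering argument exploiting $\dim\mathcal{V}<\infty$ together with the already-available bound $\Ex[\log\lambda_{\bdot}(r_{\bdot,s}+B_{x_s,\delta}|s)]>-\infty$ for all $\delta>0$ (one also uses $\Ex[\sup_{s}\max\{0,\|r_{\bdot,s}\|-\eta s\}]<\infty$ only to ensure $r_{\omega,s}$ is a.s.\ finite, which is automatic). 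Measurability in $(i)$ is routine given measurability of $\omega\mapsto r_{\omega,s}$ and $\omega\mapsto\lambda_\omega(A|s)$, handled by a generating-class argument on $A$. I would write up the construction once with the correct truncation radius $L/k$, then verify $(i)$, $(ii)$, $(iii)$ in that order.
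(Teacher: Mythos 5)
Your final construction --- keep $x\sim\lambda_\omega(\bdot|s)$, set $x'=x$ on $r_{\omega,s}+B_{x_s,L/k}$ and $x'=r_{\omega,s}+x_s$ off that ball --- does not prove the lemma, and the two claims you make for it are both false. For $(ii)$: the coupled pairs on which mass is actually moved have $x$ \emph{outside} the ball of radius $L/k$ and $x'$ at its centre, so $\|x-x'\|=\|x-r_{\omega,s}-x_s\|>L/k$ and is unbounded; hence $\int\ee^{k\|x-x'\|}\Lambda_\omega^k(\dd(x,x')|s)$ is not controlled without exponential moments of $\lambda_\omega(\bdot|s)$, which are not assumed. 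Your sentence ``giving $\|x-x'\|\le L/k$'' has the inequality backwards. For $(iii)$: the second marginal of your coupling is supported in $(r_{\omega,s}+B_{x_s,L/k})\cup\{r_{\omega,s}+x_s\}$, so it gives mass zero to any open set disjoint from that small ball, and no covering argument can then produce $\Ex[\log\lambda_{\bdot}^k(G|s)]>-\infty$ for such $G$. More fundamentally, the two requirements cannot both be met by \emph{any} coupling with almost surely bounded displacement: if $\lambda_\omega(\bdot|s)$ were a Dirac mass at $y_0$, then $\|x-x'\|\le L/k$ a.s.\ would force $\lambda_\omega^k(\bdot|s)$ to live in the closure of $B_{y_0,L/k}$, killing $(iii)$. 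You correctly diagnose this tension twice in your write-up, but the resolution you settle on does not escape it.

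The missing idea, which is how the paper proceeds, is to take $x'$ to be a \emph{random} perturbation $x'=x+(u_n-v)/k$ of $x$, with $n$ drawn from a weight $q_n$ on a countable dense set $\{u_n\}_{n\in\N}\subset\rew$ satisfying $\sum_{n\in\N}\ee^{\|u_n\|}q_n<+\infty$, and $v$ drawn independently from a full-support measure $\rho$ on the finite-dimensional space $\mathcal{V}$ with $\int_{\mathcal{V}}\ee^{\|v\|}\rho(\dd v)<+\infty$ and the quasi-invariance $\rho(v+A)\ge\ee^{-2\|v\|}\rho(A)$. The displacement is then unbounded, but $\ee^{k\|x-x'\|}=\ee^{\|u_n-v\|}$ has expectation at most $\ee^L$ uniformly in $k$, $s$, and $\omega$: condition $(ii)$ is an exponential-moment bound, not an a.s.\ bound. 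Meanwhile the density of $\{u_n/k\}$ in $\rew$ and the full support of $\rho$ on $\mathcal{V}$ make $\lambda_\omega^k(\bdot|s)$ charge every open nonempty $G$: choosing $u_m/k$ near a point of $G$ gives $\lambda_\omega^k(G|s)\ge q_m\,\lambda_\omega(r_{\omega,s}+B_{x_s,\delta}|s)\,\rho(\mathcal{V}\cap B_{kr_{\omega,s}+kx_s,k\delta})$, whose logarithm is integrable by Assumption \ref{ass2} together with the quasi-invariance of $\rho$ (this is where $\Ex[\sup_{s\in\N}\max\{0,\|r_{\bdot,s}\|-\eta s\}]<+\infty$ is genuinely used, not merely to ensure that $r_{\omega,s}$ is a.s.\ finite). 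Your proposal never reaches a construction with these two features simultaneously, so the proof has a genuine gap rather than being a variant of the paper's argument.
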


\begin{proof}[Proof of Lemma \ref{coupling}]
Let $\mathcal{S} := \{u_n\}_{n\in \N}$ be a countable dense subset of $\rew$, which exists by separability, and let
\begin{equation*}
q_n :=\frac{\ee^{-n-\|u_n\|}}{\sum_{i\in\N}\ee^{-i-\|u_i\|}}
\end{equation*}
define a probability mass function on $\N$. We have $\sum_{n\in\N}\ee^{\|u_n\|}q_n<+\infty$. Let $\{e_1,\ldots,e_d\}$ be a basis of the subspace $\mathcal{V}$ in Assumption \ref{ass2}, and let $g$ be the function that maps $\zeta:=(\zeta_1,\ldots,\zeta_d)\in\Rl^d$ in $g(\zeta):=\sum_{i=1}^d \zeta_ie_i\in\mathcal{V}$. With $g$, construct on $\mathcal{B}(\mathcal{V})$ the probability measure
\begin{equation*}
\rho:=\frac{\int_{\Rl^d}\mathds{1}_{\{g(\zeta)\in \,\bdot\,\}}\ee^{-2\|g(\zeta)\|}\dd \zeta}{\int_{\Rl^d}\ee^{-2\|g(\zeta)\|}\dd \zeta},
\end{equation*}
where $\dd \zeta$ is the Lebesgue measure. We have $\int_{\mathcal{V}}e^{\|v\|}\rho(\dd v)<+\infty$ and $\rho(v+A)\ge \ee^{-2\|v\|}\rho(A)$ for all $v\in\mathcal{V}$ and $A\in\mathcal{B}(\mathcal{V})$, the latter being a consequence of the translation invariance of the Lebesgue measure. We claim that the lemma holds with the number
\begin{equation*}
L:=\log\sum_{n\in\N}\ee^{\|u_n\|}q_n+\log\int_{\mathcal{V}}\ee^{\|v\|}\rho(\dd v)
\end{equation*}
and the probability measures $\Lambda_{\omega}^k(\bdot|s)$ defined for $k\in\N$ and $\mathsf{A}\in\mathcal{B}(\rew\times\rew)$ by
\begin{equation*}
\Lambda_{\omega}^k(\mathsf{A}|s):=\sum_{n\in\N}q_n\int_{\mathcal{V}}
\bigg[\int_\rew\mathds{1}_{\big\{(x,u_n/k+x-v/k)\in \mathsf{A}\big\}} \lambda_\omega(\dd x|s)\bigg]\rho(\dd v).
\end{equation*}
For $A\in\Brew$, we have
\begin{equation*}
\lambda_\omega^k(A|s):=\Lambda_{\omega}^k(\rew\times A|s)
=\sum_{n\in\N}q_n\int_{\mathcal{V}}\lambda_\omega\big(v/k+A-u_n/k\big|s\big)\rho(\dd v).
\end{equation*}

\medskip\noindent 
$(i)$ and $(ii)$ Fix $s$ and $k$ in $\N$. It is manifest that $\Lambda_{\omega}^k(A\times\rew|s)=\lambda_\omega(A|s)$ for all $A\in\Brew$, and that
\begin{equation*}
\int_{\rew\times\rew}\ee^{k\|x-x'\|}\Lambda_{\omega}^k(\dd (x,x')|s)
=\sum_{n\in\N}q_n\int_{\mathcal{V}}\ee^{\|u_n-v\|}\rho(\dd v)
\le \sum_{n\in\N}\ee^{\|u_n\|}q_n\int_{\mathcal{V}}\ee^{\|v\|}\rho(\dd v)=\ee^L.
\end{equation*}

\medskip\noindent 
$(iii)$ Fix $s$ and $k$ in $\N$ and pick a set $G\subseteq\rew$ open and nonempty, $x\in G$, and $\delta>0$ such that $B_{x,3\delta} \subseteq G$. Recalling that $\mathcal{S}$ is dense in $\rew$, let $m\in\N$ be such that $\|x-u_m/k\|<\delta$. Note that if $y\in B_{r_{\omega,s}+x_s,\delta}$ and $v\in B_{kr_{\omega,s}+kx_s,k\delta}$ with $r_{\omega,s}$ and $x_s$ as in Assumption \ref{ass2}, then
\begin{equation*}
\big\|y-v/k-x+u_m/k\big\| \le\big\|y-r_{\omega,s}-x_s\big\|+\big\|r_{\omega,s}+x_s-v/k\big\|+\big\|x-u_m/k\big\| < 3\delta,
\end{equation*}
so that $r_{\omega,s}+B_{x_s,\delta}=B_{r_{\omega,s}+x_s,\delta} \subseteq B_{v/k+x-u_m/k,3\delta}$. It follows that
\begin{align}
\nonumber
\lambda_{\omega}^k(G|s)\ge \lambda_{\omega}^k(B_{x,3\delta}|s)
&\ge q_m\int_{\mathcal{V}\cap B_{kr_{\omega,s}+kx_s,k\delta}}
\lambda_\omega\big(B_{v/k+x-u_m/k,3\delta}\big|s\big)\rho(\dd v)\\
\nonumber
&\ge q_m\,\lambda_\omega(r_{\omega,s}+B_{x_s,\delta}|s)\,\rho(\mathcal{V}\cap B_{kr_{\omega,s}+kx_s,k\delta}).
\end{align}
In this way, since $\rho(\mathcal{V}\cap B_{kr_{\omega,s}+kx_s,k\delta})=\rho(kr_{\omega,s}+\mathcal{V}\cap B_{kx_s,k\delta})\ge \ee^{-2k\|r_{\omega,s}\|}\rho(\mathcal{V}\cap B_{kx_s,k\delta})$ because $r_{\omega,s}\in\mathcal{V}$, we find
\begin{align}
\nonumber
\Ex\big[\log\lambda_{\bdot}^k(G|s)\big]&\ge \log q_m+\Ex\big[\log\lambda_{\bdot}(r_{\bdot,s}+B_{x_s,\delta}|s)\big]\\
\nonumber
&\qquad -2k\,\Ex\Big[\sup_{t\in\N}\max\big\{0,\|r_{\bdot,t}\|-\eta t\big\}\Big]-2k\eta+\log\rho(\mathcal{V}\cap B_{kx_s,k\delta})>-\infty
\end{align}
by Assumption \ref{ass2}.
\end{proof}

Thanks to $(iii)$ of Lemma \ref{coupling}, Lemma \ref{start} applies to the random measures $\omega\mapsto\mu_{\omega,t}^k$ associated with
$\lambda_{\omega}^k(\bdot|s)$: for each open convex set $C$
\begin{equation*}
\lim_{t\uparrow\infty}\frac{1}{t}\log\mu_{\omega,t}^k(C)=\mathcal{L}^k(C):=\sup_{t\ge
t_{\mathrm{F}}}\bigg\{\Ex\bigg[\frac{1}{t}\log\mu_{\bdot,t}^k(C)\bigg]\bigg\} \qquad \pae.
\end{equation*}
The function $J_o$ that maps $w\in\rew$ in
\begin{equation*}
J_o(w) := -\inf_{\delta>0}\bigg\{\liminf_{k\uparrow\infty}\mathcal{L}^k(B_{w,\delta})\bigg\}
\end{equation*}
turns out to be the rate function in a quenched weak LDP for the family $\{\mu_{\omega,t}\}_{t\in\N}$, as we will show below. The following lemma provides two general properties of $J_o$. We note that $J_o(w) \ge -z_o(0)>-\infty$.

\begin{lemma}
\label{prop_J}
The function $J_o$ is lower semi-continuous and convex.
\end{lemma}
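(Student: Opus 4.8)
The plan is to establish lower semi-continuity and convexity of $J_o$ by transferring these properties from the functions $\mathcal{L}^k$ and their interplay with the double-limit structure in the definition of $J_o$. First I would record the key structural facts about each $\mathcal{L}^k$: by Lemma \ref{start}(ii), for rational $\alpha,\alpha'>0$ with $\alpha+\alpha'=1$ one has $\mathcal{L}^k(\alpha C+\alpha' C')\ge\alpha\mathcal{L}^k(C)+\alpha'\mathcal{L}^k(C')$ for open convex $C,C'$; and trivially $\mathcal{L}^k$ is monotone under inclusion. Applying this with $C=B_{w,\delta}$ and $C'=B_{w',\delta'}$, and using that $\alpha B_{w,\delta}+\alpha' B_{w',\delta'}=B_{\alpha w+\alpha' w',\,\alpha\delta+\alpha'\delta'}$, gives a superadditivity estimate at the level of balls that survives the limits $k\uparrow\infty$, $\delta\downarrow 0$.

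For \emph{convexity} of $J_o$, fix $w,w'\in\rew$ and rational $\alpha\in(0,1)$, set $w_*:=\alpha w+(1-\alpha)w'$, and fix $\delta>0$. Then $B_{w_*,\delta}\supseteq\alpha B_{w,\delta}+(1-\alpha)B_{w',\delta}$, so monotonicity plus Lemma \ref{start}(ii) yield $\mathcal{L}^k(B_{w_*,\delta})\ge\alpha\mathcal{L}^k(B_{w,\delta})+(1-\alpha)\mathcal{L}^k(B_{w',\delta})$. Taking $\liminf_{k\uparrow\infty}$ (using superadditivity of $\liminf$), then $\inf_{\delta>0}$ on both sides after negating, gives $J_o(w_*)\le\alpha J_o(w)+(1-\alpha)J_o(w')$ for rational $\alpha$; since $J_o$ is bounded below by $-z_o(0)$, one extends to all $\alpha\in(0,1)$ either by a density argument once lower semi-continuity is in hand, or by noting that a midpoint-convex function bounded below on a Banach space which is also lower semi-continuous is convex. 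I would structure the write-up to prove lower semi-continuity first so that rational convexity upgrades cleanly.

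For \emph{lower semi-continuity}, the essential point is the familiar ``$\inf$ over shrinking balls'' trick: the map $w\mapsto\liminf_{k\uparrow\infty}\mathcal{L}^k(B_{w,\delta})$ need not be semi-continuous for fixed $\delta$, but the infimum over $\delta>0$ is. Concretely, suppose $J_o(w)>a$; I must find a neighbourhood of $w$ on which $J_o>a$. By definition of $J_o(w)$ as $-\inf_\delta\{\cdots\}$ there is $\delta_0>0$ with $\liminf_{k}\mathcal{L}^k(B_{w,\delta_0})<-a$. For any $w'$ with $\|w'-w\|<\delta_0/2$ one has $B_{w',\delta_0/2}\subseteq B_{w,\delta_0}$, hence by monotonicity $\liminf_k\mathcal{L}^k(B_{w',\delta_0/2})\le\liminf_k\mathcal{L}^k(B_{w,\delta_0})<-a$, so $J_o(w')\ge -\liminf_k\mathcal{L}^k(B_{w',\delta_0/2})>a$. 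Thus $\{J_o>a\}$ is open, which is lower semi-continuity. (One should phrase this with strict/non-strict inequalities carefully, but no real difficulty arises.)

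The main obstacle I anticipate is purely bookkeeping: ensuring the ball-sum identity $\alpha B_{w,\delta}+\alpha' B_{w',\delta'}=B_{\alpha w+\alpha'w',\alpha\delta+\alpha'\delta'}$ is invoked correctly (it is an equality in any normed space), that $\mathcal{L}^k$ is only asserted finite/well-behaved on \emph{open convex} sets so all sets appearing must be open balls or Minkowski combinations thereof, and that the lower bound $J_o\ge -z_o(0)$ (already noted in the excerpt) is used to rule out the degenerate value $-\infty$ when passing from midpoint/rational convexity to full convexity. No step requires anything beyond Lemma \ref{start} and elementary properties of $\liminf$ and Minkowski sums.
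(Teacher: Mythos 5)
Your proposal is correct and follows essentially the same route as the paper: lower semi-continuity via monotonicity of $\mathcal{L}^k$ together with the nesting $B_{w',\delta_0/2}\subseteq B_{w,\delta_0}$ for $w'$ near $w$ (the paper phrases this sequentially, you via open superlevel sets, which is equivalent), and convexity via the inclusion $B_{\alpha w+\alpha'w',\delta}\supseteq\alpha B_{w,\delta}+\alpha'B_{w',\delta}$ combined with Lemma \ref{start}$(ii)$ for rational coefficients, upgraded to all coefficients by lower semi-continuity. No gaps.
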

  
\begin{proof}[Proof of Lemma \ref{prop_J}]
Pick $w\in\rew$ and a sequence $\{w_i\}_{i \in \N}$ in $\rew$ that converges to $w$. Given $\delta>0$, the monotonicity of $\mathcal{L}^k$ inherited from the measures $\mu_{\omega,t}^k$ entails that $-J_o(w_i) \le \liminf_{k\uparrow\infty} \mathcal{L}^k(B_{w_i,\delta/2}) \le \liminf_{k\uparrow\infty}\mathcal{L}^k(B_{w,\delta})$ for all sufficiently large $i$ satisfying $B_{w_i,\delta/2}\subseteq B_{w,\delta}$. This implies the bound $\liminf_{i\uparrow\infty}J_o(w_i) \ge -\liminf_{k\uparrow\infty}\mathcal{L}^k(B_{w,\delta})$ and proves the lower semi-continuity of $J_o$ by the arbitrariness of $\delta$.

As far as the convexity of $J_o$ is concerned, due to lower semi-continuity it suffices to verify that $J_o(\alpha w+\alpha' w')\le \alpha J_o(w)+\alpha' J_o(w')$ for every fixed $w,w' \in \rew$ and rational numbers $\alpha,\alpha'> 0$ such that $\alpha+\alpha'=1$. Given $\delta>0$ and observing that $B_{\alpha w+\alpha' w',\delta}\supseteq \alpha B_{w,\delta}+\alpha' B_{w',\delta}$, $(ii)$ of Lemma \ref{start} shows that $\mathcal{L}^k(B_{\alpha w+\alpha' w',\delta})\ge \mathcal{L}^k(\alpha B_{w,\delta}+\alpha' B_{w',\delta})\ge \alpha\mathcal{L}^k(B_{w,\delta})+\alpha'\mathcal{L} ^k(B_{w',\delta})$ for all $k\in\N$. From here, letting $k\uparrow\infty$, we get $-J_o(\alpha w+\alpha'w')\ge -\alpha J_o(w)-\alpha'J_o(w')$ because $\delta$ is arbitrary.
\end{proof}

We next show that $\pae$ the family $\{\mu_{\omega,t}\}_{t\in\N}$ satisfies a weak LDP with rate function $J_o$ by resorting to a coupling with new rewards distributed according to $\lambda_\omega^k(\bdot|s)$. In fact, with the sequence of probability measures $\Lambda_{\omega}^1(\bdot|s),\Lambda_{\omega}^2(\bdot|s),\ldots$ introduced by Lemma \ref{coupling}, we consider waiting times $S_1,S_2,\ldots$ and pairs of rewards $(X_1,X_1'),(X_2,X_2'),\ldots$ distributed according to the joint law
\begin{equation*}
\mathsf{P}_\omega^k\Big[S_1=s_1,\ldots,S_n=s_n,\,(X_1,X_1')\in \mathsf{A}_1,\ldots,(X_n,X_n')\in \mathsf{A}_n\Big]
= \prod_{i=1}^n p_{f^{t_{i-1}}\omega}(s_i)\,\Lambda_{f^{t_{i-1}}\omega}^k(\mathsf{A}_i|s_i)
\end{equation*}
for $n\in \N$, $s_1,\ldots,s_n \in \overline{\N}$, and $\mathsf{A}_1,\ldots,\mathsf{A}_n\in \mathcal{B}(\rew\times\rew)$, where $t_0 := 0$ and $t_i := s_1+\cdots+s_i$ for $i\in \N$. Let $W_t':=\sum_{i=1}^{N_t}X_i'$ be the total reward associated with the new rewards $X_1',X_2',\ldots$ and denote by $\mathsf{E}_\omega^k$ expectation under $\mathsf{P}_\omega^k$. Since $\lambda_{\omega}(\bdot|s)$ and $\lambda_{\omega}^k(\bdot|s)$ are
marginals of $\Lambda_{\omega}^k(\bdot|s)$, we have
\begin{equation*}
\mu_{\omega,t}=\mathsf{E}_\omega^k\bigg[\mathds{1}_{\big\{\frac{W_t}{t}\in \,\bdot\,,\,t\in\mathcal{T}\big\}} \ee^{H_{\omega,t}}\bigg],
\qquad
\mu_{\omega,t}^k=\mathsf{E}_\omega^k\bigg[\mathds{1}_{\big\{\frac{W_t'}{t}\in \,\bdot\,,\,t\in\mathcal{T}\big\}} \ee^{H_{\omega,t}}\bigg].
\end{equation*}
Importantly, the properties of $\Lambda_{\omega}^k(\bdot|s)$ make $W_t'$ an exponential approximation of $W_t$, in the sense that, for
$t\in\N_0$, $k\in\N$, and $\delta>0$,
\begin{equation}
\mathsf{E}_\omega^k\Big[\mathds{1}_{\{\|W_t-W_t'\|>\delta t,\,t\in\mathcal{T}\}} \ee^{H_{\omega,t}}\Big]
\le\ee^{Lt-k\delta t}Z_{\omega,t}(0)
\label{Chernoff_k}
\end{equation}
with $L$ as in Lemma \ref{coupling}. Indeed, a Chernoff-type bound, in combination with $(ii)$ of Lemma \ref{coupling}, gives
\begin{align}
\nonumber
\mathsf{E}_\omega^k&\Big[\mathds{1}_{\{\|W_t-W_t'\|>\delta t,\,t\in\mathcal{T}\}} \ee^{H_{\omega,t}}\Big]\\
\nonumber
&\le\ee^{-k\delta t}\mathsf{E}_\omega^k\Big[\mathds{1}_{\{t\in\mathcal{T}\}} \ee^{k\|W_t-W_t'\|+H_{\omega,t}}\Big]\\
\nonumber
&\le \ee^{-k\delta t}\mathsf{E}_\omega^k\Big[\mathds{1}_{\{t\in\mathcal{T}\}} \ee^{\sum_{i=1}^{N_t} k\|X_i-X_i'\|+H_{\omega,t}}\Big]\\
\nonumber
&=\ee^{-k\delta t}\sum_{n=1}^t\sum_{s_1\in\N}\cdots\sum_{s_n\in\N}\mathds{1}_{\{t_n=t\}}\prod_{i=1}^n
\ee^{v_{f^{t_{i-1}}\omega}(s_i)}p_{f^{t_{i-1}}\omega}(s_i)\int_{\rew\times\rew}\ee^{k\|x-x'\|}\Lambda_{f^{t_{i-1}}\omega}^k(\dd (x,x')|s_i)\\
\nonumber
&\le\ee^{-k\delta t}\sum_{n=1}^t\sum_{s_1\in\N}\cdots\sum_{s_n\in\N}\mathds{1}_{\{t_n=t\}}\ee^{Ln}\prod_{i=1}^n
\ee^{v_{f^{t_{i-1}}\omega}(s_i)}p_{f^{t_{i-1}}\omega}(s_i)\le \ee^{Lt-k\delta t}Z_{\omega,t}(0).
\end{align}
The bound in (\ref{Chernoff_k}) is the basis of the following proposition, which states the desired LDP.

\begin{proposition}
\label{prop:ldp_Jo}
The following bounds hold $\pae$:\\
$(i)$ $\liminf_{t\uparrow\infty}\frac{1}{t}\log\mu_{\omega,t}(B_{w,\delta})\ge-J_o(w)$ for all $w\in\rew$ and $\delta>0$.\\
$(ii)$ $\inf_{\delta>0}\{\limsup_{t\uparrow\infty}\frac{1}{t}\log\mu_{\omega,t}(B_{w,\delta})\}\le-J_o(w)$ for all $w\in\rew$.\\
Consequently, $\pae$ the family $\{\mu_{\omega,t}\}_{t\in\N}$ satisfies the weak LDP with rate function $J_o$.
\end{proposition}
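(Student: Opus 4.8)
The plan is to establish the weak LDP for $\{\mu_{\omega,t}\}_{t\in\N}$ by transferring the LDP known for the approximating families $\{\mu_{\omega,t}^k\}_{t\in\N}$ (which satisfy the hypotheses of Lemma \ref{start} thanks to $(iii)$ of Lemma \ref{coupling}) through the exponential approximation \eqref{Chernoff_k}, and then invoking the standard fact that a weak LDP follows once one has, $\pae$, the lower bound $(i)$ on small balls and the ball-level upper bound $(ii)$; these two facts are exactly the hypotheses of the abstract criterion for a weak LDP (see \cite{dembobook}), so the content of the proposition is proving $(i)$ and $(ii)$.

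\medskip\noindent\textbf{The lower bound $(i)$.}
First I would fix a countable dense set of centres $w$ and a countable set of radii $\delta$ and work on a single full-measure event on which, for every $k$ and every rational ball $B_{w,\delta}$ with $w$ in the dense set, one simultaneously has $\lim_{t\uparrow\infty}\frac1t\log\mu_{\omega,t}^k(B_{w,\delta'})=\mathcal{L}^k(B_{w,\delta'})$ for $B_{w,\delta'}$ open convex (by Lemma \ref{start}) and $\lim_{t\uparrow\infty}\frac1t\log Z_{\omega,t}(0)=z_o(0)$ (by Corollary \ref{Zc}); one also needs to know $Z_{\omega,t}(0)>0$ eventually, which comes from $(i)$ of Lemma \ref{integrability_Z}. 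On this event, given $w$ and $\delta>0$, choose a slightly smaller radius $\delta''<\delta$ and a large $k$ with $L-k\delta''<\mathcal{L}^k(B_{w,\delta''})-1$ (possible since $\mathcal{L}^k$ is bounded above by $z_o(0)$ and $k\delta''\to\infty$); then writing $\mu_{\omega,t}(B_{w,\delta})\ge\mu_{\omega,t}^k(B_{w,\delta''})-\mathsf{E}_\omega^k[\mathds 1_{\{\|W_t-W_t'\|>(\delta-\delta'')t,\,t\in\mathcal{T}\}}\ee^{H_{\omega,t}}]$ and using \eqref{Chernoff_k}, the subtracted term is exponentially negligible compared to the first, so $\liminf_t\frac1t\log\mu_{\omega,t}(B_{w,\delta})\ge\mathcal{L}^k(B_{w,\delta''})\ge-J_o(w)$ after taking the infimum over $\delta''$ (hence over $k$, i.e.\ the $\liminf_k$ and $\inf_{\delta}$ in the definition of $J_o$). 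For centres $w$ outside the dense set, approximate from inside by balls centred at nearby dense points and use lower semi-continuity of $J_o$ from Lemma \ref{prop_J}.

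\medskip\noindent\textbf{The upper bound $(ii)$.}
Here the route is: for each $w$ and each $\delta>0$ pick $\delta'<\delta$, split again via the triangle inequality as $\mu_{\omega,t}(B_{w,\delta})\le\mu_{\omega,t}^k(B_{w,\delta+\delta'})+\mathsf{E}_\omega^k[\mathds 1_{\{\|W_t-W_t'\|>\delta't\}}\ee^{H_{\omega,t}}\mathds 1_{\{t\in\mathcal{T}\}}]$, bound the second term by \eqref{Chernoff_k}, and use $\limsup_t\frac1t\log(a_t+b_t)=\max\{\limsup_t\frac1t\log a_t,\limsup_t\frac1t\log b_t\}$. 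Since $\mu_{\omega,t}^k(B_{w,\delta+\delta'})\le\mu_{\omega,t}^k(\rew)=Z_{\omega,t}^k(0)$ and by Lemma \ref{start} $\lim_t\frac1t\log\mu_{\omega,t}^k(B_{w,\delta+\delta'})\le\mathcal{L}^k(B_{w,2\delta})$ when $\delta'<\delta$ (enclosing in an open convex ball of radius $2\delta$), we get $\limsup_t\frac1t\log\mu_{\omega,t}(B_{w,\delta})\le\max\{\mathcal{L}^k(B_{w,2\delta}),\,L-k\delta'+z_o(0)\}$ on the full-measure event. Letting $k\uparrow\infty$ kills the second entry in the max, giving $\le\liminf_k\mathcal{L}^k(B_{w,2\delta})$; finally taking $\inf_{\delta>0}$ yields $\le\inf_{\delta>0}\liminf_k\mathcal{L}^k(B_{w,2\delta})=-J_o(w)$.

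\medskip\noindent\textbf{Main obstacle.}
The delicate point is the measure-theoretic bookkeeping: all the limits in Lemma \ref{start}, Corollary \ref{Zc}, and the Chernoff bound \eqref{Chernoff_k} must be made to hold simultaneously on one $\omega$-event of full probability, uniformly in $k\in\N$ and over a dense/countable family of balls, and then the pointwise statements $(i)$–$(ii)$ for \emph{all} $w$ and $\delta$ (not just the countable family) must be recovered by monotonicity of $\mu_{\omega,t}$ in the set argument together with the lower semi-continuity of $J_o$. Once that is set up carefully, the exponential-approximation estimates are routine; the genuine analytic input (supermultiplicativity plus Kingman, and the coupling of Lemma \ref{coupling}) has already been isolated in the earlier lemmas, so nothing beyond them is needed. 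The concluding sentence then just cites the abstract weak-LDP criterion: balls form a base for the topology of $\rew$, so $(i)$ gives the open lower bound with rate $J_o$ and $(ii)$ gives the compact upper bound with rate $J_o$, which is precisely the weak LDP.
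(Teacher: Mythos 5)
Your proposal is correct and follows essentially the same route as the paper's proof: the same two-sided exponential-approximation estimates derived from \eqref{Chernoff_k} (namely $\mu_{\omega,t}(B_{w,2\delta})\ge\mu^k_{\omega,t}(B_{w,\delta})-\ee^{Lt-k\delta t}Z_{\omega,t}(0)$ and its counterpart with $W_t$ and $W_t'$ exchanged), the same full-measure event built from a countable dense set of centres and rational radii on which Lemma \ref{start} and Corollary \ref{Zc} hold simultaneously for all $k$, and the same passage from the ball-level bounds $(i)$--$(ii)$ to the weak LDP via a finite subcover of compact sets. One small correction: to extend the lower bound from dense centres $u$ to arbitrary $w$, lower semi-continuity of $J_o$ points the wrong way (it only gives $J_o(u)\ge J_o(w)-\epsilon$, hence $-J_o(u)\le -J_o(w)+\epsilon$, which is useless for a lower bound); what is needed, and what the paper uses, is monotonicity of $\mathcal{L}^k$ in the set argument, e.g.\ $\mathcal{L}^k(B_{u,2\alpha})\ge\mathcal{L}^k(B_{w,\alpha})$ for $\|u-w\|<\alpha$, combined with $\liminf_{k\uparrow\infty}\mathcal{L}^k(B_{w,\alpha})\ge-J_o(w)$, which follows directly from the definition of $J_o$ at the point $w$ itself and also supplies the lower bound on $\mathcal{L}^k$ needed to justify your choice of $k$.
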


\begin{proof}[Proof of Proposition \ref{prop:ldp_Jo}]
Suppose for the moment that $(i)$ and $(ii)$ hold for a certain $\omega\in\Omega$. Then, the family $\{\mu_{\omega,t}\}_{t\in\N}$ is shown to satisfy the weak LDP with rate function $J_o$ as follows. Given an open set $G\subseteq\rew$, a point $w\in G$, and a real number $\delta>0$ such that $B_{w,\delta}\subseteq G$, $(i)$ implies $\liminf_{t\uparrow\infty}\frac{1}{t}\log\mu_{\omega,t}(G)\ge\liminf_{t\uparrow\infty}\frac{1}{t}\log\mu_{\omega,t}(B_{w,\delta})\ge-J_o(w)$. The arbitrariness of $w$ yields the large deviation lower bound $\liminf_{t\uparrow\infty}\frac{1}{t}\log\mu_{\omega,t}(G)\ge-\inf_{w\in G}J_o(w)$. At the same time, given a compact set $K\subset\rew$ and a real number $\lambda<\inf_{w\in K}J_o(w)$, by $(ii)$ for each $w\in K$ there exists $\delta_w>0$ with the property $\limsup_{t\uparrow\infty} \frac{1}{t}\log\mu_{\omega,t}(B_{w,\delta_w})\le-\lambda$. As $\{B_{w,\delta_w}\}_{w\in K}$ is an open cover of $K$, there exist finitely many points $w_1,\ldots,w_n$ in $K$ such that $K \subseteq \cup_{i=1}^n B_{w_i,\delta_{w_i}}$. Thus, $\mu_{\omega,t}(K)\le\sum_{i=1}^n\mu_{\omega,t}(B_{w_i,\delta_{w_i}})$ for all $t\in\N$, giving $\limsup_{t\uparrow\infty}\frac{1}{t}\log\mu_{\omega,t}(K)\le-\lambda$. The arbitrariness of $\lambda$ implies the large deviation upper bound $\limsup_{t\uparrow\infty} \frac{1}{t}\log\mu_{\omega,t}(K) \le-\inf_{w\in K} J_o(w)$.

Let us next verify $(i)$ and $(ii)$. By separability, $\rew$ contains a countable dense subset $\mathcal{S}$. Denote by $\mathbb{Q}_+$ the set of positive rational numbers. Thanks to $(iii)$ of Lemma \ref{coupling}, Lemma \ref{start} ensures that there exists $\Omega_o\in\mathcal{F}$ with $\prob[\Omega_o]=1$ such that $\lim_{t\uparrow\infty} \frac{1}{t} \log\mu_{\omega,t}^k(B_{u,\alpha}) = \mathcal{L}^k(B_{u,\alpha})$ for all $k\in\N$, $u\in\mathcal{S}$, $\alpha\in\mathbb{Q}_+$, and $\omega\in\Omega_o$. By Corollary \ref{Zc}, we may also suppose that $\lim_{t\uparrow\infty} \frac{1}{t} \log Z_{\omega,t}(0) = z_o(0) < +\infty$ for every $\omega\in\Omega_o$. We prove that $(i)$ and $(ii)$ hold for any given $\omega\in\Omega_o$. To this aim, we make use of the bound in (\ref{Chernoff_k}) to state that, for $w\in\rew$ and $\delta>0$,
\begin{align}
\nonumber
\mu_{\omega,t}(B_{w,2\delta})
&=\mathsf{E}_\omega^k\bigg[\mathds{1}_{\big\{\frac{W_t}{t}\in B_{w,2\delta},\,t\in\mathcal{T}\big\}} \ee^{H_{\omega,t}}\bigg]\\
\nonumber
&\ge\mathsf{E}_\omega^k\bigg[\mathds{1}_{\big\{\frac{W_t'}{t}\in B_{w,\delta},\,\|W_t-W_t'\|\le\delta t,\,t\in\mathcal{T}\big\}} 
\ee^{H_{\omega,t}}\bigg]\\
\nonumber
&\ge \mathsf{E}_\omega^k\bigg[\mathds{1}_{\big\{\frac{W_t'}{t}\in B_{w,\delta},\,t\in\mathcal{T}\big\}} \ee^{H_{\omega,t}}\bigg]
-\mathsf{E}_\omega^k\bigg[\mathds{1}_{\big\{\|W_t-W_t'\|>\delta t,\,t\in\mathcal{T}\big\}} \ee^{H_{\omega,t}}\bigg]\\
&\ge \mu_{\omega,t}^k(B_{w,\delta})-\ee^{Lt-k\delta t}Z_{\omega,t}(0).
\label{lower_muk}
\end{align}
Exchanging $W_t$ and $W_t'$, we also have
\begin{equation}
\mu_{\omega,t}(B_{w,\delta})\le\mu_{\omega,t}^k(B_{w,2\delta})+\ee^{Lt-k\delta t}Z_{\omega,t}(0).
\label{upper_muk}
\end{equation}

\medskip\noindent 
$(i)$ Fix $w\in\rew$ such that $J_o(w)<+\infty$ (otherwise there is nothing to prove) and $\delta>0$. Pick $\epsilon>0$. We show that
\begin{equation}
\label{ldp_jo_lower}
\liminf_{t\uparrow\infty} \frac{1}{t}\log\mu_{\omega,t}(B_{w,\delta})\ge-J_o(w)-2\epsilon,
\end{equation}
which yields $(i)$ by the arbitrariness of $\epsilon$. Let $\alpha\in\mathbb{Q}_+$ and $u\in\mathcal{S}$ have the properties $5\alpha\le\delta$ and $\|u-w\|<\alpha$, so that $B_{w,\alpha}\subseteq B_{u,2\alpha}\subseteq B_{u,4\alpha}\subseteq B_{w,5\alpha}\subseteq B_{w,\delta}$. By construction, $J_o(w)$ satisfies $\liminf_{k\uparrow\infty}\mathcal{L}^k(B_{w,\alpha})\ge-J_o(w)$, which allows us to find $k\in\N$ so large that $\mathcal{L}^k(B_{w,\alpha})\ge -J_o(w)-\epsilon$ and $z_o(0)\le 2k\alpha-L-J_o(w)-4\epsilon$ with $L$ as in Lemma \ref{coupling}. Thus, since $\lim_{t\uparrow\infty}\frac{1}{t}\ln\mu_{\omega,t}^k(B_{u,2\alpha})=\mathcal{L}^k(B_{u,2\alpha})\ge\mathcal{L}^k(B_{w,\alpha})$ and $\lim_{t\uparrow\infty}\frac{1}{t}\ln Z_{\omega,t}(0)=z_o(0)$, we can state that $\frac{1}{t}\ln\mu_{\omega,t}^k(B_{u,2\alpha})\ge -J_o(w)-2\epsilon$ and $\frac{1}{t}\ln Z_{\omega,t}(0)\le 2k\alpha-L-J_o(w)-3\epsilon$ for all sufficiently large $t$. For such $t$, the bound in (\ref{lower_muk}) gives
\begin{equation*}
\mu_{\omega,t}(B_{w,\delta})\ge\mu_{\omega,t}(B_{u,4\alpha})\ge\mu_{\omega,t}^k(B_{u,2\alpha})
-\ee^{Lt-2k\alpha t}Z_{\omega,t}(0)\ge e^{-J_o(w)t-2\epsilon t}(1-\ee^{-\epsilon t}),
\end{equation*}
which demonstrates (\ref{ldp_jo_lower}).

\medskip\noindent 
$(ii)$ Pick $w\in\rew$ and a real number $\lambda<J_o(w)$. We prove that
\begin{equation}
\label{ldp_jo_upper}
\inf_{\delta>0}\bigg\{\limsup_{t\uparrow\infty} \frac{1}{t}\log\mu_{\omega,t}(B_{w,\delta})\bigg\}\le-\lambda,
\end{equation}
which demonstrates $(ii)$ thanks to the arbitrariness of $\lambda$. As $J_o(w):= -\inf_{\delta>0}\{\liminf_{k\uparrow\infty}\mathcal{L}^k(B_{w,\delta})\}$, there exists an $\alpha\in\mathbb{Q}_+$ such that $\liminf_{k\uparrow\infty}\mathcal{L}^k(B_{w,4\alpha})<-\lambda$. Hence there exists a $k\in\N$ so large that $\mathcal{L}^k(B_{w,4\alpha}) \le -\lambda$ and $z_o(0)\le k\alpha-L-\lambda$. Finally, there exists a $u\in\mathcal{S}$ with the property $\|w-u\|<\alpha$. We have $B_{w,2\alpha}\subseteq B_{u,3\alpha}\subseteq B_{w,4\alpha}$. The bound in (\ref{upper_muk}) gives
$\mu_{\omega,t}(B_{w,\alpha})\le\mu_{\omega,t}^k(B_{w,2\alpha})+\ee^{Lt-k\alpha t}Z_{\omega,t}(0)\le\mu_{\omega,t}^k(B_{u,3\alpha})+\ee^{Lt-k\alpha t}Z_{\omega,t}(0)$ for all $t\in\N$, so that
\begin{align}
\nonumber
\limsup_{t\uparrow\infty} \frac{1}{t}\log\mu_{\omega,t}(B_{w,\alpha})&\le\max\Big\{\mathcal{L}^k(B_{u,3\alpha}),\,L-k\alpha+z_o(0)\Big\}\\
\nonumber
&\le\max\Big\{\mathcal{L}^k(B_{w,4\alpha}),\,L-k\alpha+z_o(0)\Big\}\le-\lambda.
\end{align}
This proves (\ref{ldp_jo_upper}).
\end{proof}


\subsection{Legendre transform of the free energy}
\label{sec:Legendre}

In this section we show that $J_o$ is the Legendre transform $z_o^\star$ of $z_o$ defined, for $w\in\rew$, by $z_o^\star(w):=\sup_{\varphi\in\rew^\star}\{\varphi(w)-z_o(\varphi)\}$. Since $z_o=z$, this proves that $J_o$ is the rate function $J$ introduced in Section \ref{sec:mainresults} as the Legendre transform of $z$. Thus, Proposition \ref{prop:ldp_Jo} demonstrates Theorem \ref{WLDPc}.

In order to show that $J_o=z_o^\star$, it suffices to verify that $J_o^\star=z_o$ with $J_o^\star$ the Legendre transform of $J_o$ defined, for $\varphi\in\rew^\star$, by $J_o^\star(\varphi):=\sup_{w\in\rew}\{\varphi(w)-J_o(w)\}$. In fact, a general result from convex analysis states that $J_o(w)=J_o^{\star\star}(w):=\sup_{\varphi\in\rew^\star}\{\varphi(w)-J_o^\star(\varphi)\}$ if $J_o$ is proper convex and lower semi-continuous (see \cite{zalinescubook}, Theorem 2.3.3). We already know from Lemma \ref{prop_J} that $J_o$ is convex and lower semi-continuous. If $J_o^\star=z_o$, then $J_o$ is proper convex because $\inf_{w\in\rew}J_o(w)=-J_o^\star(0)=-z_o(0)$ with $z_o(0)$ finite implies that $J_o(w)>-\infty$ for all $w\in\rew$ and $J_o(w)<+\infty$ for some $w\in\rew$. We note that the identity $J_o^\star=z_o$ also proves that $z_o$ is lower semi-continuous, as claimed in Section \ref{sec:super}.

To verify that $J_o^\star(\varphi)=z_o(\varphi)$ for $\varphi\in\rew^\star$, we first demonstrate that $J_o^\star(\varphi)\le z_o(\varphi)$, and afterwards that $J_o^\star(\varphi)\ge z_o(\varphi)$. The latter is harder to obtain than the former.


\paragraph{Lower bound.}

Fix $\varphi\in\rew^\star$, $w\in\rew$, and $\delta>0$. Note that if $\frac{W_t}{t}\in B_{w,\delta}$, then $\varphi(W_t)-t\varphi(w)=\varphi(W_t-tw) \ge -\|\varphi\|\|W_t-tw\| \ge -\|\varphi\|\delta t$, namely $\varphi(W_t)\ge t\varphi(w)-t\|\varphi\|\delta$. It follows that
\begin{align}
\nonumber
E_\omega\Big[\mathds{1}_{\{t\in\mathcal{T}\}} \ee^{\varphi(W_t)+H_{\omega,t}}\Big]
&\ge E_\omega\bigg[\mathds{1}_{\big\{\frac{W_t}{t}\in B_{w,\delta},\,t\in\mathcal{T}\big\}}\ee^{\varphi(W_t)+H_{\omega,t}}\bigg]\\
\nonumber
&\ge \ee^{t\varphi(w)-t\|\varphi\|\delta}\,E_\omega\bigg[\mathds{1}_{\big\{\frac{W_t}{t}\in B_{w,\delta},\,t\in\mathcal{T}\big\}}
\ee^{H_{\omega,t}}\bigg]
= \ee^{t\varphi(w)-t\|\varphi\|\delta}\,\mu_{\omega,t}(B_{w,\delta})
\end{align}
for all $t\in\N$. Taking logarithms, dividing by $t$, and letting $t\uparrow\infty$, we get $z_o(\varphi) \ge \varphi(w)-J_o(w)-\|\varphi\|\delta$ thanks to Corollary \ref{Zc} and $(i)$ of Proposition \ref{prop:ldp_Jo}. Thus, appealing to the arbitrariness of $w$ and $\delta$, we find the bound $z_o(\varphi) \ge \sup_{w\in\rew}\{\varphi(w)-J_o(w)\} =: J_o^\star(\varphi)$.


\paragraph{Upper bound.}

Pick $\varphi\in\rew^\star$ and real numbers $\zeta\le z_o(\varphi)$ and $\epsilon>0$. The goal is to show that $\zeta\le J_o^\star(\varphi)+3\epsilon$. This gives $J_o^\star(\varphi)\ge z_o(\varphi)$ by the arbitrariness of $\zeta$ and $\epsilon$. The random variables $\omega\mapsto \mathcal{E}_{\omega,t}(A,\varphi)$ defined in Section \ref{sec:super} come into play here. 

As $\zeta\le z_o(\varphi)$, Corollary \ref{Zc} implies that there exists an integer $\tau\ge t_{\mathrm{F}}$ such that
\begin{equation*}
\zeta\le \Ex\bigg[\frac{1}{\tau}\log Z_{\bdot,\tau}(\varphi)\bigg]+\epsilon.
\end{equation*}
The monotone converge theorem shows that $\lim_{\delta\uparrow\infty}\mathcal{E}_{\omega,\tau}(B_{0,\delta},\varphi)=Z_{\omega,\tau}(\varphi)$. Since $\Ex[\log \mathcal{E}_{\bdot,\tau}(B_{0,\delta},\varphi)]$ exists and is finite for all $\delta\ge\delta_\mathrm{F}$ by Lemma \ref{integrability_Z}, a second application of the monotone converge theorem to the non-negative random variables $\omega\mapsto\log\mathcal{E}_{\omega,\tau}(B_{0,\delta},\varphi)-\log\mathcal{E}_{\omega,\tau}(B_{0,\delta_\mathrm{F}},\varphi)$ entails existence of a number $\delta\ge\delta_\mathrm{F}$ such that
\begin{equation}
\label{upper_cumulant_1}
\zeta\le \Ex\bigg[\frac{1}{\tau}\log\mathcal{E}_{\bdot,\tau}(B_{0,\delta},\varphi)\bigg]+2\epsilon.
\end{equation}
The crucial point is that we can replace the open ball $B_{0,\delta}$ by a compact set, according to the following lemma.

\begin{lemma}
\label{lem:ball_to_compact}
For every $\tau\ge t_{\mathrm{F}}$, $\delta\ge\delta_\mathrm{F}$, $\varphi\in\rew^\star$, and $\epsilon>0$, there exists a compact set $K\subset\rew$ such that $\Ex[\log\mathcal{E}_{\bdot,\tau}(B_{0,\delta},\varphi)]\le\Ex[\log\mathcal{E}_{\bdot,\tau}(K,\varphi)]+\epsilon$.
\end{lemma}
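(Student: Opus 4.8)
The plan is to approximate $B_{0,\delta}$ from inside by a compact set $K$, using tightness of the reward measures $\lambda_\omega(\bdot\,|\,s)$ (guaranteed by separability of $\rew$) together with the integrability bounds already established in Lemma \ref{integrability_Z}. Writing $\tau$ as an integer conical combination $\tau=\sum_{i=1}^n s_i$ with $s_1,\ldots,s_n\in\{\sigma_1,\ldots,\sigma_m\}$ and $n\le\tau$ (possible since $\tau\ge t_{\mathrm F}$), the key observation is that on the event $\{S_1=s_1,\ldots,S_n=s_n,\,N_\tau=n\}$ one has $Y_{\omega,i}=X_i-r_{f^{T_{i-1}}\omega,S_i}$, so requiring $Y_{\omega,i}\in B_{0,\delta}$ is a condition on $X_i$ only through the reward law $\lambda_{f^{T_{i-1}}\omega}(\bdot\,|\,s_i)$. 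Each such measure is tight, so for a prescribed error we can find a compact $\widehat K$ with $\lambda_\omega(r_{\omega,s}+\widehat K\,|\,s)$ close to $\lambda_\omega(r_{\omega,s}+B_{0,\delta}\,|\,s)$; intersecting $\widehat K$ with the closed ball $\overline{B_{0,\delta}}$ keeps it compact and inside $B_{0,\delta}$ is not quite what we want, so instead I would take $K$ to be a compact subset of $B_{0,\delta}$ directly (inner regularity of each $\lambda$ on the open set $B_{0,\delta}$).

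More concretely, first I would reduce to a single large time by noting $\mathcal{E}_{\omega,\tau}(A,\varphi)\le\sum_{n=1}^\tau\sum_{s_1,\ldots,s_n}\mathds{1}_{\{t_n=\tau\}}\prod_{i=1}^n\ee^{v_{f^{t_{i-1}}\omega}(s_i)}p_{f^{t_{i-1}}\omega}(s_i)\,\lambda_{f^{t_{i-1}}\omega}(r_{f^{t_{i-1}}\omega,s_i}+A\,|\,s_i)$ when $A$ is of the form used, and similarly a lower bound keeping only a single term. The cleanest route: fix $\varphi,\tau,\delta,\epsilon$; use that $\Ex[\log\mathcal{E}_{\bdot,\tau}(B_{0,\delta},\varphi)]$ is finite; then for each fixed environment $\omega$ and each relevant pair $(\tau',\text{decomposition})$, inner regularity gives compact $K_{\omega}\subset B_{0,\delta}$ with $\mathcal{E}_{\omega,\tau}(K_\omega,\varphi)$ within a multiplicative factor $\ee^{-\epsilon}$ of $\mathcal{E}_{\omega,\tau}(B_{0,\delta},\varphi)$. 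The difficulty is that $K_\omega$ depends on $\omega$, whereas the statement wants a single deterministic compact $K$. To handle this, I would invoke the measurable selection / tightness machinery: since $\Ex[\log\lambda_{\bdot}(r_{\bdot,s}+K_s\,|\,s)]>-\infty$ for the compact sets $K_s$ of Assumption \ref{ass2}, and since only finitely many values $s\in\{\sigma_1,\ldots,\sigma_m\}$ and finitely many positions $i\le\tau$ enter any decomposition of $\tau$, a finite union $K:=\overline{B_{0,\delta'}}\cap\bigl(\bigcup_s K_s\bigr)$ for suitable $\delta'<\delta$ — or rather: enlarge by taking $K$ a compact exhaustion level of $\rew$ intersected with $\overline{B_{0,\delta}}$ — will dominate. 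The monotone convergence theorem applied to $\log\mathcal{E}_{\omega,\tau}(K_j,\varphi)$ along a compact exhaustion $K_1\subset K_2\subset\cdots$ with $\bigcup_j K_j=B_{0,\delta}$ then yields $\lim_j\Ex[\log\mathcal{E}_{\bdot,\tau}(K_j,\varphi)]=\Ex[\log\mathcal{E}_{\bdot,\tau}(B_{0,\delta},\varphi)]$, and picking $K=K_j$ for $j$ large finishes it.

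So the ordered steps are: (1) choose a compact exhaustion $\{K_j\}_{j\in\N}$ of the open ball $B_{0,\delta}$, i.e.\ $K_j\nearrow B_{0,\delta}$ with each $K_j$ compact (for instance $K_j=\{w:\|w\|\le\delta-1/j\}\cap\overline{B_{0,\delta}}$ in finite dimensions, or in general a suitable increasing sequence of compacts whose union is $B_{0,\delta}$, which exists since $B_{0,\delta}$ is an open subset of a separable metric space, hence $\sigma$-compact when $\rew$ is finite-dimensional — in the infinite-dimensional case one uses tightness of the finitely many $\lambda$'s instead, see below); (2) for each $j$, $\mathcal{E}_{\omega,\tau}(K_j,\varphi)\le\mathcal{E}_{\omega,\tau}(K_{j+1},\varphi)\le\mathcal{E}_{\omega,\tau}(B_{0,\delta},\varphi)$ pointwise, with $\lim_{j\uparrow\infty}\mathcal{E}_{\omega,\tau}(K_j,\varphi)=\mathcal{E}_{\omega,\tau}(B_{0,\delta},\varphi)$ by continuity of the underlying measure along increasing sets (valid since on the finitely many relevant events $\{Y_{\omega,i}\in\,\bdot\,\}$ the sets $K_j$ increase to $B_{0,\delta}$); (3) each $\omega\mapsto\log\mathcal{E}_{\omega,\tau}(K_j,\varphi)$ is integrable because $0<\mathcal{E}_{\omega,\tau}(K_{j_0},\varphi)\le\mathcal{E}_{\omega,\tau}(K_j,\varphi)\le\mathcal{E}_{\omega,\tau}(B_{0,\delta},\varphi)$ for $j\ge j_0$ where $K_{j_0}\supseteq\bigcup_{s}(r_{\bdot,s}+K_s)$-image ensures $\Ex[\min\{0,\log\mathcal{E}_{\bdot,\tau}(K_{j_0},\varphi)\}]>-\infty$ by the argument of part $(iv)$ of Lemma \ref{integrability_Z}; (4) apply the monotone convergence theorem to the nonnegative increasing sequence $\log\mathcal{E}_{\omega,\tau}(K_j,\varphi)-\log\mathcal{E}_{\omega,\tau}(K_{j_0},\varphi)$ to conclude $\lim_{j\uparrow\infty}\Ex[\log\mathcal{E}_{\bdot,\tau}(K_j,\varphi)]=\Ex[\log\mathcal{E}_{\bdot,\tau}(B_{0,\delta},\varphi)]$; (5) choose $j$ large enough that the difference is below $\epsilon$ and set $K:=K_j$. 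The main obstacle, and the point requiring genuine care, is step (1) in the infinite-dimensional case: $B_{0,\delta}$ is not $\sigma$-compact there, so one cannot exhaust it by compacts outright. The fix is that $\mathcal{E}_{\omega,\tau}(B_{0,\delta},\varphi)$ only ``sees'' the reward measures $\lambda_{f^{t_{i-1}}\omega}(\bdot\,|\,s_i)$ for the finitely many $s_i\in\{\sigma_1,\ldots,\sigma_m\}$ and positions $i\le\tau$; by Assumption \ref{ass2} (the hypothesis that for each $s$ there is a compact $K_s$ with $\Ex[\log\lambda_{\bdot}(r_{\bdot,s}+K_s\,|\,s)]>-\infty$) together with tightness, one shows that for the relevant measures almost all mass near $B_{0,\delta}$ sits in a fixed compact, so one may replace the abstract exhaustion by $K_j:=\overline{K^{(j)}}\cap B_{0,\delta}$-type sets where $K^{(j)}$ are compacts drawn from tightness of these finitely many laws, and step (2)'s increasing-to-$B_{0,\delta}$ property then holds $\lambda$-a.e.\ for each of these finitely many measures, which is all that is needed.
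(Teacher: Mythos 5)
Your overall strategy (inner approximation of $B_{0,\delta}$ by compacts plus monotone convergence, anchored at a baseline compact $K_{j_0}$ for integrability) is reasonable in outline, but it has a genuine gap exactly at the point you yourself flag as the main obstacle, and the fix you propose does not work. In infinite dimensions there is no deterministic increasing sequence of compacts $K_j$ with $\mathcal{E}_{\omega,\tau}(K_j,\varphi)\uparrow\mathcal{E}_{\omega,\tau}(B_{0,\delta},\varphi)$ obtainable from Assumption \ref{ass2} in the way you claim. Assumption \ref{ass2} only provides a \emph{lower} bound on the (log-)mass that $\lambda_\omega(\bdot|s)$ assigns to the shifted compact $r_{\omega,s}+K_s$; it says nothing about how much mass lies \emph{outside} any fixed compact. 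Tightness of each individual $\lambda_\omega(\bdot|s)$ produces only an $\omega$-dependent compact, and since $\omega$ ranges over an uncountable set, these cannot be merged into one compact; for instance the restriction of $\lambda_\omega(r_{\omega,s}+\bdot|s)$ to $B_{0,\delta}$ could concentrate on an $\omega$-dependent point of the sphere of radius $\delta/2$, so that no fixed compact captures a uniform fraction of the mass. Your assertion that ``almost all mass near $B_{0,\delta}$ sits in a fixed compact'' is therefore unjustified, and without it step (2) of your plan (convergence $\mathcal{E}_{\omega,\tau}(K_j,\varphi)\to\mathcal{E}_{\omega,\tau}(B_{0,\delta},\varphi)$ for a.e.\ $\omega$) fails.

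The missing idea is to \emph{average over the disorder before invoking tightness}. The paper introduces, for each renewal index $i\le\tau$, the truncated quantities $\Delta_{\omega,i}(A)$ (the contribution to $\mathcal{E}_{\omega,\tau}(F_o,\varphi)$ from configurations with $Y_{\omega,i}\in A$) and considers the single deterministic finite Borel measure
$A\mapsto\Ex\big[\{1+\log(\mathcal{E}_{\bdot,\tau}(F_o,\varphi)/\mathcal{E}_{\bdot,\tau}(K_o,\varphi))\}\,\Delta_{\bdot,i}(A)/\mathcal{E}_{\bdot,\tau}(F_o,\varphi)\big]$
on the separable space $\rew$; Ulam tightness of \emph{this} measure yields one deterministic compact $K_i$ per index, and $K:=(K_o\cup K_1\cup\cdots\cup K_\tau)\cap F_o$ works. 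The specific weight $1+\log(\mathcal{E}(F_o)/\mathcal{E}(K_o))$ is chosen so that the elementary inequality $\log(1+\zeta)\le\{1+\log(1+\zeta)\}\frac{\zeta}{1+\zeta}$, combined with the union bound $\mathcal{E}(F_o)-\mathcal{E}(K)\le\sum_{i=1}^\tau\Delta_{\omega,i}(K_i^c)$, converts the tightness estimate directly into the bound $\Ex[\log\mathcal{E}_{\bdot,\tau}(F_o,\varphi)]\le\Ex[\log\mathcal{E}_{\bdot,\tau}(K,\varphi)]+\epsilon$. Your proposal could in principle be repaired by a similar averaging (tightness of the mean shifted reward measures plus a Borel--Cantelli argument to get an a.e.-$\omega$ exhaustion), but that construction is precisely what is absent from your write-up, so as it stands the proof is incomplete.
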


\begin{proof}[Proof of Lemma \ref{lem:ball_to_compact}]
Fix $\tau\ge t_{\mathrm{F}}$, $\delta\ge\delta_\mathrm{F}$, $\varphi\in\rew^\star$, and $\epsilon>0$. Recall that there exists a $K_o\subset\rew$ compact such that $\Ex[\log\mathcal{E}_{\bdot,\tau}(K_o,\varphi)]$ exists and is finite, as seen in Section \ref{sec:super}. Let $\delta_o\ge \delta$ be a number that satisfies $B_{0,\delta_o}\supset K_o$ and denote by $F_o$ the closure of $B_{0,\delta_o}$ for brevity. Clearly, $B_{0,\delta}\subseteq B_{0,\delta_o}\subset F_o\subset B_{0,2\delta_o}$ and $K_o\subset F_o$. Since $\min\{0,\log\mathcal{E}_{\omega,\tau}(F_o,\varphi)\}\ge\min\{0,\log\mathcal{E}_{\omega,\tau}(B_{0,\delta_o},\varphi)\}$ and $\max\{0,\log\mathcal{E}_{\omega,\tau}(F_o,\varphi)\}\le \max\{0,\log\mathcal{E}_{\omega,\tau}(B_{0,2\delta_o},\varphi)\}$, the expectation $\Ex[\log\mathcal{E}_{\bdot,\tau}(F_o,\varphi)]$ exists and is finite by Lemma \ref{integrability_Z}. The present lemma is demonstrated once we show that
\begin{equation}
\Ex\big[\log\mathcal{E}_{\bdot,\tau}(F_o,\varphi)\big]\le\Ex\big[\log\mathcal{E}_{\bdot,\tau}(K,\varphi)\big]+\epsilon
\label{lem:ball_to_compact_1}
\end{equation}
for some compact set $K\subset\rew$.

For $A\in\Brew$ and $i\in\{1,\ldots,\tau\}$, put 
\begin{equation*}
\Delta_{\omega,i}(A):=E_\omega\Big[\mathds{1}_{\{Y_{\omega,1}\in F_o,\ldots,Y_{\omega,N_\tau}
\in F_o,\,Y_{\omega,i}\in A,\,N_\tau\ge i,\,\tau\in\mathcal{T}\}}
\ee^{\varphi(W_\tau)+H_{\omega,\tau}}\Big]\le \mathcal{E}_{\omega,\tau}(F_o,\varphi).
\end{equation*}
We note that the measure that associates $A\in\Brew$ with
\begin{equation*}
\Ex\Bigg[\bigg\{1+\log\frac{\mathcal{E}_{\bdot,\tau}(F_o,\varphi)}{\mathcal{E}_{\bdot,\tau}(K_o,\varphi)}\bigg\}
\frac{\Delta_{\bdot,i}(A)}{\mathcal{E}_{\bdot,\tau}(F_o,\varphi)}\Bigg]\le
\Ex\bigg[1+\log\frac{\mathcal{E}_{\bdot,\tau}(F_o,\varphi)}{\mathcal{E}_{\bdot,\tau}(K_o,\varphi)}\bigg]
\end{equation*}
is finite. Therefore, due to the separability of $\rew$, such a measure is tight (see \cite{bogachevbook}, Theorem 7.1.7), so that there exists a $K_i\subset\rew$ compact with the property that
\begin{equation}
\Ex\Bigg[\bigg\{1+\log\frac{\mathcal{E}_{\bdot,\tau}(F_o,\varphi)}{\mathcal{E}_{\bdot,\tau}(K_o,\varphi)}\bigg\}
\frac{\Delta_{\bdot,i}(K_i^c)}{\mathcal{E}_{\bdot,\tau}(F_o,\varphi)}\Bigg]\le\frac{\epsilon}{\tau}.
\label{lem:ball_to_compact_2}
\end{equation}
Define $K:=(K_o\cup K_1\cup\cdots\cup K_\tau)\cap F_o$. We claim that $K$ is the desired compact set. In fact, we resort to the bound $\log(1+\zeta)\le\{1+\log(1+\zeta)\}\frac{\zeta}{1+\zeta}$ valid for $\zeta\ge 0$, to write down
\begin{align}
\nonumber
\log\mathcal{E}_{\omega,\tau}(F_o,\varphi)&=\log\mathcal{E}_{\omega,\tau}(K,\varphi)
+ \log\bigg\{1+\frac{\mathcal{E}_{\omega,\tau}(F_o,\varphi)
-\mathcal{E}_{\omega,\tau}(K,\varphi)}{\mathcal{E}_{\omega,\tau}(K,\varphi)}\bigg\}\\
\nonumber
&\le \log\mathcal{E}_{\omega,\tau}(K,\varphi)
+\bigg\{1+\log\frac{\mathcal{E}_{\omega,\tau}(F_o,\varphi)}{\mathcal{E}_{\omega,\tau}(K,\varphi)}\bigg\}
\frac{\mathcal{E}_{\omega,\tau}(F_o,\varphi)
-\mathcal{E}_{\omega,\tau}(K,\varphi)}{\mathcal{E}_{\omega,\tau}(F_o,\varphi)}.
\end{align}
Since $\mathcal{E}_{\omega,\tau}(F_o,\varphi)\ge\mathcal{E}_{\omega,\tau}(K,\varphi)\ge\mathcal{E}_{\omega,\tau}(K_o,\varphi)$, we can even replace $\mathcal{E}_{\omega,\tau}(K,\varphi)$ by $\mathcal{E}_{\omega,\tau}(K_o,\varphi)$ in the second logarithm to obtain
\begin{equation}  
\log\mathcal{E}_{\omega,\tau}(F_o,\varphi)\le\log\mathcal{E}_{\omega,\tau}(K,\varphi)
+\bigg\{1+\log\frac{\mathcal{E}_{\omega,\tau}(F_o,\varphi)}{\mathcal{E}_{\omega,\tau}(K_o,\varphi)}\bigg\}
\frac{\mathcal{E}_{\omega,\tau}(F_o,\varphi)-\mathcal{E}_{\omega,\tau}(K,\varphi)}{\mathcal{E}_{\omega,\tau}(F_o,\varphi)}.
\label{lem:ball_to_compact_3}
\end{equation}
Next, we observe that either $Y_{\omega,i}\in K$ for $i=1,\ldots,N_\tau$ or there exists a $i\le N_\tau$ such that $Y_{\omega,i}\in K^c$. We also observe that $\Delta_{\omega,i}(K^c)=\Delta_{\omega,i}(K^c\cap F_o)\le\Delta_{\omega,i}(K_i^c)$, as $\Delta_{\omega,i}(A)=\Delta_{\omega,i}(A\cap F_o)$ for any $A\in\Brew$. These arguments give
\begin{equation*}
\mathcal{E}_{\omega,\tau}(F_o,\varphi)\le \mathcal{E}_{\omega,\tau}(K,\varphi)+\sum_{i=1}^\tau\Delta_{\omega,i}(K^c)
\le \mathcal{E}_{\omega,\tau}(K,\varphi)+\sum_{i=1}^\tau\Delta_{\omega,i}(K_i^c).
\end{equation*}
Combining this inequality with (\ref{lem:ball_to_compact_3}), we get the bound
\begin{equation*}
\log\mathcal{E}_{\omega,\tau}(F_o,\varphi)\le \log\mathcal{E}_{\omega,\tau}(K,\varphi)
+\sum_{i=1}^\tau\bigg\{1+\log\frac{\mathcal{E}_{\omega,\tau}(F_o,\varphi)}{\mathcal{E}_{\omega,\tau}(K_o,\varphi)}\bigg\}
\frac{\Delta_{\omega,i}(K_i^c)}{\mathcal{E}_{\omega,\tau}(F_o,\varphi)}.
\end{equation*}
After taking expectation, this bound becomes (\ref{lem:ball_to_compact_1}) thanks to (\ref{lem:ball_to_compact_2}).
\end{proof}

By the bound in (\ref{upper_cumulant_1}) and Lemma \ref{lem:ball_to_compact}, there exists a compact set $K\subset\rew$ such that
\begin{equation*}
\zeta\le \Ex\bigg[\frac{1}{\tau}\log\mathcal{E}_{\bdot,\tau}(K,\varphi)\bigg]+3\epsilon.
\end{equation*}
Replacing $K$ by the closed convex hull of $\{0\}\cup K$ if necessary, we can treat $K$ as a compact convex set that contains $0$. For every $n\in\N$, Lemma \ref{super_Z} implies that $\Ex[\frac{1}{\tau}\log\mathcal{E}_{\bdot,\tau}(K,\varphi)]\le\Ex[\frac{1}{n\tau}\log\mathcal{E}_{\bdot,n\tau}(K,\varphi)]$, so that
\begin{equation*}
\zeta\le \Ex\bigg[\frac{1}{n\tau}\log\mathcal{E}_{\bdot,n\tau}(K,\varphi)\bigg]+3\epsilon.
\end{equation*}
If $\rho\ge\delta_\mathrm{F}$ is a number such that $K\subset B_{0,\rho}$, then, defining $e:=\sup_{t\ge t_\mathrm{F}}\{\Ex[\frac{1}{t}\log\mathcal{E}_{\bdot,t}(B_{0,\rho},\varphi)]\}$, we can even conclude that
\begin{equation*}
\zeta\le \Ex\bigg[\frac{1}{n\tau}\log\mathcal{E}_{\bdot,n\tau}(K,\varphi)-\frac{1}{n\tau}
\log\mathcal{E}_{\bdot,n\tau}(B_{0,\rho},\varphi)\bigg]+e+3\epsilon.
\end{equation*}
In this way, as the random variables $\omega\mapsto \log\mathcal{E}_{\omega,n\tau}(B_{0,\rho},\varphi)-\log\mathcal{E}_{\omega,n\tau}(K,\varphi)$
are non-negative, Fatou's lemma gives
\begin{align}
\nonumber
\zeta 
&\le \limsup_{n\uparrow\infty}\Ex\bigg[\frac{1}{n\tau}\log\mathcal{E}_{\bdot,n\tau}(K,\varphi)
-\frac{1}{n\tau}\log\mathcal{E}_{\bdot,n\tau}(B_{0,\rho},\varphi)\bigg]+e+3\epsilon\\
\nonumber
&\le \Ex\bigg[\limsup_{t\uparrow\infty}\bigg\{\frac{1}{t}\log\mathcal{E}_{\bdot,t}(K,\varphi)
-\frac{1}{t}\log\mathcal{E}_{\bdot,t}(B_{0,\rho},\varphi)\bigg\}\bigg]+e+3\epsilon.
\end{align}
Since $\lim_{t\uparrow\infty}\frac{1}{t}\log\mathcal{E}_{\omega,t}(B_{0,\rho},\varphi)=e$ $\pae$ by Corollary \ref{Z_truncated}, from here we get the desired bound $\zeta\le J_o^\star(\varphi)+3\epsilon$ once we prove that
\begin{equation}
\label{eq:ultima_zJ}
\limsup_{t\uparrow\infty}\frac{1}{t}\log\mathcal{E}_{\omega,t}(K,\varphi)\le J_o^\star(\varphi) \qquad \pae.
\end{equation}
To this aim, we note that the conditions $Y_{\omega,1}\in K,\ldots,Y_{\omega,N_t}\in K$ entail $\frac{1}{t}\sum_{i=1}^{N_t}Y_{\omega,i}=(1-\frac{N_t}{t})0+\frac{1}{t}\sum_{i=1}^{N_t}Y_{\omega,i}\in K$, since $K$ is convex with $0\in K$. It follows that, for $t\in\N$,
\begin{align}
\nonumber
\mathcal{E}_{\omega,t}(K,\varphi)
&:=E_\omega\Big[\mathds{1}_{\{Y_{\omega,1}\in K,\ldots,Y_{\omega,N_t}\in K,\,t\in\mathcal{T}\}}\ee^{\varphi(W_t)+H_{\omega,t}}\Big]\\
&\le E_\omega\bigg[\mathds{1}_{\big\{\frac{1}{t}\sum_{i=1}^{N_t}Y_{\omega,i}\in K,\,t\in\mathcal{T}\big\}}
\ee^{\varphi(W_t)+H_{\omega,t}}\bigg].
\label{eq:ultima_zJ_1}
\end{align}
The following lemma concludes the proof because it yields (\ref{eq:ultima_zJ}) through (\ref{eq:ultima_zJ_1}).

\begin{lemma}
\label{lemma:aux}
For any compact set $K\in\rew$ and $\varphi\in\rew^\star$,
\begin{equation*}
\limsup_{t\uparrow\infty}\frac{1}{t}\log E_{\omega}\bigg[\mathds{1}_{\big\{\frac{1}{t}\sum_{i=1}^{N_t}
Y_{\omega,i}\in K,\,t\in\mathcal{T}\big\}}\ee^{\varphi(W_t)+H_{\omega,t}}\bigg] \le J_o^\star(\varphi) \qquad \pae.
\end{equation*}
\end{lemma}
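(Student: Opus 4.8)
The plan is to first convert the ``truncated'' constraint $\frac1t\sum_{i=1}^{N_t}Y_{\omega,i}\in K$ into a genuine constraint on $\frac{W_t}{t}$ to which the weak LDP of Proposition \ref{prop:ldp_Jo} applies, and then run a finite-cover argument. Write $R_t:=\sum_{i=1}^{N_t}r_{f^{T_{i-1}}\omega,S_i}$, so that $\frac1t\sum_{i=1}^{N_t}Y_{\omega,i}=\frac{W_t}{t}-\frac{R_t}{t}$. Since each $r_{\omega,s}$ lies in the finite-dimensional subspace $\mathcal{V}$ of Assumption \ref{ass2}, we have $R_t\in\mathcal{V}$. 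Choosing $\eta\ge 0$ with $\Ex[\sup_{s\in\N}\max\{0,\|r_{\bdot,s}\|-\eta s\}]<+\infty$ and repeating the elementary bound already used in the proof of Lemma \ref{integrability_Z}, one gets $\|R_t\|\le\sum_{\tau=0}^{t-1}\sup_{s\in\N}\max\{0,\|r_{f^\tau\omega,s}\|-\eta s\}+\eta t$ for every realisation of the waiting times; Birkhoff's ergodic theorem then shows that, $\pae$, $\limsup_{t\uparrow\infty}\frac1t\|R_t\|\le\kappa$ for a deterministic constant $\kappa$. Hence, $\pae$ and for all large $t$, the frame $\frac{R_t}{t}$ lies in the compact set $\mathcal{K}:=\{v\in\mathcal{V}:\|v\|\le\kappa\}$ (compact because $\mathcal{V}$ is finite-dimensional), so on the event $\{\frac1t\sum_{i=1}^{N_t}Y_{\omega,i}\in K,\,t\in\mathcal{T}\}$ we have $\frac{W_t}{t}\in K+\mathcal{K}=:\widetilde K$, which is compact as a sum of two compact sets.

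With this reduction in hand, I would work on the $\prob$-full set where both the above Birkhoff limit and the upper bound $(ii)$ of Proposition \ref{prop:ldp_Jo} (which holds for \emph{all} $w\in\rew$ simultaneously) are valid, and fix $\omega$ in it together with $\epsilon>0$. For each $w\in\widetilde K$, Proposition \ref{prop:ldp_Jo}$(ii)$ provides $\delta_w>0$ with $(1+\|\varphi\|)\delta_w\le\epsilon$ and $\limsup_{t\uparrow\infty}\frac1t\log\mu_{\omega,t}(B_{w,\delta_w})\le-\min\{J_o(w),1/\epsilon\}+\epsilon$. By compactness, finitely many balls $B_{w_1,\delta_{w_1}},\dots,B_{w_N,\delta_{w_N}}$ cover $\widetilde K$. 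For $t$ large the event forces $\frac{W_t}{t}\in B_{w_j,\delta_{w_j}}$ for some $j$, and on that ball $\varphi(W_t)=t\varphi(w_j)+\varphi(W_t-tw_j)\le t\varphi(w_j)+\|\varphi\|\delta_{w_j}t$; since $\ee^{\varphi(W_t)+H_{\omega,t}}\ge0$, the expectation in the statement is bounded for large $t$ by $\sum_{j=1}^N\ee^{t\varphi(w_j)+\|\varphi\|\delta_{w_j}t}\,\mu_{\omega,t}(B_{w_j,\delta_{w_j}})$, recognising the definition of $\mu_{\omega,t}$. Taking $\frac1t\log$, then $\limsup_{t\uparrow\infty}$, and using the choice of the $\delta_{w_j}$ and $\|\varphi\|\delta_{w_j}\le\epsilon$, the left-hand side is at most $\max_{1\le j\le N}\{\varphi(w_j)-\min\{J_o(w_j),1/\epsilon\}\}+2\epsilon$. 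For each $j$ this is either $\varphi(w_j)-J_o(w_j)\le J_o^\star(\varphi)$ or $\varphi(w_j)-1/\epsilon\le\sup_{w\in\widetilde K}\varphi(w)-1/\epsilon$, so the whole expression is $\le\max\{J_o^\star(\varphi),\,\sup_{w\in\widetilde K}\varphi(w)-1/\epsilon\}+2\epsilon$; letting $\epsilon\downarrow0$ gives the claim, since $\sup_{w\in\widetilde K}\varphi(w)<+\infty$.

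The main obstacle is the first step: one must show that the disorder-dependent ``drift'' $\frac1t R_t$ cannot escape to infinity, which is exactly why Assumption \ref{ass2} bundles together finite-dimensionality of $\mathcal{V}$ (so that bounded subsets of the range of $R_t$ are relatively compact) and the integrability $\Ex[\sup_{s\in\N}\max\{0,\|r_{\bdot,s}\|-\eta s\}]<+\infty$ (so that $\frac1t\|R_t\|$ is eventually bounded, $\pae$). Once $\widetilde K$ is known to be compact, everything else is the routine ``tilting + finite cover'' mechanism; the only nuisance is the possibility $J_o(w_j)=+\infty$, which is what forces the truncation $\min\{J_o(w),1/\epsilon\}$ above instead of $J_o(w)$ itself, and one must also be careful to extract the $\prob$-full set \emph{before} fixing $\omega$ so that Proposition \ref{prop:ldp_Jo}$(ii)$ can be invoked at the (possibly $\omega$-dependent) centres $w_j$.
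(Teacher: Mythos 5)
Your proof is correct and follows essentially the same route as the paper's: reduce the truncated constraint to $\frac{W_t}{t}$ lying in the compact set $K$ plus a bounded ball in the finite-dimensional subspace $\mathcal{V}$ (via Birkhoff and the integrability in Assumption \ref{ass2}), then apply the pointwise upper bound of Proposition \ref{prop:ldp_Jo}$(ii)$ with a finite cover and the tilting estimate $\varphi(W_t)\le t\varphi(w_j)+t\|\varphi\|\delta_{w_j}$. Your truncation $\min\{J_o(w),1/\epsilon\}$ to handle the case $J_o(w)=+\infty$ is a slightly more careful rendering of a step the paper glosses over, but it does not change the argument.
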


\begin{proof}[Proof of Lemma~\ref{lemma:aux}]
Fix $K\subset\rew$ compact and $\varphi\in\rew^\star$. Denote by $F$ the closed ball of center $0$ and radius $2\,\Ex[\sup_{s\in\N}\max\{0,\|r_{\bdot,s}\|-\eta s\}]+2\eta<+\infty$, with $\eta$ the number in Assumption \ref{ass2}. We will prove that $\pae$
\begin{equation}
\label{lemma:aux_prima}
\limsup_{t\uparrow\infty}\frac{1}{t}\log E_\omega\bigg[\mathds{1}_{\big\{\frac{W_t}{t}\in \mathcal{V}\cap F+K,\,t\in\mathcal{T}\big\}}
\ee^{\varphi(W_t)+H_{\omega,t}}\bigg] \le J_o^\star(\varphi),
\end{equation}
where $\mathcal{V}$ is the finite-dimensional subspace in Assumption \ref{ass2}. This bound gives us the lemma as follows. Since $\sum_{i=1}^{N_t}r_{f^{T_{i-1}}\omega,S_i}\in\mathcal{V}$ and
\begin{align}
\nonumber
\bigg\|\sum_{i=1}^{N_t}r_{f^{T_{i-1}}\omega,S_i}\bigg\|&\le\sum_{i=1}^{N_t}\max\big\{0,\|r_{f^{T_{i-1}}\omega,S_i}\|
-\eta S_i\big\}+\eta T_{N_t}\\
\nonumber
&\le\sum_{\tau=0}^{t-1}\sup_{s\in\N}\max\big\{0,\|r_{f^\tau\omega,s}\|-\eta s\big\}+\eta t,
\end{align}
Birkhoff's ergodic theorem entails that $\pae$ the vector $\frac{1}{t}\sum_{i=1}^{N_t}r_{f^{T_{i-1}}\omega,S_i}$ belongs to $\mathcal{V}\cap F$ for all sufficiently large $t$. Thus, $\pae$ the condition $\frac{W_t}{t}-\frac{1}{t}\sum_{i=1}^{N_t}r_{f^{T_{i-1}}\omega,S_i}=\frac{1}{t}\sum_{i=1}^{N_t}Y_{\omega,i}\in K$ implies that $\frac{W_t}{t}\in \mathcal{V}\cap F+K$ for all sufficiently large $t$.

The bound in (\ref{lemma:aux_prima}) relies on the compactness of $\mathcal{V}\cap F+K$, for which the dimension of $\mathcal{V}$ must
be finite. In order to demonstrate this bound, recall that, by $(ii)$ of Proposition \ref{prop:ldp_Jo}, there exists an $\Omega_o\in\mathcal{F}$
with $\prob[\Omega_o]=1$ such that
\begin{equation*}
\inf_{\delta>0}\bigg\{\limsup_{t\uparrow\infty} \frac{1}{t}\log\mu_{\omega,t}(B_{w,\delta})\bigg\}\le-J_o(w) 
\end{equation*}
for all $w\in\rew$ and $\omega\in\Omega_o$. Fix $\omega\in\Omega_o$ and $\epsilon>0$. Then, for each $w$ there exists $\delta_w>0$ such that $\limsup_{t\uparrow\infty}\frac{1}{t}\log\mu_{\omega,t}(B_{w,\delta_w})\le -J_o(w)+\epsilon$. The number $\delta_w$ can be chosen so small that $\|\varphi\|\delta_w\le\epsilon$. By compactness, there exist finitely many points $w_1,\ldots,w_n$ in $\mathcal{V}\cap F+K$ such that $\mathcal{V}\cap F+K\subseteq\cup_{i=1}^nB_{w_i,\delta_{w_i}}$. This gives
\begin{align}
\nonumber
E_\omega\bigg[\mathds{1}_{\big\{\frac{W_t}{t}\in \mathcal{V}\cap F+K,\,t\in\mathcal{T}\big\}}\ee^{\varphi(W_t)+H_{\omega,t}}\bigg]
&\le \sum_{i=1}^nE_\omega\bigg[\mathds{1}_{\big\{\frac{W_t}{t}\in B_{w_i,\delta_{w_i}},\,t\in\mathcal{T}\big\}}
\ee^{\varphi(W_t)+H_{\omega,t}}\bigg]\\
\nonumber
&\le\sum_{i=1}^n \ee^{t\varphi(w_i)+t\|\varphi\|\delta_{w_i}}\mu_{\omega,t}(B_{w_i,\delta_{w_i}})\\
\nonumber
&\le\sum_{i=1}^n \ee^{t\varphi(w_i)+t\epsilon}\mu_{\omega,t}(B_{w_i,\delta_{w_i}}),
\end{align}
and it therefore follows that 
\begin{align}
\nonumber
\limsup_{t\uparrow\infty}&\frac{1}{t}\log  E_\omega\bigg[\mathds{1}_{\big\{\frac{W_t}{t}\in \mathcal{V}\cap F+K,\,t\in\mathcal{T}\big\}}
\ee^{\varphi(W_t)+H_{\omega,t}}\bigg]\\
\nonumber
&\le\max\Big\{\varphi(w_1)-J_o(w_1),\ldots,\varphi(w_n)-J_o(w_n)\Big\}+2\epsilon\le J_o^\star(\varphi)+2\epsilon.
\end{align}
The arbitrariness of $\epsilon$ demonstrates (\ref{lemma:aux_prima}).
\end{proof}


\subsection{The full LDP}
\label{sec:FLDPc} 

It is well known that a weak LDP gives a full LDP with good rate function when it is combined with exponential tightness (see \cite{dembobook}, Lemma 1.2.18). Hence Corollary \ref{FLDPc} is a consequence of the following lemma.

\begin{lemma}
\label{lem:exp_tight}
Under the hypotheses of Corollary \ref{FLDPc}, $\pae$ the family of measures $\{\mu_{\omega,t}\}_{t\in\N}$ is exponentially tight, i.e., for each number $\lambda>0$ there exists a compact set $K\subset\rew$ such that
\begin{equation*}
\limsup_{t\uparrow\infty}\frac{1}{t}\log\mu_{\omega,t}(K^c)\le -\lambda.
\end{equation*}
\end{lemma}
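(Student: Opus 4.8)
The plan is to derive exponential tightness by a Chernoff estimate on $\|W_t\|$ fed by the moment hypothesis of Corollary~\ref{FLDPc}, using that in a finite-dimensional $\rew$ a closed ball is compact. Concretely, I would fix $\xi>0$ and $M\ge 0$ as in that hypothesis and $\eta\ge 0$ as in Assumption~\ref{ass3}, and set $h(\omega):=\sup_{s\in\N}\max\{0,v_\omega(s)-\eta s\}$ and $g(\omega):=\sup_{s\in\N}\max\{0,\log\int_\rew\ee^{\xi\|x\|-Ms}\lambda_\omega(\dd x|s)\}$, which are nonnegative and integrable by Assumption~\ref{ass3} and the hypothesis of Corollary~\ref{FLDPc}. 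For $\rho>0$ one has $\mu_{\omega,t}(\{w:\|w\|>\rho\})=E_\omega[\mathds{1}_{\{\|W_t\|>\rho t,\,t\in\mathcal{T}\}}\ee^{H_{\omega,t}}]$, and applying $\mathds{1}_{\{\|W_t\|>\rho t\}}\le\ee^{\xi\|W_t\|-\xi\rho t}$ followed by $\|W_t\|\le\sum_{i=1}^{N_t}\|X_i\|$ bounds this by $\ee^{-\xi\rho t}E_\omega[\mathds{1}_{\{t\in\mathcal{T}\}}\ee^{\sum_{i=1}^{N_t}\xi\|X_i\|+H_{\omega,t}}]$.

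The second step is to decompose the last expectation over the value $n=N_t$ subject to $T_n=t$, exactly as in the derivation of~(\ref{integrability_Z_3}): the explicit form of $P_\omega$ together with the conditional independence of the rewards given the waiting times rewrites it as $\sum_{n=1}^t\sum_{s_1+\cdots+s_n=t}\prod_{i=1}^n p_{f^{t_{i-1}}\omega}(s_i)\,\ee^{v_{f^{t_{i-1}}\omega}(s_i)}\int_\rew\ee^{\xi\|x\|}\lambda_{f^{t_{i-1}}\omega}(\dd x|s_i)$, with $t_0:=0$ and $t_i:=s_1+\cdots+s_i$. I would then use $\ee^{v_{f^{t_{i-1}}\omega}(s_i)}\le\ee^{h(f^{t_{i-1}}\omega)+\eta s_i}$ and $\int_\rew\ee^{\xi\|x\|}\lambda_{f^{t_{i-1}}\omega}(\dd x|s_i)\le\ee^{g(f^{t_{i-1}}\omega)+Ms_i}$; since $t_0<t_1<\cdots<t_{n-1}$ are distinct elements of $\{0,\dots,t-1\}$ and $h,g\ge 0$, the product of the disorder-dependent factors is at most $\ee^{\sum_{\tau=0}^{t-1}(h+g)(f^\tau\omega)}$, while $\sum_i(\eta+M)s_i=(\eta+M)t$ and $\sum_{n}\sum_{s_1+\cdots+s_n=t}\prod_i p_{f^{t_{i-1}}\omega}(s_i)=P_\omega[t\in\mathcal{T}]\le 1$. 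This gives $\mu_{\omega,t}(\{w:\|w\|>\rho\})\le\ee^{-\xi\rho t+(\eta+M)t+\sum_{\tau=0}^{t-1}(h+g)(f^\tau\omega)}$.

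For the conclusion I would take $\tfrac1t\log$ and $\limsup_{t\uparrow\infty}$: Birkhoff's ergodic theorem, applied to the single integrable function $h+g$, gives a set $\Omega_o\in\mathcal{F}$ with $\prob[\Omega_o]=1$ on which $\tfrac1t\sum_{\tau=0}^{t-1}(h+g)(f^\tau\omega)\to\Ex[h]+\Ex[g]$, hence $\limsup_{t\uparrow\infty}\tfrac1t\log\mu_{\omega,t}(\{w:\|w\|>\rho\})\le-\xi\rho+C$ for every $\omega\in\Omega_o$ and every $\rho>0$, where $C:=\eta+M+\Ex[h]+\Ex[g]<+\infty$ does not depend on $\rho$ or $\omega$. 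Given $\lambda>0$, choosing $\rho:=(\lambda+C)/\xi$ and the set $K:=\{w\in\rew:\|w\|\le\rho\}$ (compact because $\dim\rew<\infty$) then yields $\limsup_{t\uparrow\infty}\tfrac1t\log\mu_{\omega,t}(K^c)\le-\lambda$ for all $\omega\in\Omega_o$, which is the asserted exponential tightness.

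I expect no serious obstacle; the only point that needs care is the combinatorial bookkeeping in the second step, namely replacing the factors $\ee^{(h+g)(f^{t_{i-1}}\omega)}$ indexed by the renewal epochs $t_0,\dots,t_{n-1}$ by the full product $\ee^{\sum_{\tau=0}^{t-1}(h+g)(f^\tau\omega)}$, which works precisely because the truncation at $0$ makes $h$ and $g$ nonnegative. Finite dimensionality of $\rew$ is invoked only to turn the closed ball $K$ into a compact set, and everything else is a routine Chernoff-and-Birkhoff computation.
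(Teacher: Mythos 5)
Your proposal is correct and follows essentially the same route as the paper: the same Chernoff bound on $\xi\sum_{i=1}^{N_t}\|X_i\|$, the same decomposition over renewal configurations with the factors $\ee^{Ms_i}$ and $\ee^{\eta s_i}$ extracted, the same extension of the product over renewal epochs to all $\tau\in\{0,\dots,t-1\}$ using nonnegativity of the truncated suprema, and the same Birkhoff-plus-closed-ball conclusion. The only (immaterial) difference is that the paper bounds $H_{\omega,t}$ via \eqref{H_bound} before decomposing, whereas you absorb it into the configuration sum.
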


\begin{proof}[Proof of Lemma \ref{lem:exp_tight}]
Pick a number $\rho>0$, and let $\xi>0$ be as in Corollary \ref{FLDPc}. For $t\in\N$, a Chernoff-type bound, together with the bound in (\ref{H_bound}), gives
\begin{align}
\nonumber
\mu_{\omega,t}(B_{0,\rho}^c)
&:= E_\omega\bigg[\mathds{1}_{\big\{\frac{W_t}{t}\in B_{0,\rho}^c,\,t\in\mathcal{T}\big\}} \ee^{H_{\omega,t}}\bigg]\\
\nonumber
&\le E_\omega\Big[\mathds{1}_{\{\sum_{i=1}^{N_t}\|X_i\|\ge \rho t,\,t\in\mathcal{T}\}} \ee^{H_{\omega,t}}\Big]\\
\nonumber
&\le \ee^{-\xi\rho t}E_\omega\Big[\mathds{1}_{\{t\in\mathcal{T}\}} \ee^{\xi\sum_{i=1}^{N_t}\|X_i\|+H_{\omega,t}}\Big]\\
&\le\ee^{\sum_{\tau=0}^{t-1}\sup_{s\in\N}\max\{0,v_{f^\tau\omega}(s)-\eta s\}+\eta t-\xi\rho t}
E_\omega\Big[\mathds{1}_{\{t\in\mathcal{T}\}}\ee^{\xi \sum_{i=1}^{N_t}\|X_i\|}\Big].
\label{lem:exp_tight_1}
\end{align}
At the same time, with $M$ as in Corollary \ref{FLDPc}, we have
\begin{align}
\nonumber
E_\omega
&\Big[\mathds{1}_{\{t\in\mathcal{T}\}}\ee^{\xi \sum_{i=1}^{N_t}\|X_i\|}\Big]\\
\nonumber
&=\sum_{n=1}^t\sum_{s_1\in\N}\cdots\sum_{s_n\in\N}\mathds{1}_{\{t_n=t\}}\prod_{i=1}^n
p_{f^{t_{i-1}}\omega}(s_i)\int_\rew \ee^{\xi\|x\|}\lambda_{f^{t_{i-1}}\omega}(\dd x|s_i)\\
\nonumber
&=\ee^{Mt}\sum_{n=1}^t\sum_{s_1\in\N}\cdots\sum_{s_n\in\N}\mathds{1}_{\{t_n=t\}}\prod_{i=1}^n
p_{f^{t_{i-1}}\omega}(s_i)\int_\rew \ee^{\xi\|x\|-Ms_i}\lambda_{f^{t_{i-1}}\omega}(\dd x|s_i)\\
\nonumber
&\le\ee^{Mt}\sum_{n=1}^t\sum_{s_1\in\N}\cdots\sum_{s_n\in\N}\mathds{1}_{\{t_n=t\}}\prod_{i=1}^n
p_{f^{t_{i-1}}\omega}(s_i)\,\ee^{\sup_{s\in\N}\max\big\{0,\log\int_\rew \ee^{\xi\|x\|-Ms}\lambda_{f^{t_{i-1}}\omega}(\dd x|s)\big\}}\\
\nonumber
&\le\ee^{Mt+\sum_{\tau=0}^{t-1}\sup_{s\in\N}\max\big\{0,\log\int_\rew \ee^{\xi\|x\|-Ms}\lambda_{f^\tau\omega}(\dd x|s)\big\}} P_\omega[t\in\mathcal{T}]\\
&\le\ee^{Mt+\sum_{\tau=0}^{t-1}\sup_{s\in\N}\max\big\{0,\log\int_\rew \ee^{\xi\|x\|-Ms}\lambda_{f^\tau\omega}(\dd x|s)\big\}},
\label{lem:exp_tight_2}
\end{align}
where $t_0:=0$ and $t_i:=s_1+\cdots+s_i$ for $i\in\N$. Since $\Ex[\sup_{s\in\N}\max\{0, v_{\bdot}(s)-\eta s\}]<+\infty$ by Assumption \ref{ass3} and
$\Ex[\sup_{s\in\N}\max\{0,\log\int_\rew\ee^{\xi\|x\|-Ms}\lambda_{\bdot}(\dd x|s)\}]<+\infty$ by hypothesis, combining (\ref{lem:exp_tight_1}) with (\ref{lem:exp_tight_2}), taking logarithm, dividing by $t$, and letting $t\uparrow\infty$, we find 
\begin{align}
\nonumber
\limsup_{t\uparrow\infty}\frac{1}{t}\log\mu_{\omega,t}(B_{0,\rho}^c)&\le\Ex\Big[\sup_{s\in\N}\max\big\{0,v_{\bdot}(s)-\eta s\big\}\Big]
+\eta-\xi\rho\\
\nonumber
&\qquad +M+\Ex\bigg[\sup_{s\in\N}\max\bigg\{0,\log\int_\rew\ee^{\xi\|x\|-Ms}\lambda_{\bdot}(\dd x|s)\bigg\}\bigg] \qquad \pae
\end{align}
thanks to Birkhoff's ergodic theorem. In this way, given $\lambda>0$, the lemma is proved by choosing $\rho$ so large that the r.h.s.\ is
smaller than $\lambda$ and $K$ equal to the closure of $B_{0,\rho}$. 
\end{proof}


\section{Free LDPs}
\label{sec:freemodels}

In this section we deduce Theorem \ref{WLDPfree} and Corollary \ref{FLDPfree} from our results on the constrained pinning model. To this aim, we note that, for $t\in\N_0$ and $0\le n\le\tau\le t$, the disjoint events $\{T_n=\tau,\,T_{n+1}>t\}=\{T_n=\tau,\,S_{n+1}>t-\tau\}$ exhaust the entire space of configurations. According to (\ref{start_0}), the conditions $T_n=\tau$ and $S_{n+1}>t-\tau$ entail that $H_{\omega,t}=\sum_{i=1}^nv_{f^{T_{i-1}}\omega}(S_i)=H_{\omega,\tau}$ and $W_t=\sum_{i=1}^nX_i=W_\tau$ are independent of $S_{n+1}$, which in turn is distributed as $S_1$ in the environment $f^\tau\omega$. In this way, we find the identity of measures
\begin{align}
\nonumber
\nu_{\omega,t}:=E_\omega\bigg[\mathds{1}_{\big\{\frac{W_t}{t}\in \,\bdot\,\big\}} \ee^{H_{\omega,t}}\bigg]
&=\sum_{\tau=0}^t\sum_{n=0}^\tau E_\omega\bigg[\mathds{1}_{\big\{\frac{W_t}{t}\in \,\bdot\,,\,T_n=\tau,\,
S_{n+1}>t-\tau\big\}} \ee^{H_{\omega,t}}\bigg]\\
\nonumber
&=\sum_{\tau=0}^t\sum_{n=0}^\tau E_\omega\bigg[\mathds{1}_{\big\{\frac{W_\tau}{t}\in \,\bdot\,,\,T_n=\tau\big\}} 
\ee^{H_{\omega,\tau}}\bigg] P_{f^\tau\omega}[S_1>t-\tau]\\
&=\sum_{\tau=0}^tE_\omega\bigg[\mathds{1}_{\big\{\frac{W_\tau}{t}\in \,\bdot\,,\,\tau\in\mathcal{T}\big\}} \ee^{H_{\omega,\tau}}\bigg] P_{f^\tau\omega}[S_1>t-\tau].
\label{mu2nu}
\end{align}
Similarly, for $\varphi\in\rew^\star$, we have
\begin{align}
\nonumber
\int_\rew\ee^{t\varphi(w)}\nu_{\omega,t}(\dd w)=E_\omega\Big[\ee^{\varphi(W_t)+H_{\omega,t}}\Big]
&=\sum_{\tau=0}^tE_\omega\Big[\mathds{1}_{\{\tau\in\mathcal{T}\}} \ee^{\varphi(W_\tau)+H_{\omega,\tau}}\Big] P_{f^\tau\omega}[S_1>t-\tau]\\
&=\sum_{\tau=0}^t Z_{\omega,\tau}(\varphi)P_{f^\tau\omega}[S_1>t-\tau].
\label{CGF_free}
\end{align}
These formulas connect the free setting with the constrained setting, and represent the starting point to prove Theorem \ref{WLDPfree} and
Corollary \ref{FLDPfree}.

From now on we assume that \eqref{Ptailcond} holds. We define the rate function $I_\ell$ as the Legendre transform of $z_\ell:=\max\{z,\ell\}$. Recall that $z=z_o$ is proper convex and lower semi-continuous. The identity in (\ref{CGF_free}) immediately gives $(i)$ of Theorem \ref{WLDPfree} according to the following lemma.

\begin{lemma}
\label{lem:CGF_free}
For $\varphi\in\rew^\star$, $\lim_{t\uparrow\infty}\frac{1}{t}\log\int_\rew\ee^{t\varphi(w)}\nu_{\omega,t}(\dd w)=z_\ell(\varphi)$ $\pae$.
\end{lemma}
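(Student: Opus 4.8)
The plan is to extract from formula (\ref{CGF_free}), which writes $\int_\rew\ee^{t\varphi(w)}\nu_{\omega,t}(\dd w)=\sum_{\tau=0}^t Z_{\omega,\tau}(\varphi)\,P_{f^\tau\omega}[S_1>t-\tau]$, both a lower bound and an upper bound of the exponential growth rate, and show each equals $z_\ell(\varphi)=\max\{z(\varphi),\ell\}$. Throughout I work on a single full-measure set $\Omega_o$ on which simultaneously (a) $\frac{1}{t}\log Z_{\omega,t}(\varphi)\to z(\varphi)$ by Corollary \ref{Zc} (and $Z_{\omega,t}(\varphi)>0$ for $t\ge t_{\mathrm F}$), (b) the two lines of (\ref{Ptailcond}) hold, and (c) $\frac{1}{t}\log Z_{\omega,t}(0)\to z(0)$ so that $P_\omega[t\in\mathcal T]\le 1$ gives routine control; I also translate the $\omega$-dependence forward by intersecting with $\bigcap_{t}f^{-t}\Omega_o$ as is done elsewhere in the paper.

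For the \textbf{lower bound}, I keep only two kinds of terms in the sum. First, the term $\tau=t$ gives $Z_{\omega,t}(\varphi)\,P_{f^t\omega}[S_1>0]=Z_{\omega,t}(\varphi)$, so $\liminf_t\frac1t\log\int\ee^{t\varphi(w)}\nu_{\omega,t}(\dd w)\ge z(\varphi)$. Second, to capture $\ell$, I keep the term $\tau=0$: it is $Z_{\omega,0}(\varphi)\,P_\omega[S_1>t]=P_\omega[S_1>t]$, and the second line of (\ref{Ptailcond}) with $t=0$ gives $\liminf_t\frac1t\log P_\omega[S_1>t]\ge\ell$ (taking $\epsilon\downarrow0$). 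Hence the growth rate is $\ge\max\{z(\varphi),\ell\}=z_\ell(\varphi)$. (If $z(\varphi)=+\infty$ the first bound already gives everything.)

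For the \textbf{upper bound}, fix $\epsilon>0$ and split the sum at $\tau=(1-\epsilon)t$, say. For the ``large $\tau$'' block, bound $P_{f^\tau\omega}[S_1>t-\tau]\le 1$ and use that, by Corollary \ref{Zc} together with subadditivity of $F_t=-\log Z_{\omega,t}(\varphi)$, for every $\epsilon'>0$ there is a constant $c$ with $Z_{\omega,\tau}(\varphi)\le \ee^{(z(\varphi)+\epsilon')\tau+c}$ uniformly in $\tau\le t$ (here I use that $z(\varphi)=z_o(\varphi)<+\infty$ in the nontrivial case and the cocycle-type bound $Z_{\omega,\tau}(\varphi)\ge Z_{\omega,\tau'}(\varphi)Z_{f^{\tau'}\omega,\tau-\tau'}(\varphi)$); this contributes at most $\ee^{(z(\varphi)+\epsilon')t+o(t)}$. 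For the ``small $\tau$'' block, $\tau\le(1-\epsilon)t$, I bound $Z_{\omega,\tau}(\varphi)$ the same way and bound $\frac{1}{t-\tau}\log P_{f^\tau\omega}[S_1>t-\tau]$ using the \emph{first} line of (\ref{Ptailcond}): writing $s:=t-\tau\ge\epsilon t$, for large $t$ one has $\frac1s\log P_{f^\tau\omega}[S_1>s]\le \ell+\epsilon+\epsilon\frac{\tau}{s}\le \ell+\epsilon+\epsilon/\epsilon\cdot(\text{const})$, i.e. $\log P_{f^\tau\omega}[S_1>s]\le (\ell+\epsilon)s+\epsilon\tau+o(t)$; combined with $\log Z_{\omega,\tau}(\varphi)\le (z(\varphi)+\epsilon')\tau+c$ and $s+\tau=t$ this gives each such term $\le \ee^{\max\{z(\varphi)+\epsilon',\ell+\epsilon\}t+(\epsilon+\epsilon')t+o(t)}$ after bounding $\ell+\epsilon\le\max\{\dots\}$ crudely. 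Summing the $O(t)$ terms costs only a further $\log t=o(t)$, so $\limsup_t\frac1t\log\int\ee^{t\varphi(w)}\nu_{\omega,t}(\dd w)\le\max\{z(\varphi),\ell\}+C(\epsilon+\epsilon')$; let $\epsilon,\epsilon'\downarrow0$.

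The \textbf{main obstacle} is the uniformity in $\tau$: (\ref{Ptailcond}) only controls $\frac1s\log P_{f^\tau\omega}[S_1>s]$ as $s\to\infty$ with an $\epsilon t/s$ slack, so I must be careful that $s=t-\tau$ stays of order $t$ (handled by the split at $(1-\epsilon)t$) and that the $\epsilon\tau/s$ error term is genuinely absorbed — this is exactly what the $-\epsilon t/s$ / $+\epsilon t/s$ corrections in (\ref{Ptailcond}) are designed for. A secondary point to get right is the uniform subexponential upper bound $Z_{\omega,\tau}(\varphi)\le\ee^{(z(\varphi)+\epsilon')\tau+c}$ for all $\tau\le t$ on $\Omega_o$; this follows from $\lim_\tau\frac1\tau\log Z_{\omega,\tau}(\varphi)=z(\varphi)$ together with the supermultiplicativity of Lemma \ref{super_Z}, which forces $\frac1\tau\log Z_{\omega,\tau}(\varphi)$ to not overshoot $z(\varphi)$ by more than a vanishing amount except on an initial finite segment, absorbed into $c$. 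Once these two uniform estimates are in place, the rest is bookkeeping.
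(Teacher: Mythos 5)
Your proposal is correct and follows essentially the same route as the paper: both start from the identity (\ref{CGF_free}), get the lower bound $z_\ell(\varphi)$ from the $\tau=t$ term (giving $z(\varphi)$ via Corollary \ref{Zc}) and the $\tau=0$ term (giving $\ell$ via the second line of (\ref{Ptailcond})), and get the matching upper bound by bounding every summand with uniform exponential estimates on $Z_{\omega,\tau}(\varphi)$ and on $P_{f^\tau\omega}[S_1>s]$ coming from the first line of (\ref{Ptailcond}). The paper avoids your split of the sum at $\tau=(1-\epsilon)t$ by noting that, for $\lambda>z_\ell(\varphi)$, a single constant $C$ gives $Z_{\omega,\tau}(\varphi)\le C\ee^{\lambda\tau}$ and $P_{f^\tau\omega}[S_1>s]\le C\ee^{\lambda s+\epsilon\tau}$ for \emph{all} $s,\tau\in\N_0$, so the whole sum is at most $(t+1)C^2\ee^{(\lambda+\epsilon)t}$; this is only a cosmetic simplification of your bookkeeping.
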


\begin{proof}[Proof of Lemma \ref{lem:CGF_free}]
Fix $\varphi\in\rew^\star$. Corollary \ref{Zc} states that there exists a set $\Omega_o\in\mathcal{F}$ with $\prob[\Omega_o]=1$ such that $\lim_{t\uparrow\infty}\frac{1}{t}\log Z_{\omega,t}(\varphi)=z_o(\varphi)=z(\varphi)$ for all $\omega\in\Omega_o$. We can choose $\Omega_o$ so that also \eqref{Ptailcond} is satisfied for all $\omega\in\Omega_o$. Pick $\omega\in\Omega_o$. Since $\int_\rew\ee^{t\varphi(w)}\nu_{\omega,t}(\dd w)\ge \max\{Z_{\omega,t}(\varphi),P_\omega[S_1>t]\}$ by (\ref{CGF_free}) and $\liminf_{s\uparrow\infty}\frac{1}{s}\log P_\omega[S_1>s]\ge\ell$ by
the second line of \eqref{Ptailcond}, we can conclude that
\begin{equation*}
\liminf_{t\uparrow\infty}\frac{1}{t}\log\int_\rew\ee^{t\varphi(w)}\nu_{\omega,t}(\dd w)\ge\max\{z(\varphi),\ell\}=:z_\ell(\varphi).
\end{equation*}
This settles the first half of the proof and already proves the lemma when $z_\ell(\varphi)=+\infty$. If $z_\ell(\varphi)<+\infty$, then, given real numbers $\lambda>z_\ell(\varphi)$ and $\epsilon>0$, there exists a positive constant $C$ such that $Z_{\omega,\tau}(\varphi)\le C\ee^{\lambda \tau}$ and $P_{f^\tau\omega}[S_1>s]\le C\ee^{\lambda s+\epsilon\tau}$ for all $s,\tau\in\N_0$, the latter being a consequence of the first line of \eqref{Ptailcond}. Combining (\ref{CGF_free}) with these bounds, we find $\limsup_{t\uparrow\infty}\frac{1}{t}\log\int_\rew\ee^{t\varphi(w)}\nu_{\omega,t}(\dd w)\le\lambda+\epsilon$. The arbitrariness of $\lambda$ and $\epsilon$ shows that
\begin{equation*}
\limsup_{t\uparrow\infty}\frac{1}{t}\log\int_\rew\ee^{t\varphi(w)}\nu_{\omega,t}(\dd w)\le z_\ell(\varphi).
\qedhere
\end{equation*}
\end{proof}

The proof that the family $\{\nu_{\omega,t}\}_{t \in \N}$ satisfies a quenched weak LDP with the rate function $I_\ell$ is given in Section \ref{sec:weak_nu_ellinf} for the case $\ell=-\infty$ and in Section \ref{sec:weak_nu_ellfin} for the case $\ell>-\infty$. This verifies $(ii)$ of Theorem \ref{WLDPfree}. Section \ref{sec:FLDPfree} addresses the quenched full LDP of Corollary \ref{FLDPfree}.


\subsection{The weak LDP for infinite exponential tail constant}
\label{sec:weak_nu_ellinf}

When $\ell=-\infty$, we have $I_\ell(w)=\sup_{\varphi\in\rew^\star}\{\varphi(w)-z(\varphi)\}=J(w)=J_o(w)$. Thus, $(ii)$ of Theorem \ref{WLDPfree} for $\ell=-\infty$ follows from the following proposition.

\begin{proposition}
\label{prop:lower_ell_inf}
If $\ell=-\infty$, then $\pae$ the family $\{\nu_{\omega,t}\}_{t \in \N}$ satisfies the weak LDP with rate function $J_o$.
\end{proposition}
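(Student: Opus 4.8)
The plan is to transfer the weak LDP for $\{\mu_{\omega,t}\}_{t\in\N}$ (Proposition \ref{prop:ldp_Jo}) to $\{\nu_{\omega,t}\}_{t\in\N}$ through the identity \eqref{mu2nu}, working on the $\prob$-full-measure set $\Omega_o$ on which, simultaneously, the bounds $(i)$ and $(ii)$ of Proposition \ref{prop:ldp_Jo} hold, $\lim_{t\uparrow\infty}\frac1t\log Z_{\omega,t}(0)=z_o(0)$ with $z_o(0)$ finite (Corollary \ref{Zc} with $\varphi=0$), and \eqref{Ptailcond} holds with $\ell=-\infty$. Since $\ell=-\infty$ one has $I_\ell=J=J_o$, lower semi-continuous by Lemma \ref{prop_J}, and the case $\ell=-\infty$ in \eqref{Ptailcond} unwinds to the statement that for every $\epsilon>0$ and every $M>0$ there is an $s_\circ$ with $\log P_{f^\tau\omega}[S_1>s]\le-Ms+\epsilon\tau$ for all $\tau\in\N_0$ and $s\ge s_\circ$. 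The large deviation lower bound is then immediate: fixing $\omega\in\Omega_o$ and $G\subseteq\rew$ open, keeping only the $\tau=t$ term in \eqref{mu2nu} and using $P_{f^t\omega}[S_1>0]=1$ (as $S_1\ge1$) gives $\nu_{\omega,t}(G)\ge\mu_{\omega,t}(G)$ for every $t$, whence $\liminf_{t\uparrow\infty}\frac1t\log\nu_{\omega,t}(G)\ge-\inf_{w\in G}J_o(w)$ by the lower bound in Proposition \ref{prop:ldp_Jo}.

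For the large deviation upper bound, fix $\omega\in\Omega_o$, a compact $K\subset\rew$, and a real number $\lambda<\inf_{w\in K}J_o(w)$ (possible since $J_o\ge-z_o(0)>-\infty$); it suffices to show $\limsup_{t\uparrow\infty}\frac1t\log\nu_{\omega,t}(K)\le-\lambda$ and then let $\lambda\uparrow\inf_KJ_o$. By $(ii)$ of Proposition \ref{prop:ldp_Jo}, for each $w\in K$ one can pick $\delta_w\in(0,1)$ and $\gamma_w>0$ with $\limsup_{\tau\uparrow\infty}\frac1\tau\log\mu_{\omega,\tau}(B_{w,3\delta_w})\le-\lambda-\gamma_w$; extracting a finite subcover $K\subseteq\bigcup_{j=1}^nB_{w_j,\delta_{w_j}}$ reduces the task to bounding each $\nu_{\omega,t}(B_{w,\delta})$ with $w=w_j$, $\delta=\delta_{w_j}$. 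Since $\{W_\tau/t\in B_{w,\delta}\}=\{W_\tau/\tau\in B_{(t/\tau)w,(t/\tau)\delta}\}$ for $1\le\tau\le t$, the identity \eqref{mu2nu} becomes
\begin{equation*}
\nu_{\omega,t}(B_{w,\delta})=\mathds{1}_{\{0\in B_{w,\delta}\}}\,P_\omega[S_1>t]+\sum_{\tau=1}^t\mu_{\omega,\tau}\big(B_{(t/\tau)w,(t/\tau)\delta}\big)\,P_{f^\tau\omega}[S_1>t-\tau],
\end{equation*}
and I would split the sum at $\tau=(1-\eta)t$ for a small $\eta\in(0,1)$. In the near range $(1-\eta)t\le\tau\le t$ one has $t/\tau\le1/(1-\eta)$, hence $B_{(t/\tau)w,(t/\tau)\delta}\subseteq B_{w,3\delta}$ provided $\eta\le2\delta/(\|w\|+3\delta)$; bounding the tail by $1$ and using $\mu_{\omega,\tau}(B_{w,3\delta})\le\ee^{(-\lambda-\gamma_w/2)\tau}$ for all large $\tau$, the near part is at most $(t+1)\max_{(1-\eta)t\le\tau\le t}\ee^{(-\lambda-\gamma_w/2)\tau}$, which has exponential rate $\le-\lambda$ once $\eta$ is additionally taken small enough (no extra condition if $-\lambda-\gamma_w/2\ge0$, and $\eta\le1-\lambda/(\lambda+\gamma_w/2)$ otherwise).

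In the far range $0\le\tau<(1-\eta)t$ one has $t-\tau>\eta t\to\infty$; fixing $\epsilon_1\in(0,1)$, choosing $M$ so large that $(z_o(0)+2\epsilon_1)(1-\eta)-M\eta\le-\lambda$, and using $\mu_{\omega,\tau}(\cdot)\le Z_{\omega,\tau}(0)\le C\,\ee^{(z_o(0)+\epsilon_1)\tau}$ (with $C=C(\omega,\epsilon_1)$ finite, since $\frac1\tau\log Z_{\omega,\tau}(0)\to z_o(0)$) together with $P_{f^\tau\omega}[S_1>t-\tau]\le\ee^{-M(t-\tau)+\epsilon_1\tau}$ valid for $t$ large, each far term is at most $C\,\ee^{(z_o(0)+2\epsilon_1+M)\tau-Mt}\le C\,\ee^{[(z_o(0)+2\epsilon_1)(1-\eta)-M\eta]t}$, so the far part — together with the $\tau=0$ term, which is bounded by $P_\omega[S_1>t]\le\ee^{-Mt}$ — also has rate $\le-\lambda$. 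Summing the near and far contributions yields $\limsup_{t\uparrow\infty}\frac1t\log\nu_{\omega,t}(B_{w,\delta})\le-\lambda$, hence $\limsup_{t\uparrow\infty}\frac1t\log\nu_{\omega,t}(K)\le-\lambda$, which closes the argument.

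I expect the scaling mismatch $W_\tau/t$ versus $W_\tau/\tau$ in \eqref{mu2nu} to be the main obstacle: for $\tau<t$ the measure $\mu_{\omega,\tau}$ is evaluated at the dilated ball $B_{(t/\tau)w,(t/\tau)\delta}$, whose centre is pushed away from $w$ and whose radius is inflated, so one cannot simply read off the rate $-J_o(w)$. This is what forces the near/far decomposition and the careful tuning of $\eta$; the decisive point is the strict slack $\gamma_w$ produced by $(ii)$ of Proposition \ref{prop:ldp_Jo} (ultimately coming from $J_o(w)\ge\inf_KJ_o>\lambda$), which is exactly what absorbs the $(1-\eta)$-loss in the near range, while the superexponential tail decay afforded by $\ell=-\infty$ trivially annihilates the far range.
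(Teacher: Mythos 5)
Your proposal is correct and follows essentially the same route as the paper: the lower bound via $\nu_{\omega,t}\ge\mu_{\omega,t}$, and the upper bound by splitting the sum in \eqref{mu2nu} at $\tau=(1-\eta)t$, controlling the near range with the ball upper bound for $\mu_{\omega,\tau}$ (after absorbing the $t/\tau$ dilation into a slightly larger ball) and annihilating the far range with the crude bound $Z_{\omega,\tau}(0)$ together with the superexponential tail decay coming from $\ell=-\infty$. The only cosmetic difference is that you fix a strict slack $\gamma_w$ and tune $\eta$ once, whereas the paper lets its splitting parameter $\epsilon\downarrow 0$ at the end; both bookkeeping schemes close the argument.
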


\begin{proof}[Proof of Proposition \ref{prop:lower_ell_inf}]
The large deviation lower bound for open sets is immediate from Proposition \ref{prop:ldp_Jo}, as $\nu_{\omega,t}(A)\ge\mu_{\omega,t}(A)$ for all $t\in\N$ and $A\in\Brew$. We verify the large deviation upper bound for compact sets.

According to Corollary \ref{Zc}, $(ii)$ of Proposition \ref{prop:ldp_Jo}, and the first line of \eqref{Ptailcond} with $\ell=-\infty$, there exists a set $\Omega_o\in\mathcal{F}$ with $\prob[\Omega_o]=1$ such that, for every $\omega\in\Omega_o$, we have $\lim_{t\uparrow\infty}\frac{1}{t}\log Z_{\omega,t}(0)=z_o(0)$ with $z_o(0)$ finite, $\inf_{\delta>0}\{\limsup_{t\uparrow\infty}\frac{1}{t}\log\mu_{\omega,t}(B_{w,\delta})\}\le -J_o(w)$ for $w\in\rew$, and
\begin{equation}
\label{tail_prob}
\limsup_{s\uparrow\infty}\sup_{\tau\in\N_0}\bigg\{\frac{1}{s}\log P_{f^\tau\omega}[S_1>s]-\frac{\tau}{s}\bigg\}=-\infty.
\end{equation}
We will show that, for all $\omega\in\Omega_o$ and $w\in\rew$,
\begin{equation}
\inf_{\delta>0}\bigg\{\limsup_{t\uparrow\infty}\frac{1}{t}\log\nu_{\omega,t}(B_{w,\delta})\bigg\}\le-J_o(w).
\label{prop:lower_ell_inf_1}
\end{equation}
This gives the quenched large deviation upper bound for compact sets with rate function $J_o$, as in the proof of Proposition \ref{prop:ldp_Jo}.

Pick $\omega\in\Omega_o$, $w\in\rew$, and a real number $\lambda<J_o(w)$. Then there exists an $\eta>0$ such that $\limsup_{t\uparrow\infty}\frac{1}{t}\log\mu_{\omega,t}(B_{w,4\eta})<-\lambda$. In turn, there exists a positive constant $C$ that provides the inequalities $\mu_{\omega,\tau}(B_{w,4\eta})\le C\ee^{-\lambda \tau}$ and $Z_{\omega,\tau}(0)\le C\ee^{z_o(0)\tau+\tau}$ for all $\tau\in\N_0$. Given $\epsilon\in(0,1/2)$ such that
$\epsilon\|w\|\le\eta$, the conditions $t/2\le(1-\epsilon)t<\tau\le t$ and $\frac{W_\tau}{t}\in B_{w,\eta}$ imply $\frac{W_\tau}{\tau}\in B_{w,4\eta}$, since $\|W_\tau-w\tau\|\le\|W_\tau-wt\|+(t-\tau)\|w\|<\eta t+\epsilon\|w\|t\le 2\eta t<4\eta\tau$. Hence it follows from (\ref{mu2nu}) that
\begin{align}
\nonumber
\nu_{\omega,t}(B_{w,\eta})&=\sum_{\tau=0}^tE_\omega\bigg[\mathds{1}_{\big\{\frac{W_\tau}{t}\in B_{w,\eta},\,\tau\in\mathcal{T}\big\}} 
\ee^{H_{\omega,\tau}}\bigg] P_{f^\tau\omega}[S_1>t-\tau]\\
\nonumber
&\le\sum_{\tau=0}^t\mathds{1}_{\{\tau\le(1-\epsilon)t\}}Z_{\omega,\tau}(0) P_{f^\tau\omega}[S_1>t-\tau]
+\sum_{\tau=0}^t\mathds{1}_{\{\tau>(1-\epsilon)t\}}\mu_{\omega,\tau}(B_{w,4\eta})\\
\nonumber
&\le tC\ee^{|z_o(0)|t+2t} \sup_{\tau\in\N_0}\big\{P_{f^\tau\omega}[S_1>\epsilon t]\ee^{-\tau}\big\}
+tC\ee^{-\lambda t+\epsilon|\lambda|t}.
\end{align}
Taking the logarithm, dividing by $t$, letting $t\uparrow\infty$, and invoking (\ref{tail_prob}), we get $\limsup_{t\uparrow\infty}\frac{1}{t}\log\nu_{\omega,t}(B_{w,\eta})$ $\le-\lambda+\epsilon|\lambda|$. Letting $\epsilon\downarrow 0$, this gives $\inf_{\delta>0}\{\limsup_{t\uparrow\infty}\frac{1}{t}\log\nu_{\omega,t}(B_{w,\delta})\}\le-\lambda$, which proves \eqref{prop:lower_ell_inf_1} thanks to the arbitrariness of $\lambda$.
\end{proof}


\subsection{The weak LDP for finite exponential tail constant}
\label{sec:weak_nu_ellfin}

The following lemma shows that the family $\{\nu_{\omega,t}\}_{t \in \N}$ satisfies the quenched large deviation lower bound for open sets
with rate function $I_\ell$ even when $\ell>-\infty$.

\begin{lemma}
\label{lem:lower_bound_free}
Assume that $\ell>-\infty$. Then $\pae$ the family $\{\nu_{\omega,t}\}_{t \in \N}$ satisfies the large deviation lower bound with rate function $I_\ell$, i.e., $\liminf_{t\uparrow\infty} \frac{1}{t}\log\nu_{\omega,t}(G) \ge-\inf_{w\in G} I_\ell(w)$ for all $G\subseteq\rew$ open.
\end{lemma}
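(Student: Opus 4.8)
The plan is to read off from the decomposition \eqref{mu2nu} the cheapest way to realise a total reward $W_t\approx wt$ in the free model: let the constrained model run up to some time $\tau\approx at$ with $a\in(0,1]$, accumulating a reward $W_\tau\approx wt$ (so that $W_\tau/\tau\approx w/a$, at logarithmic cost $\approx\tau J(w/a)=atJ(w/a)$ by Proposition~\ref{prop:ldp_Jo} and the identification $J_o=J$), and then make a single waiting time exceeding $t-\tau$, which under \eqref{Ptailcond} costs $\approx -\ell(t-\tau)=-\ell(1-a)t$ on the logarithmic scale. Optimising over $a$ should reproduce the rate function $I_\ell$.

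Carrying this out, I first fix $w\in\rew$, $a\in(0,1]$ and $\delta>0$, and set $\tau_n:=\lfloor an\rfloor$ and $\delta':=\delta/(2a)$. Since $W_{\tau_n}/n=(\tau_n/n)\,W_{\tau_n}/\tau_n$ and $\tau_n/n\to a$, one has $\{W_{\tau_n}/\tau_n\in B_{w/a,\delta'}\}\subseteq\{W_{\tau_n}/n\in B_{w,\delta}\}$ for all large $n$; keeping only the $\tau=\tau_n$ summand in \eqref{mu2nu} then yields $\nu_{\omega,n}(B_{w,\delta})\ge\mu_{\omega,\tau_n}(B_{w/a,\delta'})\,P_{f^{\tau_n}\omega}[S_1>n-\tau_n]$ for all large $n$. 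I then work on the full-measure event on which Proposition~\ref{prop:ldp_Jo}$(i)$ holds (simultaneously for all $w,\delta$) and on which \eqref{Ptailcond} holds. There $\liminf_n\tfrac1n\log\mu_{\omega,\tau_n}(B_{w/a,\delta'})\ge -aJ(w/a)$ because $\tau_n/n\to a$, and $\liminf_n\tfrac1n\log P_{f^{\tau_n}\omega}[S_1>n-\tau_n]\ge(1-a)\ell$ by the second line of \eqref{Ptailcond} applied with $s=n-\tau_n\to\infty$ and $t=\tau_n$: the ratio $\tau_n/(n-\tau_n)$ stays bounded, so after letting $\epsilon\downarrow0$ the correction term $\epsilon t/s$ disappears (this is the step that uses $\ell>-\infty$). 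Hence $\liminf_n\tfrac1n\log\nu_{\omega,n}(B_{w,\delta})\ge -aJ(w/a)+(1-a)\ell$.

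Since every $w\in G$ admits some $\delta>0$ with $B_{w,\delta}\subseteq G$, it follows that $\liminf_n\tfrac1n\log\nu_{\omega,n}(G)\ge\sup_{w\in G}\sup_{a\in(0,1]}\{-aJ(w/a)+(1-a)\ell\}=-\inf_{w\in G}h(w)$, where $h(w):=\inf_{a\in(0,1]}\{aJ(w/a)-(1-a)\ell\}$. To finish I would identify $\inf_{w\in G}h(w)$ with $\inf_{w\in G}I_\ell(w)$: a perspective-function computation shows that $h$ is convex, its epigraph being the convex hull of $\mathrm{epi}\,J$ together with the half-line $\{0\}\times[-\ell,+\infty)$, while standard convex duality (writing $I_\ell=(\max\{z,\ell\})^\star$ and using that the conjugate of a minimum is the maximum of the conjugates, together with the biconjugate theorem) identifies $I_\ell$ with the lower semicontinuous hull of $h$. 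As a function and its lower semicontinuous hull have the same infimum over any open set, $\inf_GI_\ell=\inf_Gh$, which completes the argument.

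The main obstacle is this convex-analytic identification: the lower bound produced above is a convex but possibly non-closed function $h$, and one must argue that replacing $h$ by its closure (which is precisely what turns it into $I_\ell$) leaves infima over open sets unchanged. A secondary, purely technical nuisance is making the tail estimate uniform along the linearly growing sequence $\tau_n=\lfloor an\rfloor$; this is exactly what the $\epsilon\,t/s$ correction in \eqref{Ptailcond} is designed to absorb, and it is also where the standing hypothesis $\ell>-\infty$ enters. Finally, all of the estimates can be arranged on a single full-measure set, since Proposition~\ref{prop:ldp_Jo}$(i)$ and \eqref{Ptailcond} already hold simultaneously for all the relevant parameters.
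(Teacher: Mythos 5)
Your probabilistic argument is the same as the paper's: you use the decomposition \eqref{mu2nu}, keep the single summand $\tau=\lfloor at\rfloor$, and combine Proposition~\ref{prop:ldp_Jo}$(i)$ with the second line of \eqref{Ptailcond} to get $\liminf_t\frac1t\log\nu_{\omega,t}(B_{w,\delta})\ge -aJ(w/a)+(1-a)\ell$ for each $a\in(0,1]$, exactly as in the paper's bound \eqref{lower_free_2} (note that $aJ(w/a)=\sup_{\varphi}\{\varphi(w)-az(\varphi)\}$, so your bound and the paper's coincide for $a=\beta>0$). Where you diverge is the endgame. The paper first extends the bound to $\beta=0$ by perturbing $w$ to $w+\epsilon u$ along a direction $u$ coming from an affine minorant of $z$, and then applies Sion's minimax theorem to obtain the \emph{pointwise} identity $\inf_{\beta\in[0,1]}\sup_{\varphi}\{\cdots\}=I_\ell(w)$, hence $\liminf\ge -I_\ell(w)$ on every ball. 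You instead stop at the non-closed convex function $h(w)=\inf_{a\in(0,1]}\{aJ(w/a)-(1-a)\ell\}$ and argue that $\inf_G h=\inf_G I_\ell$ for open $G$ via biconjugation and the fact that lower semicontinuous regularisation preserves infima over open sets. Both routes are valid; yours trades the minimax step for a convex-duality step and only yields the open-set form of the bound (which is all the lemma asks for), while the paper's yields the stronger pointwise bound on balls.

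One imprecision to repair in your duality step: since $(\min\{J,\,\delta_{\{0\}}-\ell\})^\star=\max\{z,\ell\}=z_\ell$, the biconjugate theorem gives $I_\ell=\mathrm{cl}\,\mathrm{co}\bigl(\min\{J,\,\delta_{\{0\}}-\ell\}\bigr)$, where $\delta_{\{0\}}$ is the convex indicator of $\{0\}$; the convex hull here is $\min\{h,\,\delta_{\{0\}}-\ell\}$, which agrees with $h$ away from $w=0$ but may be strictly smaller at $w=0$ (namely $\min\{h(0),-\ell\}$). So $I_\ell$ is the closure of $h\wedge(\delta_{\{0\}}-\ell)$, not of $h$ itself, and your claim ``$\inf_G I_\ell=\inf_G h$'' needs the extra observation that whenever $0\in G$ one has $\inf_G h\le-\ell$: take any $w_0$ with $J(w_0)<+\infty$ and note $h(aw_0)\le aJ(w_0)-(1-a)\ell\to-\ell$ as $a\downarrow0$, with $aw_0\in G$ for small $a$. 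With that addendum, $\inf_G I_\ell=\inf_G\bigl(h\wedge(\delta_{\{0\}}-\ell)\bigr)=\min\{\inf_Gh,-\ell\}=\inf_Gh$ and your proof closes. (Also, a minor point: the hypothesis $\ell>-\infty$ is not really what makes the $\epsilon t/s$ correction disappear; it is what makes the term $(1-a)\ell$ and the function $z_\ell$ nondegenerate, the case $\ell=-\infty$ being treated separately in Proposition~\ref{prop:lower_ell_inf}.)
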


\begin{proof}[Proof of Lemma \ref{lem:lower_bound_free}]
According to $(i)$ of Proposition \ref{prop:ldp_Jo}, there exists an event $\Omega_o\in\mathcal{F}$ with $\prob[\Omega_o]=1$ such that $\liminf_{t\uparrow\infty}\frac{1}{t}\log\mu_{\omega,t}(B_{w,\delta})\ge -J_o(w)$ for all $\omega\in\Omega_o$, $w\in\rew$, and $\delta>0$. By the second line of \eqref{Ptailcond}, we can even suppose that $\Omega_o$ satisfies $\liminf_{s\uparrow\infty}\inf_{\tau\in\N_0}\{\frac{1}{s}\log P_{f^\tau\omega}[S_1>s]+\epsilon \frac{\tau}{s}\}\ge\ell$ for all $\omega\in\Omega_o$ with any number $\epsilon>0$.  Pick $\omega\in\Omega_o$.  We verify that
\begin{equation}
\liminf_{t\uparrow\infty}\frac{1}{t}\log \nu_{\omega,t}(B_{w,\delta})\ge-I_\ell(w)
\label{lower_free_1}
\end{equation}
for all $w\in\rew$ and $\delta>0$, which implies the quenched large deviation lower bound for open sets, as we have seen in the proof of
Proposition \ref{prop:ldp_Jo}.

Fix $w\in\rew$ and $\delta>0$. We will prove that
\begin{equation}
\liminf_{t\uparrow\infty}\frac{1}{t}\log \nu_{\omega,t}(B_{w,\delta})\ge
-\sup_{\varphi\in\scriptsize{\dom} z}\Big\{\varphi(w)-\beta z(\varphi)-(1-\beta)\ell\Big\}
\label{lower_free_2}
\end{equation}
for all $\beta\in[0,1]$, where $\dom z:=\{\varphi\in\rew^\star:z(\varphi)<+\infty\}$ is the effective domain of the proper convex lower semi-continuous function $z=z_o$. This gives (\ref{lower_free_1}) as follows. The function that maps $(\beta,\varphi)\in[0,1]\times\dom z$ to the real number $\varphi(w)-\beta z(\varphi)-(1-\beta)\ell$ is concave and upper semi-continuous with respect to $\varphi$ for each fixed $\beta\in[0,1]$, and is convex and continuous with respect to $\beta$ for each fixed $\varphi\in\dom z$. Due to compactness of the closed interval $[0,1]$, Sion's minimax theorem allows us to exchange the infimum over $\beta\in[0,1]$ and the supremum over $\varphi\in\dom z$, to conclude that
\begin{align}
\nonumber
I_\ell(w):=\sup_{\varphi\in\rew^\star}\Big\{\varphi(w)-z_\ell(\varphi)\Big\}
&=\adjustlimits\sup_{\varphi\in\scriptsize{\dom}z}
\inf_{\beta\in[0,1]}\Big\{\varphi(w)-\beta z(\varphi)-(1-\beta)\ell\Big\}\\
\nonumber
&=\adjustlimits\inf_{\beta\in[0,1]}\sup_{\varphi\in\scriptsize{\dom}z}\Big\{\varphi(w)-\beta z(\varphi)-(1-\beta)\ell\Big\}.
\end{align}
In this way, (\ref{lower_free_2}) demonstrates (\ref{lower_free_1}) after we take the supremum over $\beta$.

We prove (\ref{lower_free_2}), considering the case $\beta>0$ first. Pick $\beta\in(0,1]$ and denote by $\tau_t$ the largest integer less than or equal to $\beta t$. Focus on the sufficiently large integers $t$ satisfying $\tau_t>0$ and $\|w\|<\beta\delta t/2$. Then the condition $\frac{W_{\tau_t}}{\tau_t}\in B_{w/\beta,\delta/2}$ implies $\frac{W_{\tau_t}}{t}\in B_{w,\delta}$. Indeed, observing that $0\le t-\tau_t/\beta<1/\beta$, if $\|W_{\tau_t}-\tau_tw/\beta\|<\delta \tau_t/2$, then $\|W_{\tau_t}-tw\|\le\|W_{\tau_t}-\tau_t w/\beta\|+(t-\tau_t/\beta)\|w\|<\delta\tau_t/2+\|w\|/\beta<\delta t$ as $\tau_t\le t$ and $\|w\|<\beta\delta t/2$. Thus, keeping only the term corresponding to $\tau=\tau_t>0$ in the r.h.s.\ of (\ref{mu2nu}), we obtain
\begin{align}
\nonumber
\nu_{\omega,t}(B_{w,\delta})
&\ge E_\omega\bigg[\mathds{1}_{\big\{\frac{W_{\tau_t}}{t}\in B_{w,\delta},\,\tau_t\in\mathcal{T}\big\}}\ee^{H_{\omega,\tau_t}}\bigg]
P_{f^{\tau_t}\omega}\big[S_1>t-\tau_t\big]\\
\nonumber
&\ge E_\omega\bigg[\mathds{1}_{\big\{\frac{W_{\tau_t}}{\tau_t}\in B_{w/\beta,\delta/2},\,\tau_t\in\mathcal{T}\big\}}
\ee^{H_{\omega,\tau_t}}\bigg]P_{f^{\tau_t}\omega}\big[S_1>t-\tau_t\big]\\
&=\mu_{\omega,\tau_t}\big(B_{w/\beta,\delta/2}\big)P_{f^{\tau_t}\omega}\big[S_1>t-\tau_t\big].
\label{lower_free_3}
\end{align}
Since $\lim_{t\uparrow\infty}\tau_t/t=\beta$, we have $\liminf_{t\uparrow\infty}\frac{1}{t}\log\mu_{\omega,\tau_t}(B_{w/\beta,\delta/2})\ge-\beta
J_o(w/\beta)$ and
\begin{equation*}
\liminf_{t\uparrow\infty}\frac{1}{t}\log P_{f^{\tau_t}\omega}\big[S_1>t-\tau_t\big]\ge(1-\beta)\ell.
\end{equation*}
The latter limit is trivial for $\beta=1$ because $P_\omega[S_1>0]=1$, whereas for $\beta<1$ it follows from the fact that, for any $\epsilon>0$,
\begin{align}
\nonumber
\liminf_{t\uparrow\infty}\frac{1}{t}\log P_{f^{\tau_t}\omega}\big[S_1>t-\tau_t\big]
&\ge (1-\beta) \liminf_{s\uparrow\infty}\inf_{\tau\in\N_0}\bigg\{\frac{1}{s}\log P_{f^\tau\omega}\big[S_1>s\big]
+\epsilon\frac{\tau}{s}\bigg\}-\epsilon\\
\nonumber
&\ge (1-\beta)\ell-\epsilon.
\end{align}
These arguments, in combination with (\ref{lower_free_3}), show that
\begin{align}
\nonumber
\liminf_{t\uparrow\infty}\frac{1}{t}\log \nu_{\omega,t}(B_{w,\delta})&\ge-\beta J_o(w/\beta)+(1-\beta)\ell\\
\nonumber  
&=-\sup_{\varphi\in\rew^\star}\big\{\varphi(w)-\beta z_o(\varphi)\big\}+(1-\beta)\ell\\
\nonumber
&=-\sup_{\varphi\in\scriptsize{\dom}z}\Big\{\varphi(w)-\beta z(\varphi)-(1-\beta)\ell\Big\},
\end{align}
which is (\ref{lower_free_2}) under the hypothesis $\beta>0$.

In order to settle the case $\beta=0$, we observe that, as $z$ is proper convex and lower semi-continuous, there exist a point $u\in\rew$ and a real number $c$ such that $z(\varphi)\ge\varphi(u)-c$ for every $\varphi\in\rew^\star$ (see \cite{zalinescubook}, Theorem 2.2.6). For all $\epsilon\in(0,1)$ such that $\epsilon\|u\|<\delta/2$ we have $B_{w+\epsilon u,\delta/2}\subseteq B_{w,\delta}$. Thus, the bound (\ref{lower_free_2}) with $\epsilon$ in place of $\beta$ and $w+\epsilon u$ in place of $w$ gives
\begin{align}
\nonumber
\liminf_{t\uparrow\infty}\frac{1}{t}\log \nu_{\omega,t}(B_{w,\delta})
&\ge\liminf_{t\uparrow\infty}\frac{1}{t}\log \nu_{\omega,t}(B_{w+\epsilon u,\delta/2})\\
\nonumber
&\ge-\sup_{\varphi\in \scriptsize{\dom}z}\Big\{\varphi(w+\epsilon u)-\epsilon z(\varphi)-(1-\epsilon)\ell\Big\}\\
\nonumber
&\ge-\sup_{\varphi\in \scriptsize{\dom}z}\big\{\varphi(w)-\ell\big\}-\epsilon(c+\ell).
\end{align}
From here we obtain (\ref{lower_free_2}) corresponding to $\beta=0$ by letting $\epsilon\downarrow 0$. 
\end{proof}

In the case $\ell>-\infty$, our derivation of the large deviation upper bound for compact sets requires that $\rew^\star$ is separable. In fact, due to the difficulty to deal with the measure $E_\omega\big[\mathds{1}_{\{\frac{W_\tau}{t}\in \,\bdot\,,\,\tau\in\mathcal{T}\}} \ee^{H_{\omega,\tau}}\big]$ in (\ref{mu2nu}) when $\tau$ grows slower than $t$, we resort to an estimate based on the cumulant generating function. The latter needs that $\rew^\star$ is not too large in order to ensure uniformity with respect to the disorder. The following lemma shows what the separability of
$\rew^\star$ is needed for.

\begin{lemma}
\label{lem:Legendre_sep}
If $\rew^\star$ is separable, then there exists a countable subset $\mathcal{G}\subset\rew^\star$ such that, for $w\in\rew$,
\begin{equation*}
I_\ell(w)=\sup_{\varphi\in\mathcal{G}}\big\{\varphi(w)-z_\ell(\varphi)\big\}.
\end{equation*}
\end{lemma}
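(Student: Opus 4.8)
The plan is to construct $\mathcal{G}$ from a countable dense subset of the epigraph of $z_\ell$ and to use lower semi-continuity of $z_\ell$ to transfer the supremum defining $I_\ell(w)$ to this countable set.

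First I would record that $z_\ell=\max\{z,\ell\}$ is proper and lower semi-continuous, inheriting these properties from $z=z_o$ (the maximum of a lower semi-continuous function and a constant being lower semi-continuous, and $z_\ell$ being finite on the nonempty set $\dom z$). Hence the epigraph $\mathrm{epi}\,z_\ell:=\{(\varphi,t)\in\rew^\star\times\Rl:z_\ell(\varphi)\le t\}$ is a nonempty closed subset of $\rew^\star\times\Rl$. Since $\rew^\star$ is separable, the metric space $\rew^\star\times\Rl$ is separable, and so is its subset $\mathrm{epi}\,z_\ell$. I would then fix a countable set $\{(\varphi_n,t_n)\}_{n\in\N}$ dense in $\mathrm{epi}\,z_\ell$ and put $\mathcal{G}:=\{\varphi_n\}_{n\in\N}\subseteq\rew^\star$.

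The crux is the following approximation claim: for every $\varphi\in\dom z_\ell$ there is a sequence $\{\varphi_{n_k}\}_{k\in\N}$ in $\mathcal{G}$ with $\varphi_{n_k}\to\varphi$ in $\rew^\star$ and $z_\ell(\varphi_{n_k})\to z_\ell(\varphi)$. Indeed, $(\varphi,z_\ell(\varphi))\in\mathrm{epi}\,z_\ell$, so density furnishes a subsequence with $\varphi_{n_k}\to\varphi$ and $t_{n_k}\to z_\ell(\varphi)$; from $z_\ell(\varphi_{n_k})\le t_{n_k}$ I get $\limsup_k z_\ell(\varphi_{n_k})\le z_\ell(\varphi)$, while lower semi-continuity gives $\liminf_k z_\ell(\varphi_{n_k})\ge z_\ell(\varphi)$, so the limit holds. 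Since $\|\varphi_{n_k}-\varphi\|\to0$ forces $\varphi_{n_k}(w)\to\varphi(w)$ for each $w\in\rew$, it follows that $\varphi_{n_k}(w)-z_\ell(\varphi_{n_k})\to\varphi(w)-z_\ell(\varphi)$.

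From here the conclusion is immediate. Fixing $w\in\rew$, the claim yields $\sup_{\psi\in\mathcal{G}}\{\psi(w)-z_\ell(\psi)\}\ge\varphi(w)-z_\ell(\varphi)$ for every $\varphi\in\dom z_\ell$; as $\varphi(w)-z_\ell(\varphi)=-\infty$ when $\varphi\notin\dom z_\ell$, taking the supremum over all $\varphi\in\rew^\star$ gives $\sup_{\psi\in\mathcal{G}}\{\psi(w)-z_\ell(\psi)\}\ge I_\ell(w)$, and the opposite inequality is trivial because $\mathcal{G}\subseteq\rew^\star$. The one point that must be stated with care — and the only place a hypothesis really bites — is the passage from density in $\mathrm{epi}\,z_\ell$ to the two-sided limit $z_\ell(\varphi_{n_k})\to z_\ell(\varphi)$: density alone bounds $z_\ell$ only from above along the approximating sequence, and it is precisely lower semi-continuity of $z_\ell$ (inherited from $z_o$) that supplies the matching lower bound.
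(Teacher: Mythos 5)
Your proof is correct and follows essentially the same route as the paper: both arguments exploit separability of $\rew^\star\times\Rl$ to produce a countable family along which the pairs $(\varphi,z_\ell(\varphi))$, $\varphi\in\dom z_\ell$, can be approximated in the product metric, and then transfer the supremum. The only cosmetic difference is that you take a dense subset of the epigraph and invoke lower semi-continuity of $z_\ell$ for the two-sided limit (only the upper bound $z_\ell(\varphi_{n_k})\le t_{n_k}$ is actually needed for the inequality $\sup_{\psi\in\mathcal{G}}\{\psi(w)-z_\ell(\psi)\}\ge I_\ell(w)$), whereas the paper builds near-minimizers over a countable dense grid in $\rew^\star\times\mathbb{Q}$ without using lower semi-continuity.
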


\begin{proof}[Proof of Lemma \ref{lem:Legendre_sep}]
By hypothesis, there exists a countable dense subset $\mathcal{S}^\star$ of $\rew^\star$. Denote by $\mathbb{Q}$ the collection of rational numbers and, for $\vartheta\in\mathcal{S}^\star$ and $\alpha\in\mathbb{Q}$, define
\begin{equation*}
\delta_{\vartheta,\alpha}:=\inf_{\varphi\in\scriptsize{\dom}z_\ell}\max\Big\{\|\vartheta-\varphi\|,|\alpha-z_\ell(\varphi)|\Big\},
\end{equation*}
where $\dom z_\ell:=\{\varphi\in\rew^\star:z_\ell(\varphi)<+\infty\}$. For $n\in\N$, pick $g_{\vartheta,\alpha,n}\in\dom z_\ell$ in such a way that $\max\{\|\vartheta-g_{\vartheta,\alpha,n}\|,|\alpha-z_\ell(g_{\vartheta,\alpha,n})|\}\le \delta_{\vartheta,\alpha}+\frac{1}{n}$. The countable family $\mathcal{G}:=\{g_{\vartheta,\alpha,n}\}_{\vartheta\in\mathcal{S}^\star,\alpha\in\mathbb{Q},n\in\N}$ has the following useful property: for any $\varphi\in\dom z_\ell$ and $\epsilon>0$ there exists a $g\in\mathcal{G}$ such that
\begin{equation*}
\max\{\|\varphi-g\|,|z_\ell(\varphi)-z_\ell(g)|\}<3\epsilon.
\end{equation*}
In fact, given $\varphi\in\dom z_\ell$ and $\epsilon>0$, there exists a $\vartheta\in\mathcal{S}^\star$ such that $\|\varphi-\vartheta\|<\epsilon$ and $\alpha\in\mathbb{Q}$ such that $|z_\ell(\varphi)-\alpha|<\epsilon$. It follows that $\delta_{\vartheta,\alpha}<\epsilon$. Then, for $n\in\N$ satisfying
$\frac{1}{n}<\epsilon$, we find
\begin{align}
\nonumber
\max\Big\{\|\varphi-g_{\vartheta,\alpha,n}\|,|z_\ell(\varphi)-z_\ell(g_{\vartheta,\alpha,n})|\Big\}
&\le \max\Big\{\|\varphi-\vartheta\|,|z_\ell(\varphi)-\alpha)|\Big\}\\
\nonumber
&+\max\Big\{\|\vartheta-g_{\vartheta,\alpha,n}\|,|\alpha-z_\ell(g_{\vartheta,\alpha,n})|\Big\}\\
\nonumber
&\le\max\Big\{\|\varphi-\vartheta\|,|z_\ell(\varphi)-\alpha)|\Big\}+\delta_{\vartheta,\alpha}+\frac{1}{n}<3\epsilon.
\end{align}

The family $\mathcal{G}$ is the desired countable subset of $\rew^\star$. Indeed, for $w\in\rew$,
\begin{equation}
I_\ell(w)=\sup_{\varphi\in\scriptsize{\dom}z_\ell}\big\{\varphi(w)-z_\ell(\varphi)\big\}
\le\sup_{\varphi\in\mathcal{G}}\big\{\varphi(w)-z_\ell(\varphi)\big\},
\label{lem:Legendre_sep_1}
\end{equation}
and hence $I_\ell(w)=\sup_{\varphi\in\mathcal{G}}\{\varphi(w)-z_\ell(\varphi)\}$. To prove (\ref{lem:Legendre_sep_1}), we note that, given $w\in\rew$ and real numbers $\lambda\le I_\ell(w)$ and $\epsilon>0$, there exists a $\varphi\in\dom z_\ell$ such that $\lambda\le \varphi(w)-z_\ell(\varphi)+\epsilon$. Hence there exists a $g\in\mathcal{G}$ such that $\max\{\|\varphi-g\|,|z_\ell(\varphi)-z_\ell(g)|\}<3\epsilon$, as seen above. It follows that
\begin{align}
\nonumber
\lambda\le \varphi(w)-z_\ell(\varphi)+\epsilon 
&\le g(w)-z_\ell(g)+\|\varphi-g\|\|w\|+|z_\ell(\varphi)-z_\ell(g)|+\epsilon\\
\nonumber
&\le g(w)-z_\ell(g)+3\epsilon\|w\|+4\epsilon\\
\nonumber
&\le \sup_{\varphi\in\mathcal{G}}\big\{\varphi(w)-z_\ell(\varphi)\big\}+3\epsilon\|w\|+4\epsilon.
\end{align}
The arbitrariness of $\lambda$ and $\epsilon$ demonstrates (\ref{lem:Legendre_sep_1}).
\end{proof}

We are now in a position to deduce the large deviation upper bound for compact sets, obtaining the following proposition, which is
$(ii)$ of Theorem \ref{WLDPfree} for $\ell>-\infty$.

\begin{proposition}
\label{prop:WLDP_free}
Suppose that $\ell>-\infty$ and $\rew^\star$ is separable. Then $\pae$ the family $\{\nu_{\omega,t}\}_{t \in \N}$ satisfies the weak
LDP with rate function $I_\ell$.
\end{proposition}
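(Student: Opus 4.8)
The plan is to obtain the weak LDP by pairing the large deviation lower bound for open sets, already proved $\pae$ in Lemma~\ref{lem:lower_bound_free}, with a large deviation upper bound for compact sets derived from the cumulant generating function of $\nu_{\omega,t}$ through an exponential Chebyshev inequality. Since Lemma~\ref{lem:lower_bound_free} requires no separability, the role of the hypothesis that $\rew^\star$ is separable is confined to the upper bound, where it is needed to make the estimate hold uniformly over a single $\prob$-full set of disorders.

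First I would apply Lemma~\ref{lem:Legendre_sep} to fix a countable set $\mathcal{G}\subset\dom z_\ell$ with $I_\ell(w)=\sup_{\varphi\in\mathcal{G}}\{\varphi(w)-z_\ell(\varphi)\}$ for every $w\in\rew$. Because $\mathcal{G}$ is countable, Lemmas~\ref{lem:CGF_free} and~\ref{lem:lower_bound_free} together produce an event $\Omega_o\in\mathcal{F}$ with $\prob[\Omega_o]=1$ on which, for every $\omega\in\Omega_o$: $\lim_{t\uparrow\infty}\frac{1}{t}\log\int_\rew\ee^{t\varphi(w)}\nu_{\omega,t}(\dd w)=z_\ell(\varphi)$ for all $\varphi\in\mathcal{G}$ (the limit being finite since $\mathcal{G}\subset\dom z_\ell$), and $\liminf_{t\uparrow\infty}\frac{1}{t}\log\nu_{\omega,t}(G)\ge-\inf_{w\in G}I_\ell(w)$ for all open $G\subseteq\rew$. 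I would then fix $\omega\in\Omega_o$ and establish the upper bound for this $\omega$.

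For the upper bound, given $w\in\rew$, $\delta>0$, and $\varphi\in\mathcal{G}$, the inequality $\varphi(w')\ge\varphi(w)-\|\varphi\|\delta$ for $w'\in B_{w,\delta}$ yields the exponential Chebyshev estimate
\begin{equation*}
\nu_{\omega,t}(B_{w,\delta})\le\ee^{-t\varphi(w)+t\|\varphi\|\delta}\int_\rew\ee^{t\varphi(w')}\nu_{\omega,t}(\dd w')\qquad(t\in\N),
\end{equation*}
whence $\limsup_{t\uparrow\infty}\frac{1}{t}\log\nu_{\omega,t}(B_{w,\delta})\le-\varphi(w)+\|\varphi\|\delta+z_\ell(\varphi)$. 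Since this holds for every $\delta>0$, sending $\delta\downarrow0$ and then taking the infimum over $\varphi\in\mathcal{G}$ gives $\inf_{\delta>0}\{\limsup_{t\uparrow\infty}\frac{1}{t}\log\nu_{\omega,t}(B_{w,\delta})\}\le-\sup_{\varphi\in\mathcal{G}}\{\varphi(w)-z_\ell(\varphi)\}=-I_\ell(w)$ for all $w\in\rew$. From here the large deviation upper bound for compact sets follows by the standard finite-subcover argument used already in the proof of Proposition~\ref{prop:ldp_Jo}: for a compact $K\subset\rew$ and a real number $\lambda<\inf_{w\in K}I_\ell(w)$, choose for each $w\in K$ a radius $\delta_w>0$ with $\limsup_{t\uparrow\infty}\frac{1}{t}\log\nu_{\omega,t}(B_{w,\delta_w})\le-\lambda$, extract $w_1,\ldots,w_n\in K$ with $K\subseteq\cup_{i=1}^nB_{w_i,\delta_{w_i}}$, bound $\nu_{\omega,t}(K)\le\sum_{i=1}^n\nu_{\omega,t}(B_{w_i,\delta_{w_i}})$, and let $\lambda\uparrow\inf_{w\in K}I_\ell(w)$. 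Combined with the lower bound inherited from $\Omega_o$, and noting that $I_\ell$ is lower semi-continuous and proper as the Legendre transform of the proper convex lower semi-continuous function $z_\ell$ (recall $z_\ell(0)=\max\{z(0),\ell\}$ is finite), this establishes the weak LDP with rate function $I_\ell$.

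The step I expect to be the crux — and the one that forces the separability hypothesis — is precisely the reduction carried out in the second paragraph. The Chebyshev estimate of the third paragraph is effective only once the cumulant limit of Lemma~\ref{lem:CGF_free} is simultaneously available, on one $\prob$-full set, for enough linear functionals to recover $I_\ell$; separability of $\rew^\star$, via Lemma~\ref{lem:Legendre_sep}, reduces ``enough linear functionals'' to the countable family $\mathcal{G}$, so that the countable intersection of the almost sure sets from Lemma~\ref{lem:CGF_free} still has full probability. Without it, the $\varphi$-by-$\varphi$ almost sure statement cannot in general be promoted to the uniform one, and the upper bound for compact sets breaks down.
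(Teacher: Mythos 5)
Your proposal is correct and follows essentially the same route as the paper's proof: reduce to the countable family $\mathcal{G}$ of Lemma \ref{lem:Legendre_sep}, intersect the almost sure sets from Lemma \ref{lem:CGF_free} over $\mathcal{G}$, apply the exponential Chebyshev bound on small balls, and conclude by the finite-subcover argument from Proposition \ref{prop:ldp_Jo}. Your closing remarks on why separability of $\rew^\star$ is exactly what makes the countable reduction possible also match the paper's stated motivation.
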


\begin{proof}[Proof of Proposition \ref{prop:WLDP_free}] 
In view of Lemma \ref{lem:lower_bound_free}, it remains to verify the quenched large deviation upper bound for compact sets. Let $\mathcal{G}\subset\rew^\star$ be the countable set in Lemma \ref{lem:Legendre_sep}. By Lemma \ref{lem:CGF_free}, there exists a set $\Omega_o\in\mathcal{F}$ with $\prob[\Omega_o]=1$ such that $\lim_{t\uparrow\infty}\frac{1}{t}\log\int_\rew \ee^{tg(v)}\nu_{\omega,t}(\dd v)=z_\ell(g)$ for all $\omega\in\Omega_o$ and $g\in\mathcal{G}$. Pick $\omega\in\Omega_o$. Below we show that
\begin{equation}
\inf_{\delta>0}\bigg\{\limsup_{t\uparrow\infty}\frac{1}{t}\log\nu_{\omega,t}(B_{w,\delta})\bigg\}\le z_\ell(g)-g(w)
\label{prop:WLDP_free_1}
\end{equation}
for all $w\in\rew$ and $g\in\mathcal{G}$. This gives $\inf_{\delta>0}\{\limsup_{t\uparrow\infty}\frac{1}{t}\log\nu_{\omega,t}(B_{w,\delta})\}\le-I_\ell(w)$ for every $w$ by Lemma \ref{lem:Legendre_sep}, which in turn demonstrates the large deviation upper bound for compact sets as in the proof of Proposition \ref{prop:ldp_Jo}.

Given $w\in\rew$, $g\in\mathcal{G}$, and $\delta>0$, the bound $g(v)-g(w)\ge -\|g\|\|v-w\|\ge-\|g\|\delta$ for $v\in B_{w,\delta}$ implies, for $t\in\N$,
\begin{equation*}
\nu_{\omega,t}(B_{w,\delta})= \int_{B_{w,\delta}}\nu_{\omega,t}(\dd v)\le \int_\rew \ee^{tg(v)-tg(w)+t\|g\|\delta}\nu_{\omega,t}(\dd v).
\end{equation*}
Taking logarithm, dividing by $t$, letting first $t\uparrow\infty$ and afterwards $\delta\downarrow 0$, we find (\ref{prop:WLDP_free_1}).
\end{proof}


\subsection{The full LDP}
\label{sec:FLDPfree} 

One of the hypothesis of Corollary \ref{FLDPfree} is that $\rew$ is finite-dimensional. In particular, this implies that $\rew^\star$ is separable, so that $\pae$ the family of measures $\{\nu_{\omega,t}\}_{t\in\N}$ satisfies the weak LDP with rate function $I_\ell$ when either $\ell=-\infty$ or $\ell>-\infty$. Thus, similarly to Corollary \ref{FLDPc}, the full LDP stated by Corollary \ref{FLDPfree} is a consequence of the following quenched
exponential tightness.

\begin{lemma}
\label{lem:exp_tight_free}
Under the hypotheses of Corollary \ref{FLDPc}, $\pae$ the family of measures $\{\nu_{\omega,t}\}_{t\in\N}$ is exponentially tight.
\end{lemma}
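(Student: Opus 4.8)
The plan is to rerun the argument used for $\{\mu_{\omega,t}\}_{t\in\N}$ in Lemma~\ref{lem:exp_tight}, transporting the estimate to $\{\nu_{\omega,t}\}_{t\in\N}$ through the representation \eqref{mu2nu}. Fix $\rho>0$ and let $\xi>0$, $M\ge 0$ be as in Corollary~\ref{FLDPc}. Starting from \eqref{mu2nu} and bounding the tail probabilities by $P_{f^\tau\omega}[S_1>t-\tau]\le 1$, one gets, for every $t\in\N$,
\begin{equation*}
\nu_{\omega,t}(B_{0,\rho}^c)\le\sum_{\tau=0}^t E_\omega\bigg[\mathds{1}_{\big\{\|W_\tau\|>\rho t,\,\tau\in\mathcal{T}\big\}}\ee^{H_{\omega,\tau}}\bigg].
\end{equation*}
The crucial feature to preserve is that the large-deviation threshold is $\rho t$, \emph{not} $\rho\tau$: this is what forces a decay rate linear in $t$ (rather than in $\tau$), and is precisely why one cannot simply quote Lemma~\ref{lem:exp_tight} as a black box after writing $\{\|W_\tau\|>\rho t\}\subseteq\{\|W_\tau\|>\rho\tau\}$.

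For each fixed $\tau\in\{0,\dots,t\}$ I would apply the Chernoff-type computation of Lemma~\ref{lem:exp_tight} to the $\tau$-th summand. Since $\|W_\tau\|>\rho t$ entails $\sum_{i=1}^{N_\tau}\|X_i\|\ge\rho t$, the bounds \eqref{H_bound}, \eqref{lem:exp_tight_1} and \eqref{lem:exp_tight_2} (all read with $\tau$ in place of $t$, except for the threshold $\rho t$) give
\begin{equation*}
E_\omega\bigg[\mathds{1}_{\big\{\|W_\tau\|>\rho t,\,\tau\in\mathcal{T}\big\}}\ee^{H_{\omega,\tau}}\bigg]
\le\ee^{-\xi\rho t+(\eta+M)\tau+\sum_{j=0}^{\tau-1}\big(a(f^j\omega)+b(f^j\omega)\big)},
\end{equation*}
where $a(\omega):=\sup_{s\in\N}\max\{0,v_\omega(s)-\eta s\}$ and $b(\omega):=\sup_{s\in\N}\max\{0,\log\int_\rew\ee^{\xi\|x\|-Ms}\lambda_\omega(\dd x|s)\}$, both $\prob$-integrable by Assumption~\ref{ass3} and by the hypothesis of Corollary~\ref{FLDPc}. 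Because $a,b\ge 0$ and $\tau\le t$, the Birkhoff partial sums $\sum_{j=0}^{\tau-1}$ are dominated by $\sum_{j=0}^{t-1}$ and $\eta\tau+M\tau\le(\eta+M)t$, so the last display is bounded, \emph{uniformly in} $\tau\in\{0,\dots,t\}$, by $\ee^{-\xi\rho t+(\eta+M)t+\sum_{j=0}^{t-1}(a(f^j\omega)+b(f^j\omega))}$. Summing the $t+1$ terms,
\begin{equation*}
\nu_{\omega,t}(B_{0,\rho}^c)\le(t+1)\,\ee^{-\xi\rho t+(\eta+M)t+\sum_{j=0}^{t-1}\big(a(f^j\omega)+b(f^j\omega)\big)}.
\end{equation*}

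Taking $\frac1t\log$, letting $t\uparrow\infty$, and invoking Birkhoff's ergodic theorem, we obtain $\limsup_{t\uparrow\infty}\frac1t\log\nu_{\omega,t}(B_{0,\rho}^c)\le-\xi\rho+\eta+M+\Ex[a]+\Ex[b]$ $\pae$. Given $\lambda>0$, I would then choose $\rho$ so large that the right-hand side is $<-\lambda$ and take $K$ to be the closure of $B_{0,\rho}$, which is compact since $\rew$ is finite-dimensional; this yields the claimed quenched exponential tightness. I do not expect a genuine obstacle here: the only point needing care is the uniformity in $\tau\in\{0,\dots,t\}$, handled exactly as above by keeping the threshold $\rho t$ in the Chernoff estimate and exploiting the monotonicity in the upper limit of the Birkhoff sums of the nonnegative functions $a$ and $b$; the slowly varying factors $P_{f^\tau\omega}[S_1>t-\tau]$ are harmless because they are at most $1$, so condition \eqref{Ptailcond} is not even needed for this lemma.
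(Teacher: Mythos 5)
Your proposal is correct and follows essentially the same route as the paper: the representation \eqref{mu2nu} with $P_{f^\tau\omega}[S_1>t-\tau]\le 1$, a Chernoff bound keeping the threshold at $\rho t$, and the estimates \eqref{H_bound} and \eqref{lem:exp_tight_2} made uniform in $\tau\le t$ via the non-negativity of the Birkhoff summands. The only (immaterial) difference is that the paper factors $\ee^{-\xi\rho t}$ out of the whole sum rather than treating each summand separately.
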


\begin{proof}[Proof of Lemma \ref{lem:exp_tight_free}]
Pick a number $\rho>0$, and let $\xi>0$ be as in Corollary \ref{FLDPc}. For $t\in\N$, a Chernoff-type bound, together with the identity
in (\ref{mu2nu}) and the bound in (\ref{H_bound}), gives
\begin{align}
\nonumber 
\nu_{\omega,t}(B_{0,\rho}^c)
&=\sum_{\tau=0}^tE_\omega\bigg[\mathds{1}_{\big\{\frac{W_\tau}{t}\in B_{0,\rho}^c,\,\tau\in\mathcal{T}\big\}} 
\ee^{H_{\omega,\tau}}\bigg] P_{f^\tau\omega}[S_1>t-\tau]\\
\nonumber
&\le \sum_{\tau=0}^tE_\omega\Big[\mathds{1}_{\{\sum_{i=1}^{N_\tau}\|X_i\|\ge \rho t,\,\tau\in\mathcal{T}\}} \ee^{H_{\omega,\tau}}\Big]\\
\nonumber
&\le \ee^{-\xi\rho t}\sum_{\tau=0}^tE_\omega\Big[\mathds{1}_{\{\tau\in\mathcal{T}\}} \ee^{\xi\sum_{i=1}^{N_\tau}\|X_i\|+H_{\omega,\tau}}\Big]\\
&\le\ee^{\sum_{\tau=0}^{t-1}\sup_{s\in\N}\max\{0,v_{f^\tau\omega}(s)-\eta s\}+\eta t-\xi\rho t}
\sum_{\tau=0}^tE_\omega\Big[\mathds{1}_{\{\tau\in\mathcal{T}\}}\ee^{\xi \sum_{i=1}^{N_\tau}\|X_i\|}\Big].
\label{lem:exp_tight_free_1}
\end{align}
On the other hand, with $M$ as in Corollary \ref{FLDPc}, (\ref{lem:exp_tight_2}) implies
\begin{equation}
\sum_{\tau=0}^tE_\omega\Big[\mathds{1}_{\{\tau\in\mathcal{T}\}}\ee^{\xi \sum_{i=1}^{N_\tau}\|X_i\|}\Big]
\le (t+1)\,\ee^{Mt+\sum_{\tau=0}^{t-1}\sup_{s\in\N}\max\big\{0,\log\int_\rew \ee^{\xi\|x\|-Ms}
\lambda_{f^\tau\omega}(\dd x|s)\big\}}.
\label{lem:exp_tight_free_2}
\end{equation}
Combining (\ref{lem:exp_tight_free_1}) with (\ref{lem:exp_tight_free_2}), taking the logarithm, dividing by $t$, and letting $t\uparrow\infty$, we find
\begin{align}
\nonumber
\limsup_{t\uparrow\infty}\frac{1}{t}\log\nu_{\omega,t}(B_{0,\rho}^c)
&\le\Ex\Big[\sup_{s\in\N}\max\big\{0,v_{\bdot}(s)-\eta s\big\}\Big]+\eta-\xi\rho\\
&\qquad +M+\Ex\bigg[\sup_{s\in\N}\max\bigg\{0,\log\int_\rew\ee^{\xi\|x\|-Ms}\lambda_{\bdot}(\dd x|s)\bigg\}\bigg] \qquad \pae
\label{lem:exp_tight_free_3}
\end{align}
thanks to Birkhoff's ergodic theorem. Since $\Ex[\sup_{s\in\N}\max\{0, v_{\bdot}(s)-\eta s\}]<+\infty$ by Assumption \ref{ass3} and $\Ex[\sup_{s\in\N}\max\{0,\log\int_\rew\ee^{\xi\|x\|-Ms}\lambda_{\bdot}(\dd x|s)\}]<+\infty$ by hypothesis, given $\lambda>0$, the lemma follows by choosing $\rho$ so large that the r.h.s.\ of (\ref{lem:exp_tight_free_3}) is smaller than $\lambda$ and $K$ is equal to the closure of $B_{0,\rho}$.
\end{proof}



\end{document}